\theoremstyle{plain}
\newtheorem{theorem}{{Theorem}}[section] 
\newtheorem*{theorem*}{{Theorem}}
\newtheorem{proposition}[theorem]{Proposition}
\newtheorem*{proposition*}{Proposition}
\newtheorem{lemma}[theorem]{Lemma}
\newtheorem*{lemma*}{Lemma}
\theoremstyle{definition}
\newtheorem{definition}[theorem]{Definition}
\newtheorem*{definition*}{Definition}
\theoremstyle{remark}
\newtheorem*{remark*}{Remark}
\newtheorem{remark}[theorem]{Remark}
\newcommand{\singl}[1]{\left\{ #1 \right\}}		
\newcommand{\R}{\mathbb{R}}
\newcommand{\C}{\mathbb{C}}
\newcommand{\N}{\mathbb{N}}
\newcommand{\tqe}{:}
\renewcommand{\leq}{\leqslant}
\renewcommand{\geq}{\geqslant}
\renewcommand{\bar}[1]{\overline{#1}}
\newcommand{\inv}{^{-1}}
\newcommand {\limt}[2]{\xrightarrow[#1 \to #2]{}}
\newcommand{\abs}[1]{\left\vert #1\right\vert} 
\newcommand{\nr}[1]{\left\Vert #1\right\Vert}         
\newcommand{\innp}[2]{\left< #1 , #2 \right>}         
\newcommand{\Dom}{\Dc}			
\DeclareMathOperator{\Op}{Op}		
\newcommand{\Opw}{\Op_h^w}		
\newcommand{\pppg}[1] {\left< #1 \right>} 	
\newcommand{\bigo}[2]{\mathop{O}\limits_{#1 \to #2}}
\newcommand{\restr}[2]{\left.#1\right|_{#2}}         
\renewcommand{\Re}{\mathop{\rm{Re}}\nolimits}        
\renewcommand{\Im}{\mathop{\rm{Im}}\nolimits}        
\newcommand{\Ran}{\mathop{\rm{Ran}}\nolimits} 
\DeclareMathOperator{\supp}{supp}                    
\newcommand{\1}[1]{\ensuremath{\mathbbm{1}_{#1}}}
\renewcommand{\a}{\alpha}
\newcommand{\g}{\gamma}
\newcommand{\G}{\Gamma}
\renewcommand{\d}{\delta}
\newcommand{\D}{\Delta}
\newcommand{\e}{\varepsilon}
\newcommand{\z}{\zeta}
\renewcommand{\th}{\theta}
\newcommand{\Th}{\Theta}
\renewcommand{\l}{\lambda}
\newcommand{\m}{\mu}
\newcommand{\n}{\nu}
\newcommand{\x}{\xi}
\newcommand{\s}{\sigma}
\renewcommand{\t}{\tau}
\newcommand{\f}{\varphi}
\newcommand{\vf}{\phi}
\newcommand{\h}{\chi}
\newcommand{\p}{\psi}
\renewcommand{\O}{\Omega}
\newcommand{\Dc}{{\mathcal D}}
\newcommand{\Hc}{{\mathcal H}}
\newcommand{\Kc}{{\mathcal K}}
\newcommand{\Lc}{{\mathcal L}}
\newcommand{\Oc}{{\mathcal O}}
\newcommand{\Uc}{{\mathcal U}}
\newcommand{\Vc}{{\mathcal V}}
\newcommand{\gzlde}{{G_{z,h}(\e)}}
\newcommand{\gzlue}{{G_{z,h}'(\e)}}
\newcommand{\gzle}{{G_{z,h}(\e)}}
\newcommand{\kzle}{{G^1 _{z,h}(\e)}}
\newcommand{\alinv}{\pppg {A_h} \inv}   
\newcommand{\alinvs}{\pppg {A_h} ^{-s}} 
\newcommand{\fzle}{{F_{z,h}(\e)}}
\newcommand{\tfzle}{{\tilde F_{z,h}(\e)}}
\newcommand{\rle}{{Q_h(\e)}}
\newcommand{\hol}{{H_0^h}}
\newcommand{\hun}{{H_1^h}}
\begin{document}

\title
	{Limiting absorption principle for the dissipative Helmholtz equation}
\author{Julien Royer}
\address{Laboratoire de mathématiques Jean Leray\\
UMR CNRS 6629\\
Université de Nantes \\
44322 Nantes Cedex 3,  France
}

\email{julien.royer@univ-nantes.fr}

\subjclass[2000]{35J10, 47A55, 47B44, 47G30}
\keywords{Dissipative operators, Helmholtz equation, limiting absorption principle, Mourre's commutator method, Selfadjoint dilations}
\maketitle

\begin{abstract}
Adapting Mourre's commutator method to the dissipative setting, we prove a limiting absorption principle for a class of abstract dissipative operators. A consequence is the resolvent estimates for the high frequency Helmholtz equation when trapped trajectories meet the set where the imaginary part of the potential is non-zero. We also give the resolvent estimates in Besov spaces.
\end{abstract}

\section{Introduction}

We consider the following Helmholtz equation:
\begin{equation} \label{helmholtz}
\D A(x) + k_0^2 (1-N(x)) A(x) + ik_0  a(x) A(x) = A_0
\end{equation}
This equation modelizes accurately the propagation of the electromagnetic field of a laser in material medium. Here $k_0$ is the wave number of the laser in the vacuum, $N$ and $a$ are smooth nonnegative functions representing the electronic density of material medium and the absorption coefficient of the laser energy by material medium, and $A_0$ is an incident known excitation (see \cite{benamou-al-03}). Note that the laser wave cannot propagate in regions where $N(x) \geq 1$, so it is assumed that $0 \leq N(x) < 1$. An important application of this problem is the design of very high power laser device such as the Laser Méga-Joule in France or the National Ignition Facility in the USA.

The oscillatory behaviour of the solution makes numerical resolution very expensive. Fortunately, the wave length of the laser in the vacuum $2\pi k_0\inv$ is much smaller than the scale of variation of $N$. It is therefore relevant to consider the high frequency limit $k_0 \to \infty$. The simplified model with a constant absorption coefficient has been studied in many papers. This coefficient is either assumed to be positive (see \cite{benamou-al-02, benamou-al-03, wangz06}) in order to be regularizing, or only nonnegative (\cite{wang07}), in which case the outgoing (or incoming) solution has to be chosen for $A$, but in both cases the non-symmetric part of the equation is in the spectral parameter, and what remains is a selfadjoint Schrödinger operator. More precisely we are led to study an equation of the form:
\[
(-h^2 \D + V(x) - (E+i\m_h)) u_h = S_h
\]
where $h \sim k_0\inv$ is a small parameter. 

When the absorption is not assumed to be constant, it has to be in the operator itself and the selfadjoint theory no longer applies. However, the anti-adjoint part is small compared to the selfadjoint Schrödinger operator, so by perturbation theory we can see that some results concerning the selfadjoint part still apply to the perturbed operator.

In this paper we study the limiting absorption principle for the following dissipative Schrödinger operator:
\[
H_h = -h^2 \D + V_1(x) -i \n(h) V_2(x)
\]
on $L^2(\R^n)$, where $V_1$ is a real function, $V_2$ is nonnegative and $\nu : ]0,1] \to ]0,1]$. Note that $\nu(h) = h$ for the Helmholtz equation.\\

In the first section, we prove a uniform and dissipative version of the abstract commutator method introduced by E. Mourre in \cite{mourre81} and developped in many papers (see for instance \cite{perryss81,jensenmp84,jensen85,derezinski-jaksic-01, georgescugm04} and references therein). In particular we see that the anti-adjoint part with fixed sign allows us to weaken the Mourre condition we need to prove uniform estimates and limiting absorption principle on the upper half-plane. On the contrary, the result is not valid on the other side of the real axis.

In section 2 we apply this abstract result to the dissipative Schrödinger operator in the semi-classical setting, following the idea of C. Gérard and A. Martinez (\cite{gerardm88}, see also  \cite{robertt87} for the semi-classical limiting absorption principle). In particular we get uniform estimates of the resolvent $(H_h-z)\inv$ for $h$ small enough, $\Im z > 0$ and $\Re z$ close to $E >0$. In the selfadjoint case, the result is true if and only if $E$ is a non-trapping energy, that is if there is no  bounded classical trajectory for the hamiltonian flow associated to the symbol $p(x,\x) = \x^2 + V_1(x)$ of $H_h$. In the dissipative case, the weakened Mourre condition gives a weaker condition on the classical behaviour: we only have to assume that any bounded trajectory of energy $E$ meets the set where $V_2 > 0$. Note that it is consistent with the selfadjoint result. Section 3 is devoted to prove that this condition is necessary (when $\nu(h) = h$, which is the case we are mainly interested in). To this purpose we use a selfadjoint dilation of the Schrödinger operator and we prove a dissipative Egorov theorem.

Finally, we show that the estimates we have proved in weighted spaces can be extended to estimates in Besov spaces, first for the abstract setting of section 2 and then for the Schrödinger operator.

\section{Commutator method for a family of dissipative operators}

We first recall that an operator $H$ of domain $\Dom(H)$ in the Hilbert space $\Hc$ is said to be dissipative if:
\[
\forall \f \in \Dom (H), \quad \Im \innp {H\f} \f \leq 0
\]

\subsection{Uniform conjugate operators}  \label{conjugate}

Let $(H_h)_{h\in]0,1]}$ be a family of dissipative operators on $\Hc$. We assume that $H_h$ is of the form $H_h = \hol -iV_h$ where $\hol$ is selfadjoint on a domain $\Dom_H$ independant of $h$ and $V_h$ is selfadjoint, nonnegative and uniformly $\hol$-bounded with relative bound less than 1:
\begin{equation}  \label{rel-bound}
\exists a \in[0,1[, \exists b\in\R, \forall h \in ]0,1], \forall \f \in \Dom_H, \quad \nr{V_h \f} \leq a \nr{\hol \f} + b \nr \f
\end{equation}

For any $h \in ]0,1]$ and $\f \in \Dom_H$, write: $\nr{\f}_{\G_h} = \nr {\hol \f} + \nr \f$.
Consider now a family $(A_h)_{h\in]0,1]}$ of selfadjoint operators on a domain $\Dom_A$ independant of $h$, $J\subset \R$ and $(\a_h)_{h\in]0,1]}$ with $\a_h \in ]0,1]$.

\begin{definition} \label{def-conj}
The family $(A_h)$ is said to be uniformly conjugate to $(H_h)$ on $J$ with bounds $(\a_h)$ if the following conditions are satisfied:
\begin{enumerate}[($a$)]
\item
For every $h\in]0,1]$, $\Dom_H\cap \Dom_A$ is a core for $\hol$.
\item
$e^{itA_h}$ maps $\Dom_H$ into itself for any $t\in\R$, $h\in]0,1]$, and:
\begin{equation}
\forall \f \in \Dom_H,\quad \sup_{h\in ]0,1],\abs t \leq 1} \nr{e^{itA_h} \f}_{\G_h} < \infty
\end{equation}
\item
For every $h\in ]0,1]$, the quadratic forms $i[\hol,A_h]$ and $i[V_h,A_h]$ defined on $\Dom_H \cap \Dom_A$ are bounded from below and closable. Moreover, if we denote by $i[\hol,A_h]^0$ and $i[V_h,A_h]^0$ their closures, then $\Dom_H \subset \Dom(i[\hol,A_h]^0)\cap \Dom(i[V_h,A_h]^0)$ and there exists $c\geq 0$ such that for $h\in ]0,1]$ and $\f\in\Dom_H$ we have:
\begin{equation}
\nr{i[\hol,A_h]^0 \f}+\nr{i[V_h,A_h]^0 \f}  \leq c \sqrt {\a_h} \nr{\f}_{\G_h}
\end{equation}
\item
There exists $c \geq 0$ such that for all $\f,\p \in \Dom_H\cap \Dom_A$:
\begin{equation}
\abs{ \innp{i[\hol,A_h]^0 \f}{A_h \p} - \innp{ A_h\f}{ i[\hol,A_h]^0\p} } \leq c \, \a_h \nr{\f} _{\G_h}\nr{\p} _{\G_h}
\end{equation}
and similar estimates hold for the forms $[i[V_h,A_h],A_h]$ and $[V_h,A_h]$.
\item
There exists $C_V \geq 0$ such that for all $h\in ]0,1]$:
\begin{equation}  \label{estim-mourre}
\1 J (\hol) \left( i[\hol,A_h]^0 + C_V V_h \right) \, \1 J (\hol)  \geq \a_h \1 J (\hol)
\end{equation}
where $\1 J$ denotes the characteristic function of $J$ and hence $(\1 I (\hol))_{I\subset \R}$ is the set of spectral projections for the selfadjoint operator $\hol$.
\end{enumerate}
\end{definition}

Let $\C_+ = \{ z\in\C \tqe \Im z > 0\}$ and for $J\subset \R$: $\C_{J,+} = \{ z \in \C_+ \tqe \Re z \in J\}$.

\subsection{Abstract limiting absorption principle}

We first prove a version of the quadratic estimates (see \cite[prop. II.5]{mourre81}) we need in our dissipative case:

\begin{proposition}   \label{propII.5}
Let $T = T_R -iT_I$ be a dissipative operator on $\Hc$ with $T_R$ selfadjoint and $T_I$ nonnegative, selfadjoint and $T_R$-bounded. Then for all $z\in\C_+$ the operator $(T-z)$ has a bounded inverse. Moreover if $B$ is an operator such that $B^* B \leq T_I$ and $Q$ is a bounded selfadjoint operator, then we have:
\begin{equation}
\nr{B (T-z)\inv Q} \leq  \nr{Q (T-z)\inv Q}^{\frac 12}
\end{equation}
\end{proposition}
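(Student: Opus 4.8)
The plan is to treat the two assertions in turn: the invertibility of $T-z$ for $z\in\C_+$ first, and then the quadratic estimate, which is nothing but the identity for the imaginary part of the resolvent, sandwiched between copies of $Q$.

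For the invertibility I would start from dissipativity: for $\f\in\Dom(T)$ and $z\in\C_+$,
\[
\Im\innp{(T-z)\f}{\f}=\Im\innp{T\f}{\f}-\Im z\,\nr\f^2\leq-\Im z\,\nr\f^2 ,
\]
so Cauchy--Schwarz gives $\nr{(T-z)\f}\geq\Im z\,\nr\f$; hence $T-z$ is injective with closed range. Since $T_I$ is $T_R$-bounded, $T$ is closed with $\Dom(T)=\Dom(T_R)$ and $(T-z)^*=T^*-\bar z$, where $T^*=T_R+iT_I$. The operator $-T^*$ is again dissipative, so the same inequality applied to it shows that $T^*-\bar z$ is injective as well; therefore $\Ran(T-z)=\ker\big((T-z)^*\big)^\perp=\Hc$, which yields the bounded everywhere-defined inverse, together with the bound $\nr{(T-z)^{-1}}\leq(\Im z)^{-1}$.

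For the quadratic estimate, put $R=(T-z)^{-1}$ and $\m=\Im z>0$. Both $R$ and $R^*=(T^*-\bar z)^{-1}$ map $\Hc$ into the common domain $\Dom(T_R)$, so the algebraic identity $(T-z)-(T^*-\bar z)=-2i(T_I+\m)$ can be inserted between $R^*$ and $R$:
\[
R^*-R=R^*\big[(T-z)-(T^*-\bar z)\big]R=-2i\,R^*(T_I+\m)R ,
\]
that is, $\frac1{2i}(R-R^*)=R^*(T_I+\m)R\geq R^*B^*BR=(BR)^*(BR)$, using $T_I+\m\geq T_I\geq B^*B$. (The hypothesis $B^*B\leq T_I$ also guarantees $\Dom(T_I^{1/2})\subset\Dom(B)$ with $\nr{B\f}\leq\nr{T_I^{1/2}\f}$, so that $BR$ and $BRQ$ are genuine bounded operators.) Conjugating by the bounded selfadjoint operator $Q$ and using $(QRQ)^*=QR^*Q$,
\[
(BRQ)^*(BRQ)\leq Q\,\frac{R-R^*}{2i}\,Q=\frac{QRQ-(QRQ)^*}{2i} ,
\]
whence $\nr{BRQ}^2=\nr{(BRQ)^*(BRQ)}\leq\nr{\,\tfrac1{2i}(QRQ-(QRQ)^*)\,}\leq\nr{QRQ}$, which is the claimed inequality.

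The computation itself is two lines; what needs care is the bookkeeping of domains — that $T$ is maximal dissipative with $(T-z)^*=T^*-\bar z$, that $R$ and $R^*$ really land in $\Dom(T_R)$ so that the middle identity $(T-z)-(T^*-\bar z)=-2i(T_I+\m)$ is legitimate there, and that the form inequality $B^*B\leq T_I$ upgrades to the pointwise bound $\nr{B\f}\leq\nr{T_I^{1/2}\f}$ on $\Dom(T_I^{1/2})$. I expect the (routine) verification that $T$ is closed with the expected adjoint to be the only mildly delicate point; after that it is the standard Mourre-type manipulation, with the nonnegativity of $T_I\geq B^*B$ doing the essential work.
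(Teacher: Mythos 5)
Your proof is correct and follows essentially the same route as the paper's: both establish invertibility by the lower bound $\nr{(T-z)\f}\geq\Im z\,\nr\f$ for $T-z$ and its adjoint, and both reduce the quadratic estimate to the resolvent identity $\tfrac1{2i}(R-R^*)=R^*(T_I+\Im z)R$ together with $T_I\geq B^*B$. The only cosmetic difference is that you phrase the key step as an operator/form inequality conjugated by $Q$, whereas the paper writes the same computation out in a chain of inner products with a test vector $\f$; your extra remarks on domains ($\Dom(T)=\Dom(T_R)$, $(T-z)^*=T^*-\bar z$, $B$ being $T_I^{1/2}$-bounded) are a welcome tightening but do not change the argument.
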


\begin{proof}
Since $T_R$ is closed and $T_I$ is $T_R$-bounded, $T$ is closed. For $z \in \C_+$ and $\f \in \Dom(H_0)$ we have:
\[
\begin{aligned}
\nr{(T-z)\f} \nr \f 
 \geq \abs{ \Im \innp{(T-z)\f} \f } = \innp{T_I\f}\f + \Im z \nr \f ^2
 \geq \Im z \nr \f ^2 
\end{aligned}
\]
So $(T -z)$ is injective with closed range. We similarly prove that $\nr{(T^* -\bar z)\f} \geq \Im z \nr \f$, so $\Ran(T-z)$ is dense in $\Hc$ and hence equal to $\Hc$, which proves that $(T-z)$ has a bounded inverse. Let $\f \in \Hc$. We compute:
\begin{eqnarray*}
\lefteqn{\nr{B (T-z)\inv Q \f}^2}\\
&& = \innp{B^*B (T-z)\inv Q \f}{ (T-z)\inv Q \f}\\
&& \leq  \innp{T_I (T-z)\inv Q \f}{ (T-z)\inv Q \f} 
 + \Im z   \innp{ (T-z)\inv Q\f}{ (T-z)\inv Q\f}\\
&& \leq \frac  {1} {2i} \innp{Q (T^*-\bar z)\inv [(T^* - \bar z) - (T -  z)] (T-z)\inv Q \f}{ \f}\\
&& \leq \nr{Q (T-z)\inv Q} \nr \f ^2
\end{eqnarray*}
\end{proof}

Let $\pppg \cdot$ denote the function $x \mapsto \sqrt{1 + \abs x^2}$. We can now state and prove the main theorem of this section:

\begin{theorem} \label{th-mourre}
Let $(H_h)_{h\in]0,1]}$ be a family of dissipative operators of the form $H_h = \hol -iV_h$ as in section \ref{conjugate} and $(A_h)_{h \in ]0,1]}$ a conjugate family to $(H_h)$ on the open interval $J \subset \R$ with bounds $(\a_h) _{h\in ]0,1]}$. Then for any closed subinterval $I \subset J$ and all $s > \frac 12$, there exists a constant $c \geq 0$ such that:
\begin{equation} \label{estim-unif}
\forall h \in ]0,1], \forall z \in \C_{I,+}, \quad   \nr{\alinvs (H_h-z)\inv \alinvs} \leq \frac c {\a_h}
\end{equation}
Moreover, we have for all $z,z' \in \C_{I,+}$:
\begin{equation} \label{estimzzp}
\nr{\alinvs \left( (H_h-z)\inv - (H_h-z')\inv \right) \alinvs } \leq c \, {\a_h^{-\frac {4s}{2s+1}}} \abs{z - z'}^{\frac {2s-1}{2s+1}}
\end{equation}
and for $E \in J$ the limit:
\begin{equation} \label{ablim}
\alinvs (H_h -(E +i0)) \inv \alinvs  = \lim_{\m \to 0^+} \alinvs (H_h -(E +i\m)) \inv \alinvs 
\end{equation}
exists in $\Lc(\Hc)$ and is a $\frac {2s-1}{2s+1}$-Hölder continuous function of $E$.
\end{theorem}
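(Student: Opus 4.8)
The plan is to adapt Mourre's classical argument to the dissipative setting, exploiting the sign of $V_h$ exactly as the weakened Mourre condition \eqref{estim-mourre} suggests. The standard trick is to regularize the conjugate operator: for $\e > 0$ set $A_{h}^{\e} = A_h (1 + i\e A_h)^{-1}$ (a bounded self-adjoint operator with uniform bounds in $\e$), and form the deformed operator $H_h(\e) = H_h - i\e \gzle$, where $\gzle$ is built from $i[\hol, A_h]^0 + C_V V_h$ so that $\1_J(\hol)\gzle\1_J(\hol) \geq \a_h \1_J(\hol)$. The key point is that because $V_h \geq 0$, the operator $H_h(\e)$ remains dissipative, so Proposition~\ref{propII.5} applies to it directly with $T_R = \hol - \e\,\mathrm{Re}(\text{stuff})$ and $T_I$ including both $\e V_h$ (from the original dissipation) and the positive commutator contribution — this is the step where the dissipative sign lets us get away with a weaker positivity assumption than in the self-adjoint case.

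First I would set up $F_{z,h}(\e) = \alinvs (H_h(\e) - z)\inv \alinvs$ and establish, via the differential-inequality method, that $\nr{F_{z,h}(\e)}$ satisfies $\partial_\e \nr{F_{z,h}(\e)} \lesssim \a_h^{-1} + \a_h \nr{F_{z,h}(\e)}^2$ type bounds, together with an initial bound at $\e \sim 1$ (where the full $i\e\gzle$ term gives ellipticity on the range of $\1_J(\hol)$) and a low-energy/off-$J$ part handled by the standard Mourre localization using condition~$(e)$ and the hypotheses $(c)$, $(d)$ controlling double commutators. Integrating the differential inequality from $\e = 1$ down to $\e = 0$, optimizing the rescaling in $\a_h$, yields the uniform estimate \eqref{estim-unif}; one must track constants carefully to see the clean $c/\a_h$ on the right-hand side, which requires the $\sqrt{\a_h}$ in hypothesis $(c)$ and the $\a_h$ in $(d)$ to balance against the $\a_h$ in \eqref{estim-mourre}.

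Once \eqref{estim-unif} is in hand, the Hölder estimate \eqref{estimzzp} follows by an interpolation argument: write the difference of resolvents as $(z - z')(H_h - z)\inv (H_h - z')\inv$, insert $\alinvs\pppg{A_h}^{s} \cdot \pppg{A_h}^{-s}\alinvs$-type factors and split into a region where $\pppg{A_h}$ is $\leq R$ and $\geq R$; on the first region the extra resolvent costs $\abs{z-z'}$ times a power of $R$, on the second one gains negative powers of $R$ from $\alinvs\pppg{A_h}^{s}$ being bounded by $R^{s - s'}$ for intermediate $s'$, and optimizing over $R$ produces the exponent $\tfrac{2s-1}{2s+1}$ with the stated $\a_h$ weight. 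The existence of the boundary limit \eqref{ablim} and its Hölder continuity in $E$ are then immediate consequences of \eqref{estimzzp} and completeness of $\Lc(\Hc)$, taking $z = E + i\m$, $z' = E' + i\m'$ and letting $\m, \m' \to 0^+$.

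The main obstacle I expect is the bookkeeping in the differential-inequality step: one needs $H_h(\e)$ to stay dissipative and one needs the commutator $[\gzle, A_h^\e]$ and the error terms generated when differentiating $F_{z,h}(\e)$ in $\e$ to be controlled uniformly in $h$ with exactly the right powers of $\a_h$, which is where hypotheses $(b)$–$(d)$ and the uniform $\hol$-boundedness \eqref{rel-bound} all get used simultaneously; getting the powers of $\a_h$ to come out so that the final bound is $O(\a_h^{-1})$ rather than something worse, and propagating this through the rescaling $\e \mapsto \a_h \e$, is the delicate heart of the proof. A secondary subtlety is justifying all the formal commutator manipulations on the correct cores, which is precisely why conditions $(a)$ and $(c)$ insist on $\Dom_H \cap \Dom_A$ being a core and on the closability of the commutator forms.
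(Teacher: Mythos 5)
Your overall plan is the right one and mirrors the paper's strategy in spirit: deform the operator by an $\e$ times the positive commutator from the Mourre estimate, exploit the nonnegativity of $V_h$ through the quadratic estimate of Proposition~\ref{propII.5}, and run a differential inequality in $\e$. The dissipative ``free lunch'' you identify --- that $T_I$ can absorb both $V_h$ and the positive commutator so that a weaker Mourre condition suffices --- is exactly what drives the paper's proof. However, several of the key technical choices in your sketch are either off or too vague for the argument to close, and the most serious one is structural.

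\textbf{The differential inequality does not close as written.} You propose $\partial_\e\nr{F_{z,h}(\e)} \lesssim \a_h^{-1} + \a_h\nr{F_{z,h}(\e)}^2$. A bound of the form $\abs{F'} \leq c_1 + c_2\nr{F}^2$ with $F(1)$ bounded does not propagate to $\e = 0$: the bootstrap $\nr F \leq M$ on $[\e,1]$ requires $F(1) + c_1 + c_2 M^2 \leq M$, which fails for large $M$. What saves Mourre's original argument (and what the paper uses in its cleaner Jensen--Mourre--Perry variant) is an extra factor of $\e$ (or fractional power of $\e$) in front of the bad terms, balanced against the initial singular bound $\nr{F_{z,h}(\e)} \lesssim (\a_h\e)^{-1}$. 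The paper obtains
\[
\nr{\tfrac{d}{d\e}\,\a_h F_{z,h}(\e)} \leq c + \tfrac{c}{\sqrt\e}\nr{\a_h F_{z,h}(\e)} + c\,\e^{s-3/2}\nr{\a_h F_{z,h}(\e)}^{1/2},
\]
and this precise structure (linear and square-root in $\nr F$, with explicit $\e$-weights) is what Lemma~3.3 of Jensen--Mourre--Perry is designed to integrate. Crucially, the square-root term comes from replacing the fixed weight $\pppg{A_h}^{-s}$ by the $\e$-dependent weight $Q_h(\e) = \pppg{A_h}^{-s}\pppg{\e A_h}^{s-1}$, which satisfies $\nr{A_h Q_h(\e)} = \e^{s-1}$. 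Your proposal keeps the weight $\pppg{A_h}^{-s}$ fixed, so you never generate the beneficial $\e^{s-1}$ gain when one factor of $A_h$ has to be absorbed, and you cannot avoid the raw quadratic term. (Also, $A_h(1+i\e A_h)^{-1}$ is \emph{not} self-adjoint; it is normal. This is a minor point since Mourre's original scheme uses it anyway, but it signals that you are mixing the two regularization schemes without committing to either.)

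\textbf{The deformation needs the spectral cutoffs.} The paper deforms by $i\e\,P_h\Th_h^V P_h$ with $P_h = \vf(\hol)$, not by $i\e\Th_h^V$ directly: the Mourre estimate only gives positivity \emph{after} sandwiching with $\1_J(\hol)$, and without the cutoffs the deformed operator is not manifestly dissipative, so Proposition~\ref{propII.5} cannot be applied. You gesture at ``Mourre localization'' for the off-$J$ part, but the splitting into $P_h$ and $P_h' = 1 - P_h$ pieces, with $P_h'(\hol - z)^{-1}$ uniformly elliptic, is a step that must be carried out before the differential inequality can even be set up.

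\textbf{The H\"older estimate.} Your $R$-splitting in $\pppg{A_h}$ is a plausible interpolation idea, but the paper does something both simpler and more in tune with the machinery already in place: once the uniform bound is known, one gets $\nr{F_{z,h}(\e) - F_{z,h}(0)} \lesssim \a_h^{-1}\e^{s-1/2}$ from the integrated derivative bound and $\nr{F_{z,h}(\e) - F_{z',h}(\e)} \lesssim \abs{z-z'}/(\a_h^2\e)$ from $\nr{G_{z,h}(\e)Q_h(\e)}$, and then optimizes $\e = \a_h^{-2/(2s+1)}\abs{z-z'}^{2/(2s+1)}$. Your splitting would need the same ingredient ($\e$-dependent weights controlling powers of $A_h$) to produce the exponent $(2s-1)/(2s+1)$, so without fixing the earlier point the exponent is not recoverable.

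In short: the conceptual outline is correct, but the proof as sketched would stall at the integration of the differential inequality. You need to adopt either the $\e$-dependent weights $Q_h(\e)$ or the full Mourre regularization $A_h^\e$ \emph{together} with the spectral cutoffs $P_h$, and keep careful track of the fractional powers of $\e$, before the scheme can be made to close.
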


\begin{remark}
As in \cite{mourre81}, if we only need resolvent estimates for an operator $H = H_0 -iV$ where $H_0$ is selfadjoint and $V$ is selfadjoint, nonnegative and $H_0$-bounded, we look for a conjugate operator which satifies the same assumptions as in definition \ref{def-conj} with a weaker Mourre condition:
\begin{equation*}
\1 J (H_0) \left( i[H_0,A]^0 + C_V V \right) \, \1 J (H_0)  \geq \a \1 J (H_0) + \1 J (H_0) K \1 J (H_0)
\end{equation*}
where $K$ is a compact operator on $\Hc$. Indeed, for any $E \in J \cap \s_c(H_0)$ (the continuous spectrum of $H_0$) we can find $\d > 0$ such that:
\[
\1 {[E-\d,E+\d]}(H_0) K \1 {[E-\d,E+\d]}(H_0) \geq -\frac \a 2 \1 {[E-\d,E+\d]}(H_0)
\]
hence $A$ is conjugate to $H$ on $[E-\d,E+\d]$ with bound $\frac \a 2$ in the sense of definition \ref{def-conj}.
\end{remark}

The proof of theorem \ref{th-mourre} follows that of the selfadjoint analog:

\begin{proof}
Let $I\subset J$ be a closed interval and $s \in \left]\frac 12 , 1 \right]$ (the conclusions are weaker for $s>1$).
Throughout the proof, $c$ stands for a constant which may change but does not depend on $z \in \C_{I,+}$, $\e \in ]0,1]$ and $h \in ]0,1]$.\\

\noindent {\bf 1.}
Let $\vf \in C_0^\infty(J,[0,1])$ with $\vf= 1$ in a neighborhood of $I$. We set $P_h = \vf(\hol)$ and $P'_h = (1-\vf)(\hol)$. We also define: $\Th_{R,h} = i[\hol,A_h]^0$, $\Th_{I,h} = i[V_h,A_h]^0$, $\Th_h =  \Th_{R,h} -i\Th_{I,h}$ and $\Th_h^V = C_V V_h + \Th_h$, $C_V$ being given by assumption (e).
Then by assumptions (c) and \eqref{rel-bound}, $\Th_h^V$ is $\hol$-bounded and:
\begin{equation} \label{pb-bounded}
\nr{\Th_h P_h} + \nr{P_h \Th_h} \leq c \sqrt {\a_h}  
\end{equation}

The operator $V_h$ is $\hol$-bounded and $P_h \Th_h^V P_h$ is bounded, so for all $h,\e\in ]0,1]$ we can apply proposition \ref{propII.5} with $T_R = \hol - \e P_h \Th_{I,h} P_h$ and $T_I = V_h + \e P_h(C_V V_h + \Th_{R,h})P_h$. Indeed by assumption (e) we have:
\begin{equation} \label{minor-mourre}
\begin{aligned}
0 \leq (\sqrt{\a_h} P_h)^2  = \a_h P_h \1 J(\hol)^2 P_h  \leq   P_h (C_V V_h + \Th_{R,h} ) P_h  
\end{aligned}
\end{equation}
and hence $T_I$ is nonnegative so $\gzle = (H_h - i\e P_h \Th_h^V P_h - z)\inv$ is well-defined for any $z\in\C_+$.

Then we write $\rle = \pppg {A_h}^{-s} \pppg {\e A_h}^{s-1}$ and finally: $\fzle = \rle \gzle \rle$. By functional calculus we have:
\begin{equation} \label{estim-arle}
\nr \rle \leq 1\quad \text{and} \quad \nr{A_h \rle} = \nr{\rle A_h} = \e^{s-1}
\end{equation}
and the second part of proposition \ref{propII.5} with $B = \sqrt {V_h}$ and $Q = \rle$ for all $h,\e\in]0,1]$ gives :
\begin{equation} \label{estim-sqrtV}
\nr{\sqrt {V_h} \gzle \rle} \leq \nr \fzle ^{\frac 12}
\end{equation}

\noindent {\bf 2.}
By \eqref{minor-mourre} and proposition \ref{propII.5} now applied with $B = \sqrt{\a_h} \sqrt \e P_h$, we also have:
\begin{equation} \label{estim-pgr}
\nr{P_h \gzle \rle} \leq  \frac  1 {\sqrt {\a_h} \sqrt \e} \nr\fzle ^{\frac 12}
\end{equation}
On the other hand:
\begin{equation}\label{calcul-pp}
\begin{aligned}
(1+\sqrt {V_h})P'_h \gzle \rle 
& = (1+\sqrt {V_h})P'_h  (\hol-z)\inv(1  +  i(V_h + \e P_h \Th_h^V P_h)\gzle) \rle \\
& = (1+\sqrt {V_h})P'_h  (\hol-z)\inv \rle \\
& \quad + i (1+\sqrt {V_h})P'_h  (\hol-z)\inv V_h  \gzle \rle \\
& \quad + i \e  (1+\sqrt {V_h})P'_h  (\hol-z)\inv  P_h \Th_h P_h\gzle \rle \\
& \quad + i \e C_V (1+\sqrt {V_h})P'_h  (\hol-z)\inv  P_h V_h P_h\gzle \rle 
\end{aligned}
\end{equation}
By functional calculus and \eqref{rel-bound} we have :
\[
\nr{(1+\sqrt{V_h}) P'_h (\hol -z)\inv (1+\sqrt{V_h})} \leq c
\]
With \eqref{estim-sqrtV}, \eqref{pb-bounded} and \eqref{estim-pgr}, this proves that the first three terms of \eqref{calcul-pp} are bounded by $c(1+\nr \fzle^{\frac 12})$. For the last term, since $P_h\sqrt{V_h}$ is uniformly bounded, it only remains to estimate:
\[
\begin{aligned}
\e \nr{\sqrt{V_h} P_h \gzle \rle} 
& \leq \e  \nr{\sqrt{V_h}  \gzle \rle} + \e \nr{\sqrt{V_h} P_h \gzle \rle}\\
& \leq \e  \nr \fzle ^{\frac 12} + \e  \nr{(1+\sqrt{V_h}) P'_h \gzle \rle} 
\end{aligned}
\]


For $\e \in ]0,\e_0]$, $\e _0>0$ small enough, we finally obtain:
\begin{equation} \label{primegzerau}
\nr{P'_h \gzle \rle} + \nr{\sqrt{V_h}P'_h \gzle \rle}  \leq c \left(1 +  \nr \fzle ^{\frac 12} \right)
\end{equation}
Together with \eqref{estim-pgr} this gives:
\begin{equation} \label{estim-gzlerle}
\nr \fzle \leq \nr{\gzle \rle} \leq c\left(1 +\frac {\nr \fzle^{\frac 12}}{\sqrt {\a_h} \sqrt \e}\right)
\end{equation}
and hence:
\begin{equation} \label{estimfze}
\nr \fzle \leq \frac c {\a_h \e}
\end{equation}

Note that by \eqref{rel-bound} we also have:
\begin{equation} \label{estim-H2}
\begin{aligned}
\nr{\hol \gzle \rle }
& \leq  \frac 1 {1-a} \nr{H_h \gzle \rle } + \frac b{1-a} \nr{ \gzle \rle} \\
& \leq c\left(1 +\frac {\nr \fzle^{\frac 12}}{\sqrt {\a_h} \sqrt \e}\right)
\end{aligned}
\end{equation}
while \eqref{estim-sqrtV} and \eqref{primegzerau} give:
\begin{equation} \label{est-sqrtV-P}
\nr{\sqrt {V_h} P \gzle \rle} \leq  c \left(1 +  \nr \fzle ^{\frac 12} \right)
\end{equation}

\noindent {\bf 3.} We now estimate the derivative of $F_{z,h}$ with report to $\e$:
\[
\begin{aligned}
\frac d{d\e} \fzle 
& = i C_V \rle \gzle P_h V_h P_h \gzle \rle \\
& = i \rle \gzle P_h \Th_h P_h \gzle \rle \\
& \quad + \frac {d\rle}{d\e} \gzle \rle + \rle \gzle \frac {d\rle}{d\e}
\end{aligned}
\]
Functional calculus gives:
\[
\nr{\frac {d\rle}{d\e}}\leq c \e ^{s-1}
\]
so the last two terms can be estimated by:
\[
\nr{\frac {d\rle}{d\e} \gzle \rle + \rle \gzle \frac {d\rle}{d\e}} \leq c \e ^{s-1}\left(1 +\frac {\nr \fzle^{\frac 12}}{\sqrt {\a_h} \sqrt \e}\right)
\]
By \eqref{est-sqrtV-P} we have :
\[
\nr{ \rle \gzle P_h V_h P_h \gzle \rle }\leq c (1+\nr \fzle)
\]
and for the second term we replace $P_h \Th_h P_h$ by $\Th_h - P_h \Th_h P'_h - P'_h \Th_h P_h - P'_h \Th_h P'_h$, which gives:
\[
i \rle \gzle P_h \Th_h P_h \gzle \rle = D_1 + D_2 + D_3 + D_4
\]
with:
\[
\begin{aligned}
\nr{D_2}
& = \nr {\rle \gzle P_h  \Th_h P'_h \gzle \rle}\\
& \leq \nr{\rle \gzle} \nr{{P_h  \Th_h }} \nr{ P'_h \gzle \rle}\\
& \leq c \left(1+\frac 1 { \sqrt{\a_h}\sqrt \e} \nr{\fzle} ^{\frac 12} \right) \times c \sqrt {\a_h} \times \left(1+ \nr \fzle ^{\frac 12}\right)\\
& \leq c \left( 1+  \frac 1 {\sqrt \e} \nr \fzle \right)
\end{aligned}
\]
$D_3$ is estimated similarly, while we use \eqref{estim-H2} for $D_4$:
\[
\begin{aligned}
\nr{D_4}
& = \nr {\rle \gzle P'_h \Th_h P'_h \gzle \rle}\\
& \leq \nr {\rle \gzle P'_h}  \nr {\Th_h (\hol -i)\inv P'_h} \nr {(\hol+i) \gzle \rle}\\
& \leq  c \left( 1+  \frac 1 {\sqrt \e} \nr \fzle \right)
\end{aligned}
\]

%
To estimate $D_1$, we are going to use the choice of $\Th_{R,h}$ and $\Th_{I,h}$ as commutators with $H_h$. By proposition II.6 in \cite{mourre81}, $\gzle$ maps $\Dom_A$ into $\Dom_H \cap \Dom_A$, so we can compute, in the sense of quadratic forms on $\Dom_H \cap \Dom_A$:
\begin{eqnarray} \label{terme1}
\nonumber
\lefteqn{\rle \gzle \Th_h \gzle \rle}\\
\nonumber &\hspace{3cm}& 
= i \rle \gzle [H_h,A_h] \gzle \rle \\
\nonumber && 
= i \rle \gzle [H_h - z -i\e P_h \Th_h^V P_h,A_h] \gzle \rle \\
\nonumber && 
\quad - \e  \rle \gzle [P_h \Th_h^V P_h ,A_h] \gzle \rle \\
 && 
= i \rle [A_h ,\gzle] \rle \\
\nonumber && 
\quad - \e  \rle \gzle [P_h \Th_h^V P_h ,A_h] \gzle \rle
\end{eqnarray}

For $\f,\p \in \Dom_H \cap \Dom_A$, we have:
\[
\abs{\innp{\gzle \rle \f}{A_h \rle \f}} \leq c \, \a_h^{-\frac 12} \e ^{s -\frac 32} \nr \fzle^{\frac 12} \nr \f \nr \p 
\]
according to \eqref{estim-gzlerle} and \eqref{estim-arle}. 

By proposition II.6 in \cite{mourre81}, the quadratic form $[P_h \Th_h^V P_h ,A_h]$ has the properties of $[\Th_h^V,A_h]$ given by assumption (d). With \eqref{estim-H2} this proves:
\begin{eqnarray*}
\lefteqn{\e \abs{ \innp{ [P_h \Th_h^V P_h,A_h] \gzle \rle \f}{\gzle ^* \rle \p}}} \\
&\hspace{2cm}& \leq c \, \a_h \, \e  \nr{\gzle \rle \f}_{\G_h} \nr{\gzle \rle \p}_{\G_h} \\
&& \leq c (1+\nr \fzle ) \nr \f \nr \p 
\end{eqnarray*}
So both terms in \eqref{terme1} extend to bounded operators and:
\begin{equation*}
\nr{D_1}  \leq c \, \a_h^{-\frac 12} \e ^{s -\frac 32} \left(1 + \nr \fzle^{\frac 12}\right) + c \left( 1+ \nr\fzle\right)
\end{equation*}
and hence we have proved:
\begin{equation*} 
\nr{\frac d {d\e} \fzle}\leq c + \frac c {\sqrt \e } \nr \fzle + \frac {c\e^{s - \frac 32}}{\sqrt{\a_h}} \nr \fzle ^{\frac 12}
\end{equation*}
which can also be written:
\begin{equation} \label{estim-derivee}
\nr{\frac d {d\e} \a_h \fzle}\leq c + \frac c {\sqrt \e } \nr {\a_h \fzle} + c\e^{s - \frac 32} \nr {\a_h \fzle} ^{\frac 12}
\end{equation}

\noindent {\bf 4.} Using lemma 3.3 in \cite{jensenmp84} with \eqref{estimfze} and \eqref{estim-derivee}, we get that $\fzle$ can be continuously continued for $\e = 0$. Furthermore, the constants in this lemma do not depend on the function but only on the estimates. Since $(\a_h \fzle)$ and its derivative with report to $\e$ are estimated uniformly in $h$, we can conclude that $\a_h F_{z,h}(0)$ is uniformly bounded in $h$, which is exactly \eqref{estim-unif}.\\

\noindent {\bf 5.} Since $\fzle$ is bounded as a function of $\e$, \eqref{estim-derivee} becomes:
\[
\nr{\frac d {d\e} \fzle} \leq c \, \a_h \inv \e ^{s-\frac 32}
\]
and gives:
\begin{equation}  \label{fze-zero}
\nr{\fzle - F_{z,h}(0)} \leq c \, \a_h \inv \e^{s-\frac 12}
\end{equation}
Moreover with \eqref{estim-gzlerle} we get:
\[
\nr{\frac d {dz} \fzle} \leq \nr{\rle \gzle^2 \rle} \leq \nr{ \gzle \rle}^2 \leq \frac {c} {\a_h^2 \e}
\]
and hence for $z,z' \in \C_{I,+}$:
\begin{equation} \label{fzzpe}
\nr{\fzle - F_{z',h}(\e)} \leq  \frac{c\abs{z-z'}}{\a_h^2 \e}
\end{equation}

Take now $z,z' \in \C_{I,+}$ close enough, $h\in]0,1]$ and $\e = \a_h^{-\frac {2}{2s+1}} \abs{z-z'}^{\frac 2 {2s+1}}$. Then \eqref{fze-zero} and \eqref{fzzpe} give \eqref{estimzzp}. 

In particular, for $E \in I$ the map $\m \mapsto F_{E+i\m,h}(0)$ has a limit for $\m \to 0^+$, and taking the limit $\m \to 0$ in \eqref{estimzzp} with $z = E +i\m$ and $z'= E'+i\m$ shows that the limit is Hölder-continuous with report to $E$ and finishes the proof.
\end{proof}

\begin{remark}
We added the uniform estimate on $[V_h,A_h]$ in assumptions (d) because we had to put $V_h$ in the $\e$-term of $\gzle$ in order to use the weak Mourre estimate \eqref{estim-mourre}. But this assumption is useless if we can take $C_V = 0$ in \eqref{estim-mourre}.\footnote{ Initially, estimate $\nr{V_h(H_0^h+i)^{-1}}
=O(\sqrt{\alpha _h})$ was also required in assumption (c). I thank Th. Jecko for pointing out that this can be avoided.}
\end{remark}

\section{Application to the dissipative Helmholtz equation}  \label{sec-appl-schr}

In this section we apply the abstract Mourre theory to the dissipative Schrödinger operator. Let $V_1 \in C^\infty(\R^n,\R)$ with:
\begin{equation} \label{estimV}
\forall \a \in \N^n, \forall x \in \R^n, \quad \abs{\partial ^\a V_1 (x)} \leq C_\a \pppg x ^{-\rho - \abs \a}
\end{equation}
for some $\rho > 0$ and $C_\a \geq 0$. Let $V_2 \in C^\infty(\R^n,\R)$ nonnegative, with bounded derivatives (up to any order) and:
\begin{equation}
V_2(x) \limt {\abs x} \infty 0
\end{equation}
  
We consider on $L^2(\R^n)$ the operator:
\[
 H_h = -h^2 \D + V_1 - i \nu(h) V_2
\]
where $\n(h) \in ]0,1]$. We denote by $\hun = -h^2 \D +V_1(x)$ the selfadjoint part of $H_h$, $\tilde \nu (h) = \min(1,\nu(h)/h)$ and:
\[
\Oc = \{ (x,\x) \in \R^{2n} \tqe V_2(x) > 0 \}
\]
We also write $\Opw (a)$ for the Weyl-quantization of a symbol $a$ (see \cite{robert,martinez,evansz}):
\[
\Opw (a) u (x) = \frac 1 {(2\pi h)^n} \int_{\R^n} \int_{\R^n} e^{\frac ih \innp{x-y} \x} a\left(\frac{x+y}2,\x \right) u(y)\, dy \, d\x
\]

\subsection{Hamiltonian flow}

Let $p : (x,\x) \mapsto \x^2 + V_1(x)$ be the symbol of $\hun$, and $(x_0,\x_0) \mapsto \vf^t(x_0,\x_0) = (\bar x(t,x_0,\x_0),\bar \x(t,x_0,\x_0)) \in \R^{2n}$ the corresponding hamiltonian flow:
\[
\begin{cases}
\partial_t \bar x(t,x_0,\x_0) = 2 \bar \x(t,x_0,\x_0) \\
\partial_t \bar \x(t,x_0,\x_0) = -\nabla V_1 (\bar x(t,x_0,\x_0)) \\
\vf^0(x_0,\x_0)  = (x_0,\x_0)
\end{cases}
\]

For $I \subset \R$ we introduce:
\[
\begin{aligned}
\O_b(I) &= \{ w \in p\inv(I) \tqe \{ \bar x(t,w)\}_{t\in\R} \text { is bounded} \} \\
\O_\infty^\pm(I) &= \{ w \in p\inv(I) \tqe 
\abs{ \bar x(t,w)} \limt t {\pm \infty} \infty \} \\
\end{aligned}
\]

We recall a few basic facts about this flow:

\begin{proposition} \label{prop-flot}
\begin{enumerate}[(i)]
\item For $a \in C^\infty(\R^{2n})$ we have $\partial_t( a\circ \vf^t) =  \{p,a \circ \vf^t \}$ where $\{ \cdot , \cdot \}$ is the Poisson bracket.
\item If $I \subset \R_+^*$ is closed, there exists $R_0(I) > 0$ such that for any $R \geq R_0(I)$, a trajectory of energy in $I$ which leaves $B_x(R)$ (in the future or in the past) cannot come back.
\item If $I \subset \R_+^*$, $p\inv (I) = \O_b(I) \cup \O_\infty^+(I) \cup  \O_\infty^-(I)$.
\item If $I \subset \R_+^*$ is closed, then $\O_b(I)$ is compact in $\R^{2n}$.
\item If $I \subset \R_+^*$ is open, then $\O_\infty^+(I)$ and $\O_\infty^-(I)$ are open.
\end{enumerate}
\end{proposition}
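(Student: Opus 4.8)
The statement is Proposition~\ref{prop-flot}, a list of standard facts about the Hamiltonian flow of $p(x,\x) = \x^2 + V_1(x)$ under the decay hypothesis \eqref{estimV}. I would treat the five items in turn, since each rests on a different elementary observation, and I expect item (iii) (the trichotomy) to be the technical heart; everything else follows quickly once the escape mechanism in (ii) is in hand.

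For (i), this is just the definition of the Hamiltonian flow: $\partial_t(a\circ\vf^t) = \nabla a \cdot (\partial_t \bar x, \partial_t \bar\x) = \partial_x a\cdot 2\bar\x - \partial_\x a \cdot \nabla V_1 = \{p, a\}\circ\vf^t$, using that $\{p,a\} = \partial_\x p\cdot\partial_x a - \partial_x p\cdot\partial_\x a$ and $\vf^t$ is the flow of the Hamiltonian vector field; nothing more is needed. For (ii), the plan is the classical ``no return'' argument: on $p\inv(I)$ with $I\subset\R_+^*$ closed, say $I\subset[E_-,E_+]$ with $E_->0$, the decay \eqref{estimV} gives $|V_1(x)|\leq C\pppg x^{-\rho}$, so there is $R_0$ such that $|V_1(x)| \leq E_-/2$ for $|x|\geq R_0$; on the energy shell this forces $|\bar\x(t)|^2 = p - V_1 \geq E_-/2$, hence $\bar\x$ is bounded away from $0$ there. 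Consider $f(t) = |\bar x(t)|^2$; then $\dot f = 2\bar x\cdot\partial_t\bar x = 4\bar x\cdot\bar\x$ and $\ddot f = 8|\bar\x|^2 + 4\bar x\cdot(-\nabla V_1) = 8|\bar\x|^2 - 4\bar x\cdot\nabla V_1$. Since $|\nabla V_1(x)| \leq C\pppg x^{-\rho-1}$, for $R_0$ large enough $|4\bar x\cdot\nabla V_1| \leq 4|x|\cdot C\pppg x^{-\rho-1} \leq 2E_- \leq 4|\bar\x|^2$ when $|x|\geq R_0$, so $\ddot f \geq 4|\bar\x|^2 \geq 2E_- > 0$ on $\{|x|\geq R_0\}$. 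Thus once the trajectory reaches a point with $|\bar x(t_0)|\geq R_0$ and $\dot f(t_0)\geq 0$ (which happens when it is ``leaving'' the ball), $f$ is convex and increasing from there on, so $|\bar x(t)|\to\infty$ and it never returns; enlarging $R_0$ slightly absorbs the boundary case, and the past direction is symmetric by time reversal.

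For (iii): take $w\in p\inv(I)$ with $I\subset\R_+^*$; I may assume $I$ closed (write $\R_+^*$ as an increasing union of such $I$ and note the three sets on the right only grow). If $\{\bar x(t,w)\}_{t\geq 0}$ is unbounded, pick $t_0$ with $|\bar x(t_0,w)|\geq R_0$; I still need $\dot f(t_0)\geq 0$ to invoke (ii). If not, I argue that unboundedness of the forward orbit forces the existence of \emph{some} later time where $|\bar x|\geq R_0$ with nonnegative derivative — e.g. let $t_1 = \sup\{t\geq 0 : |\bar x(t,w)| < R_0\}$; if this is finite then $|\bar x(t_1,w)| = R_0$ and, by maximality, $|\bar x(t,w)|\geq R_0$ for $t\geq t_1$, so $f$ is convex on $[t_1,\infty)$ and, being $\geq R_0^2$ there while the orbit is unbounded, cannot have $\dot f < 0$ throughout, giving a point to apply (ii) from; if $t_1 = \infty$ the orbit stays in a ball, contradiction. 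Hence forward-unbounded $\Rightarrow$ $w\in\O_\infty^+(I)$, and likewise backward-unbounded $\Rightarrow$ $w\in\O_\infty^-(I)$; if neither, $w\in\O_b(I)$. This covers all cases. For (iv): $\O_b(I)\subset p\inv(I)\cap (B_x(R_0)\times\R^n)$ is bounded (the $\x$-component is controlled by $|\bar\x|^2 = p - V_1 \leq E_+ + \sup|V_1|$ on the energy shell, which is compact since $V_1$ is bounded); closedness follows because the complement is $\O_\infty^+(I)\cup\O_\infty^-(I)$, which I show is open in (v), or directly: if $w_k\to w$ in $p\inv(I)$ with $w_k\in\O_b(I)$ all contained in a fixed compact $K$ (by the bound just proved, the whole orbits stay in $B_x(R_0)$, but more carefully the orbits of points near $w$ also escape only if $w$ escapes)... here I'd lean on continuous dependence of $\vf^t$ on initial data together with (ii): if $w\notin\O_b(I)$ then $w\in\O_\infty^\pm$, and by (v) a whole neighborhood is in $\O_\infty^\pm$, excluding the $w_k\in\O_b$. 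For (v): if $w\in\O_\infty^+(I)$ then by (ii) there is $t_0$ with $|\bar x(t_0,w)| > R_0$ and $\dot f(t_0) > 0$ (strict, by the convexity argument — or just pick $t_0$ large); by continuity of $w'\mapsto\vf^{t_0}(w')$ the same strict inequalities hold on a neighborhood of $w$ inside $p\inv(I)$ (open since $I$ is open), and then (ii)'s convexity argument applies uniformly to show each such $w'\in\O_\infty^+(I)$; the $-$ case is symmetric.

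\textbf{Main obstacle.} The delicate point is (iii) and the companion closedness claim in (iv): one must be careful that ``forward orbit unbounded'' genuinely implies ``eventually in the escape region with outward velocity,'' rather than the orbit oscillating in and out of $B_x(R_0)$ infinitely often. The convexity of $f(t) = |\bar x(t)|^2$ on $\{|x|\geq R_0\}$ is exactly what rules this out — on each excursion outside $B_x(R_0)$, $f$ is convex, so it can dip back in at most once per excursion and, crucially, once it exits with $\dot f\geq 0$ it is gone forever; combined with unboundedness this pins down a last entry time. I would make sure the constant $R_0$ in (ii) is chosen once and for all (depending only on $I$) so that both the lower bound $|\bar\x|^2\geq E_-/2$ and the convexity estimate $\ddot f\geq 2E_- > 0$ hold simultaneously on $\{|x|\geq R_0\}\cap p\inv(I)$, and then reuse it verbatim in (iii)--(v).
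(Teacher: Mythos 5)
The paper gives no proof of this proposition: the text reads ``We recall a few basic facts about this flow'' and then passes directly to the next subsection, so these five items are treated as standard and the reader is implicitly referred to the literature (in particular to \cite{gerardm88}, which the author says he follows). There is therefore no ``paper's approach'' to compare against, and your reconstruction should be judged on its own merits.

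Your argument is correct and is the standard one. Item (ii), the virial/convexity computation showing $f(t) = |\bar x(t)|^2$ satisfies $\ddot f = 8|\bar\x|^2 - 4\bar x\cdot\nabla V_1 \geq 2E_- > 0$ on $p^{-1}(I)\cap\{|x|\geq R_0\}$, is indeed the engine: it rules out re-entry, forces the dichotomy ``eventually inside $B_x(R_0)$ forever'' vs. ``escapes monotonically,'' and the remaining items (iii)--(v) all reduce to it plus continuous dependence of $\vf^t$ on initial data; your handling of the ``last entry time'' $t_1$ in (iii) and the continuity argument in (v) are both sound. Two small points worth tightening when writing this up. First, in (i) you derive $\partial_t(a\circ\vf^t) = \{p,a\}\circ\vf^t$ whereas the statement reads $\{p, a\circ\vf^t\}$; these are equal because $\vf^t$ is a symplectomorphism preserving $p$ (so $\{p,a\}\circ\vf^t = \{p\circ\vf^t, a\circ\vf^t\} = \{p, a\circ\vf^t\}$), or more directly the stated form falls out of the group law $\vf^{t+s} = \vf^t\circ\vf^s$ by differentiating in $s$ at $s=0$; say one or the other. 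Second, for closedness in (iv) with $I$ closed, the cleanest route is to fix a slightly larger open $I'\subset\R_+^*$, note that $\O_b(I) = \O_b(I')\cap p^{-1}(I)$ (since membership depends only on the orbit and the energy), and then use openness of $\O_\infty^\pm(I')$ from (v) together with closedness of $p^{-1}(I)$; this removes the slight circularity you flag in your own sketch of the direct argument.
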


\subsection{Limiting absorption principle for the dissipative Schrödinger operator}

\begin{theorem}  \label{th-schrodinger}
Let $E >0$ and $s >\frac 12$. If all bounded trajectories of energy $E$ meet $\Oc$, then there exists $c \geq 0$, $h_0>0$ and $I = [E-\d,E + \d]$, $\d > 0$, such that:
\begin{enumerate}[(i)]
\item
For all $h \in ]0,h_0]$:
\begin{equation} \label{unif-schro}
\sup_{z \in \C_{I,+}} \nr{ \pppg x ^{-s}  (H_h-z)\inv \pppg x ^{-s}} \leq \frac c {h\tilde \nu (h)}
\end{equation}
\item
For all $h \in ]0,h_0]$ and $z,z' \in \C_{I,+}$:
\begin{equation}  \label{estim-diff}
\nr{\pppg x ^{-s} \left( (H_h-z)\inv - (H_h - z') \inv \right) \pppg x ^{-s}} \leq  c \, (h\tilde \nu (h))^{-\frac{4s}{2s+1}} \abs {z-z'}^{\frac{2s-1}{2s+1}}
\end{equation}
\item
For $\l \in I$ and $h\in]0,h_0]$ the limit:
\begin{equation} \label{limite}
\pppg x ^{-s}  (H_h-(\l+i0))\inv \pppg x ^{-s} = \lim_{\m \to 0^+} \pppg x ^{-s}  (H_h-(\l+i\m))\inv \pppg x ^{-s}
\end{equation}
exists in $\Lc(L^2(\R^n))$ and is a $\frac{2s-1}{2s+1}$-Hölder continuous function of $\l$.
\end{enumerate}
\end{theorem}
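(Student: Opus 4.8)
The plan is to verify that the semiclassical dissipative Schr\"odinger operator $H_h = H_1^h - i\nu(h)V_2$ fits the abstract framework of Theorem~\ref{th-mourre}, and then simply invoke that theorem with $\alpha_h = c\, h\tilde\nu(h)$ on a suitable interval $I$ around $E$. First I would set $H_0^h = H_1^h$ and $V_h = \nu(h)V_2$; the relative bound \eqref{rel-bound} is immediate since $V_2$ is bounded, and in fact $V_h$ is bounded with norm $O(\nu(h))$. The conjugate family is the standard choice $A_h = \frac12(x\cdot hD + hD\cdot x)$, the semiclassical generator of dilations (following G\'erard--Martinez \cite{gerardm88}). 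For this choice one computes $i[H_1^h,A_h]^0 = 2H_1^h - x\cdot\nabla V_1$, which is $H_1^h$-bounded with the right powers of $h$ built in (each commutator with $A_h$ costs one factor of $h$), and $i[V_h,A_h]^0 = -\nu(h)\, x\cdot\nabla V_2$, bounded with norm $O(h\nu(h))$. The domain and core conditions $(a)$, $(b)$ and the double-commutator bounds $(c)$, $(d)$ are exactly the verifications carried out in the semiclassical Mourre literature; they hold here because $V_1$ is long-range \eqref{estimV} and $V_2$ has bounded derivatives of all orders.

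The heart of the matter is establishing the weakened Mourre estimate \eqref{estim-mourre}:
\begin{equation*}
\1 I(H_1^h)\bigl(i[H_1^h,A_h]^0 + C_V \nu(h)V_2\bigr)\1 I(H_1^h) \geq \alpha_h\, \1 I(H_1^h)
\end{equation*}
with $\alpha_h \sim h\tilde\nu(h)$. The standard semiclassical argument replaces $i[H_1^h,A_h]^0 = 2H_1^h - x\cdot\nabla V_1$ by its principal symbol $2p - x\cdot\nabla V_1$ modulo $O(h)$ errors localized near energy $E$, and via a pseudodifferential partition of unity reduces to a statement about the classical flow. On the non-trapping part $\Omega_\infty^\pm$ one uses the classical escape-function construction: for trajectories that go to infinity, $x\cdot\nabla V_1 \to 0$ while $p \to E > 0$, so $2p - x\cdot\nabla V_1$ is bounded below by a positive constant along them, and one builds a symbol with positive Poisson bracket. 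The new ingredient, compared with the self-adjoint case, is the compact trapped set $\Omega_b(\{E\})$ (compact by Proposition~\ref{prop-flot}(iv)): by hypothesis every bounded trajectory of energy $E$ meets $\mathcal{O} = \{V_2 > 0\}$, so by compactness and continuity of the flow there is $T>0$ and $c_0>0$ such that $\int_{-T}^T V_2(\bar x(t,w))\,dt \geq c_0$ for all $w$ in a neighborhood of $\Omega_b(\{E\})$; averaging the classical observable $A_h$ along the flow over $[-T,T]$ then converts this time-integrated positivity of $V_2$ into a pointwise lower bound of the form $\text{(escape part)} + C_V V_2 \gtrsim 1$ near the trapped set. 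Quantizing and patching the two regions gives the operator inequality with a loss of one power of $h$ from the symbol calculus (hence the $h$) and one power of $\tilde\nu(h) = \min(1,\nu(h)/h)$ from the size of the $\nu(h)V_2$ term relative to the commutator scale $h$ (hence $\tilde\nu(h)$), so $\alpha_h = c\,h\tilde\nu(h)$.

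With all five conditions of Definition~\ref{def-conj} verified on $J = (E-2\delta, E+2\delta)$ with bound $\alpha_h = c\,h\tilde\nu(h)$ for $\delta$ small enough and $h \le h_0$, Theorem~\ref{th-mourre} applied on the closed subinterval $I = [E-\delta,E+\delta]$ yields \eqref{estim-unif}, \eqref{estimzzp}, \eqref{ablim} with $\langle A_h\rangle^{-s}$ in place of $\langle x\rangle^{-s}$. The final step is to replace the weight $\langle A_h\rangle^{-s}$ by $\langle x\rangle^{-s}$: this is standard and follows from the boundedness, uniformly in $h\in]0,h_0]$, of $\langle A_h\rangle^{s}\langle x\rangle^{-s}$ and $\langle x\rangle^{-s}\langle A_h\rangle^{s}$ on $L^2(\R^n)$ — indeed $\langle A_h\rangle^s\langle x\rangle^{-s}$ is a semiclassical pseudodifferential operator of order $0$ uniformly in $h$ (one checks this via Weyl calculus, using that the symbol of $A_h$ is $x\cdot\xi$ and $\langle x\cdot\xi\rangle^s\langle x\rangle^{-s}$ has the required symbol bounds on the energy shell after a suitable cutoff, absorbed using ellipticity of $H_1^h - E$).

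I expect the main obstacle to be the quantitative flow estimate behind \eqref{estim-mourre}: making the averaging argument on the trapped set uniform and tracking precisely how the powers $h$ and $\tilde\nu(h)$ enter, i.e.\ justifying that the gain from $V_2$ is of relative size exactly $\tilde\nu(h)$ and not worse. The rest — domain/core bookkeeping and the change of weights — is routine semiclassical technology, though it must be done with care to keep every constant independent of $h$.
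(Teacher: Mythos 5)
Your overall strategy coincides with the paper's (fit the Schr\"odinger operator into the abstract Mourre framework of Theorem~\ref{th-mourre} with $\alpha_h\sim h\tilde\nu(h)$, then exchange the abstract weight for $\pppg{x}^{-s}$), but two steps as you describe them would not go through.

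First, the conjugate operator cannot be the bare generator of dilations $A_h$. The weak Mourre estimate for $A_h$ would require the symbol $2p - x\cdot\nabla V_1 + C_V V_2$ to be bounded below on $p^{-1}(I)$, which generally fails: $2p-x\cdot\nabla V_1$ is the classical escape rate and changes sign near a trapped set, while $C_V V_2$ vanishes on much of a trapped trajectory (only the time-averaged integral of $V_2$ is controlled, not the pointwise value). Your averaging argument is exactly the mechanism that repairs this, but the result of that averaging is a symbol correction $r\in C_0^\infty(\R^{2n})$ --- the Poisson brackets $\{p,f_w\}$ and $\{p,f_\pm\}$ coming from flowing the bump functions --- and this correction must be added to the conjugate operator itself: the paper takes $F_h = A_h + \Opw(r) = \Opw(x\cdot\xi + r)$, rescaled by $\tilde\nu(h)$. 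You cannot both keep $A_h$ as the conjugate operator and use the averaged escape function to establish positivity; the commutator you need to bound is $i[H_1^h,F_h]$, not $i[H_1^h,A_h]$.

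Second, the claim that $\pppg{A_h}^s\pppg{x}^{-s}$ is a uniformly bounded semiclassical pseudodifferential operator of order zero is false. The symbol $\pppg{x\cdot\xi}^s\pppg{x}^{-s}$ grows like $|\xi|^s$ as $\xi\to\infty$, and no amount of cutting off ``on the energy shell'' fixes this, because there is no resolvent present to localize the frequency. The paper's actual weight exchange inserts the resolvent explicitly: write
\begin{equation*}
(H_h-z)^{-1} = (H_h-i)^{-1} - (z-i)(H_h-i)^{-2} + (z-i)^2 (H_h-i)^{-1}(H_h-z)^{-1}(H_h-i)^{-1},
\end{equation*}
then use that $\pppg{\tilde\nu(h)A_h}^{s}(H_h-i)^{-1}\pppg{x}^{-s}$ is bounded uniformly in $h$ (this is the Perry--Sigal--Simon lemma; here the resolvent $(H_h-i)^{-1}$ supplies the decay in $\xi$ that makes the operator $L^2$-bounded), together with the interpolation bound for $\pppg{\tilde\nu(h)F_h}^{s}\pppg{\tilde\nu(h)A_h}^{-s}$. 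Without the resolvent factorization, the passage from $\pppg{\tilde\nu(h)F_h}^{-s}$ to $\pppg{x}^{-s}$ does not close.

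Everything else you write (the relative-bound check, the role of the trapping hypothesis in producing positivity on a neighborhood of the compact set $\O_b(\{E\})$, the origin of the factor $h\tilde\nu(h)$, the application of Theorem~\ref{th-mourre} on a closed subinterval) is consistent with the paper.
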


\begin{remark}
This condition that a damping perturbation of the Schrödinger operator allows to weaken a non-trapping condition already appears in \cite{aloui-khenissi-07} where dispersive estimates are obtained for the Schrödinger operator on an exterior domain.
\end{remark}

\begin{remark}
We are mainly interested in the cases $\nu (h) = h$ ($\tilde \nu (h) = 1$), as mentionned in the introduction, and $\nu (h) = h^2$ ($\tilde \nu (h) = h$) which appears in the study of the high energy limit for the Schrödinger operator $-\D - i V_2 - z$, $\Re z \gg 1$ (see \cite[§1.2]{aloui-khenissi-07}).
\end{remark}

\begin{remark} \label{remarque-non-captif}
If $E$ is a non-trapping energy, we have the usual estimate in $O(h\inv)$, no matter how small the anti-adjoint part is.
\end{remark}

The proof of theorem \ref{th-schrodinger} follows that of the selfadjoint case given in \cite{gerardm88}: we find a conjugate family of operators using the quantization of an escape function and then we check that this operators can be replaced by $\pppg x$ in the results of theorem \ref{th-mourre}. The only difference is that we need to prove a weaker Mourre estimate so we are allowed to consider a function which is not an escape function where $V_2$ is not zero. Let us denote:
\begin{equation} \label{def-ah}
A_h = \frac 1 2 (x.hD + hD.x)
\end{equation}
the generator of dilations.

\begin{proposition}
For any $r \in C_0^\infty(\R^{2n},\R)$ the operators $\tilde \nu (h) F_h = \tilde \nu(h)(A_h + \Opw (r))$ are selfadjoint and satisfy assumptions (a) to (d) for a conjugate operator to $H_h$.
\end{proposition}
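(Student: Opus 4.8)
The plan is to carry out the semiclassical analogue of the computations of \cite{gerardm88} and verify the clauses (a)--(d) of Definition \ref{def-conj} for the family $A_h^\sharp := \tilde\nu(h)F_h$, with the abstract data $\hol=\hun$, $V_h=\nu(h)V_2$ and bounds $\a_h:=h\tilde\nu(h)$. For self-adjointness, I would first write $A_h = h\bar A$ with $\bar A = \frac12(x\cdot D + D\cdot x)$ the ($h$-independent) generator of dilations, which is essentially self-adjoint on $\mathcal S(\R^n)$ with domain $\Dom_A:=\Dom(\bar A)$ independent of $h$; since $r$ is real and compactly supported, $\Opw(r)$ is bounded and self-adjoint, so $F_h$, and hence $A_h^\sharp$, is self-adjoint on $\Dom_A$ by Kato--Rellich. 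As $V_1$ is bounded, $\Dom_H=H^2(\R^n)$ is also independent of $h$ and contains $\mathcal S(\R^n)$, which is a core for $\hun$; this settles clause (a).

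The heart of the matter is the commutator computation behind (c) and (d). On $\mathcal S(\R^n)$ the part involving $A_h$ is explicit:
\[
i[\hun,A_h] = 2h(-h^2\D) - h\,(x\cdot\nabla V_1),\qquad i[V_2,A_h] = -h\,(x\cdot\nabla V_2),
\]
together with $[-h^2\D,A_h] = -2ih(-h^2\D)$ and $[\,x\cdot\nabla V_j,A_h\,] = ih\,x\cdot\nabla(x\cdot\nabla V_j)$, where by \eqref{estimV} the functions $x\cdot\nabla V_1$, $x\cdot\nabla(x\cdot\nabla V_1)$ are bounded, and likewise $x\cdot\nabla V_2$, $x\cdot\nabla(x\cdot\nabla V_2)$ are bounded by the hypotheses on $V_2$. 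All the remaining brackets --- $i[\hun,\Opw(r)]$, $i[V_2,\Opw(r)]$ and every further commutator of these with $A_h = \Opw(x\cdot\x)$ and with $\Opw(r)$ --- I would treat by the semiclassical Weyl calculus: in each such bracket one factor is a derivative of $r$, hence compactly supported, while $p=\x^2+V_1$ has derivatives controlled by \eqref{estimV}, so every bracket stays in a bounded symbol class and carries an extra power of $h$. The conclusion is that $i[\hun,F_h]$ is $\hun$-bounded with relative bound $O(h)$, $i[V_2,F_h]$ is bounded with norm $O(h)$, and the double brackets $[i[\hun,F_h],F_h]$, $[i[V_2,F_h],F_h]$ are of the same size (their $(-h^2\D)$-contributions being $\hun$-bounded); in particular these forms extend to $\hun$-bounded operators on $\Dom_H$, hence are bounded below and closable with closures containing $\Dom_H$.

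It then remains to track the scalar factors. Multiplying by $\tilde\nu(h)$ and by $\nu(h)\tilde\nu(h)$ respectively and using $\a_h=h\tilde\nu(h)\le1$ (so $\a_h\le\sqrt{\a_h}$ and $\nu(h)\sqrt{\a_h}\le1$), one gets $\nr{i[\hun,A_h^\sharp]^0\f}+\nr{i[V_h,A_h^\sharp]^0\f}\le c\sqrt{\a_h}\,\nr{\f}_{\G_h}$, which is (c); the double brackets of $A_h^\sharp$ are then $O(\a_h^2)=O(\a_h)$, while $[V_h,A_h^\sharp]$ itself has norm $O(\a_h\nu(h))=O(\a_h)$, which gives the estimates required in (d). For clause (b), since $\tilde\nu(h)\le1$ we have $e^{itA_h^\sharp}=e^{isF_h}$ with $\abs s\le\abs t\le1$, so it suffices to bound $\nr{\hun e^{isF_h}\f}$ for $\abs s\le1$, $\f\in\mathcal S(\R^n)$; the $\hun$-relative boundedness of $i[\hun,F_h]$ just established justifies the identity $\frac d{ds}\big(e^{-isF_h}\hun\,e^{isF_h}\big) = e^{-isF_h}\,i[\hun,F_h]\,e^{isF_h}$, exactly as in Proposition~II.6 of \cite{mourre81}, and a Gronwall estimate then yields $\nr{\hun e^{isF_h}\f}\le e^{c\abs s}\big(\nr{\hun\f}+\nr\f\big)$ with $c$ uniform in $h$, i.e. precisely (b). Alternatively one may conjugate $-h^2\D$ and $V_1$ explicitly by the dilation group and absorb the bounded perturbation $\Opw(r)$ by Duhamel.

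The routine parts are the explicit commutator formulas and the bookkeeping of powers of $h$, $\tilde\nu(h)$ and $\nu(h)$. The step I expect to demand real care is making everything \emph{uniform in $h$}: that the Weyl-calculus remainder estimates and, above all, the Gronwall constant in (b) can be chosen independently of $h$. This is exactly what the quantitative form of Definition \ref{def-conj} is designed to capture, and it --- together with the domain-invariance facts that will be needed when \eqref{ablim} is applied (the relevant resolvents mapping $\Dom_A$ into $\Dom_H\cap\Dom_A$) --- is where the bulk of the technical work lies, though it follows the well-trodden route of \cite{mourre81} and \cite{gerardm88}.
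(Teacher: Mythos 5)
Your argument is correct and follows exactly the route the paper intends: the paper omits this proof, remarking only that the imaginary part $-i\nu(h)V_2$ changes nothing compared with the self-adjoint verification in \cite{gerardm88}. Your Kato--Rellich argument for self-adjointness, the explicit dilation commutators together with semiclassical Weyl calculus for (c) and (d) with the identification $\a_h \sim h\tilde\nu(h)$, and the Gronwall/Duhamel argument for (b) are precisely the semiclassical computations the paper is referring to.
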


The proof of this proposition is not really changed by the imaginary part of $V$, so we omit it. The important assumption is the Mourre estimate (e), for which we need to chose $r$ more carefully:

\begin{proposition}
Assume that every bounded trajectory of energy $E$ goes through $\Oc$, then there exists $\e >0$ and $r \in C_0^\infty(\R^{2n},\R)$ such that $\tilde \nu (h) F_h = \tilde \nu (h) (A_h + \Opw(r))$ is conjugate to $H_h$ on $J = [E-\e,E+\e]$ with bounds $c_0 h \tilde \nu(h)$, where $c_0 >0$.
\end{proposition}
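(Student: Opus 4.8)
I would prove the Mourre estimate~(e) of Definition~\ref{def-conj} by reducing it to a classical inequality between symbols and then obtaining that inequality by adapting the escape function construction of \cite{gerardm88}, using the damping $V_2$ to absorb the trapped set. For the reduction: since $A_h$ is the Weyl quantization of $x\cdot\x$ and $\hun$ that of $p(x,\x)=\x^2+V_1(x)$, the semiclassical calculus (remainders controlled by~\eqref{estimV}) gives, in operator norm on $L^2(\R^n)$,
\begin{equation*}
i[\hun,\tilde\nu(h)F_h]^0 = \tilde\nu(h)\,h\,\Opw\!\big(2\x^2 - x\cdot\nabla V_1 + \{p,r\}\big) + O\big(\tilde\nu(h)\,h^3\big).
\end{equation*}
Writing $g = 2\x^2-x\cdot\nabla V_1+\{p,r\}$ and $V_h=\nu(h)V_2$, and noting that $\nu(h)/(h\tilde\nu(h))\geq1$ for all $h\in\,]0,1]$ (the two cases $\nu(h)\leq h$ and $\nu(h)>h$) while $C_V,V_2\geq0$, it suffices to find $r\in C_0^\infty(\R^{2n},\R)$, $C_V\geq0$, $c_0>0$ and $\e>0$ such that
\begin{equation*}
g + C_V V_2 \geq 2c_0 \quad\text{on a neighbourhood of } p\inv(J),\qquad J=[E-\e,E+\e] .
\end{equation*}
Then the sharp G\r{a}rding inequality and $\1_J(\hun)\Opw(\h)=\1_J(\hun)+O(h^\infty)$ for $\h\in C_0^\infty(\R^{2n})$ equal to $1$ near $p\inv(J)$ give $\1_J(\hun)(\Opw(g)+C_VV_2)\1_J(\hun)\geq\frac32 c_0\1_J(\hun)$ for $h$ small, and the $O(\tilde\nu(h)h^3)$ is absorbed, which is~\eqref{estim-mourre} with $\a_h=c_0\,h\,\tilde\nu(h)$.

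For the large-$|x|$ region, on $p\inv([E-\e_0,E+\e_0])$ one has $\x^2=E'-V_1$ with $E'$ near $E$, so $2\x^2-x\cdot\nabla V_1\geq2(E-\e_0)-C\pppg x^{-\rho}\geq3c_1>0$ for $|x|\geq R_1$, $R_1$ large (also $\geq R_0$ of Proposition~\ref{prop-flot}(ii)); so I look for $r$ supported in $\{|x|\leq R_1\}$, which already settles $|x|>R_1$. On the compact part the first classical fact I would establish is that there is $\eta_0>0$ with: every bounded trajectory of energy $E$ meets $\{V_2>2\eta_0\}$, within a time uniform over such trajectories. This follows because $w\mapsto\sup_{t}V_2(\bar x(t,w))$ is lower semicontinuous and, by hypothesis, strictly positive on the compact set $\O_b(\{E\})$ (Proposition~\ref{prop-flot}(iv)), hence $\geq4\eta_0>0$ there, and a finite subcover of $\O_b(\{E\})$ by neighbourhoods on each of which this positivity is witnessed at one fixed time yields the uniform time. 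Put $\Oc'=\{V_2>\eta_0\}$: then $\overline{\Oc'}\subset\Oc$, $V_2\geq\eta_0$ on $\overline{\Oc'}$, and by Proposition~\ref{prop-flot}(ii)--(iii) together with upper semicontinuity of the first time a trajectory meets $\{|x|>R_1\}\cup\Oc'$, every trajectory through $p\inv(E)$ reaches $\{|x|>R_1\}\cup\Oc'$ within a time $T_1$ uniform over $p\inv(E)\cap\{|x|\leq R_1\}$.

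This uniform reaching time is exactly what drives the G\'erard--Martinez construction: I would carry it out as in \cite{gerardm88}, with $\Oc'$ playing, alongside $\{|x|>R_1\}$, the role of a region into which trajectory segments are ``absorbed'' and beyond which the escape inequality is not required. This yields $r\in C_0^\infty(\R^{2n},\R)$ supported in $\{|x|\leq R_1\}$ and a neighbourhood $\Vc$ of $p\inv(E)$ with $\{p,x\cdot\x+r\}>0$ on $\Vc\setminus\overline{\Oc'}$. I then pick $c_0>0$ with $\{p,x\cdot\x+r\}\geq2c_0$ on the compact set $p\inv(E)\cap\{|x|\leq R_1,\ V_2\leq\eta_0/2\}\subset\Vc\setminus\overline{\Oc'}$ (and $2c_0\leq3c_1$), let $-M\leq0$ be a lower bound for $\{p,x\cdot\x+r\}$ on $p\inv([E-\e_0,E+\e_0])\cap\{|x|\leq R_1\}$, and choose $C_V$ with $C_V\eta_0/2-M\geq2c_0$; splitting $p\inv(E)$ into the parts where $|x|>R_1$, where $|x|\leq R_1$ and $V_2\leq\eta_0/2$, and where $|x|\leq R_1$ and $V_2>\eta_0/2$, one gets $g+C_VV_2\geq2c_0$ on $p\inv(E)$, hence on a phase-space neighbourhood of $p\inv(E)$, hence on a neighbourhood of $p\inv(J)$ after shrinking $\e$ by a nested-compactness argument. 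Assumptions~(a)--(d) for $\tilde\nu(h)F_h$ are supplied by the previous proposition.

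The step I expect to be the genuine obstacle is the escape function construction: making precise that ``entering $\Oc'$'' can be treated exactly like ``escaping to infinity'' in the partition-of-unity patching of \cite{gerardm88}, and checking that this patching keeps $\{p,x\cdot\x+r\}$ strictly positive off $\overline{\Oc'}$. The semiclassical commutator expansion, the virial estimate for large $|x|$, the G\r{a}rding/functional-calculus step, and the passage from $p\inv(E)$ to $p\inv(J)$ are all routine.
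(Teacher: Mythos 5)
Your overall plan is the right one and coincides in outline with the paper's: reduce to a symbolic inequality $\{p,x.\x+r\}+C_V V_2\geq 2c_0$ on a neighbourhood of $p\inv(J)$, handle large $\abs x$ by the virial term $2\x^2-x.\nabla V_1$, exploit a uniform reaching time on the trapped set, and conclude by G\aa rding and functional calculus. The commutator expansion, the inequality $\nu(h)\geq h\tilde\nu(h)$, the lower-semicontinuity argument producing $\eta_0$, and the passage from $p\inv(E)$ to a neighbourhood of $p\inv(J)$ are all correctly in place.

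The step you yourself flag as ``the genuine obstacle'' is, however, exactly where the content of this proposition lies, and your proposal does not supply it. In \cite{gerardm88} the escape function is built under the non-trapping hypothesis $\O_b(\{E\})=\emptyset$; there is nothing there to invoke that treats a damping region ``like infinity'', so the sentence ``carry it out as in \cite{gerardm88}'' is not a proof. The missing device is the local escape function $f_w(z)=\int_0^{T_w}g_w(\vf^{-t}(z))\,dt$, built for each $w\in\O_b(\bar{J_3})$ from a time $T_w$ with $\vf^{T_w}(w)\in\Oc$, a small neighbourhood $\Vc_w$ of $w$ with $\vf^{T_w}(\Vc_w)\subset\Oc_{\g_w}$, and a bump $g_w\in C_0^\infty(\Vc_w,[0,1])$ equal to $1$ near $w$. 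The crux is the identity $\{p,f_w\}=g_w-g_w\circ\vf^{-T_w}$: the first term is a nonnegative bump equal to $1$ near $w$, while the negative second term has support inside $\Oc_{\g_w}$, precisely where $C_V V_2$ can dominate it. Summing over a finite cover of the compact $\O_b(\bar{J_3})$, adding $b$ for large $\abs x$ and the genuine G\'erard--Martinez functions $f_\pm=\mp\int_0^\infty g_\pm\circ\vf^{\pm t}\,dt$ on the non-trapped part of $p\inv(\bar{J_2})\cap B_x(R+1)$, and gluing with a cutoff $\z$, yields the symbolic estimate. Without this construction, your target inequality $\{p,x.\x+r\}>0$ on $\Vc\setminus\overline{\Oc'}$ is asserted, not proved.
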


\begin{proof}
\noindent {\bf 1.} We first remark that the assumption on bounded trajectories can be extended to a neighborhood of $E$: there exists $\e \in \left]0,E/{12}\right]$ such that any bounded trajectory of energy in $[E- 3\e, E + 3 \e]$ meets $\Oc$. Indeed, assume that for any $n \in \N$ we can find $w_n$ in the compact set $\O_b([E/2,2E])$ such that $p(w_n) \to E$ and $w_n \notin \Oc^\vf$ where:
\[
\Oc^\vf = \bigcup_{t\in\R} \vf^{-t} (\Oc)
\]
Maybe after extracting a subsequence we can assume that $w_n \to w \in \O_b([E/2,2E])$. As $p$ is continuous, we have $p(w) = E$, and hence $w \in\Oc^\vf$ which is open. This gives a contradiction. We set $J= ]E-\e,E+\e[$, $J_2=]E-2\e,E+2\e[$ and $J_3 = ]E-3\e,E+ 3\e[$.\\

\noindent {\bf 2.}
Let $R\geq R_0(\bar J _3)$ (given in proposition \ref{prop-flot}) so large that $\O_b(\bar J_3) \subset B_x(R)$, where $B_x(R) = \{ (x,\x) \in \R^{2n} \tqe \abs x < R\}$, and:
\[
\abs {2V_1(x) + x.\nabla V_1(x)} \leq \frac E2 \quad  \text{when }\abs x \geq R 
\]
Let $b \in C^\infty(\R^n)$ equal to $x.\x$ outside $B_x(R+1)$ and zero in a neighborhood of $\bar B_x(R)$. Then, if $p(x,\x) \in J_3$ and $\abs x \geq R+1$ we have:
\begin{equation} \label{minor1}
\{ p , b \} (x,\x) = 2 p(x,\x) - 2 V_1(x) - x.\nabla V_1(x) \geq E
\end{equation}
and $\{p,b\} = 0$ in $B_x(R)$.\\

\noindent {\bf 3.}
Let $w \in \O_b(\bar {J_3})$ and $T_w \in\R$ such that $\vf^{T_w}(w) \in \Oc$. As $\vf^{T_w}$ is continuous, we can find $\g_w > 0$ and an open neighborhood $\Vc_w$ of $w$ in $\R^{2n}$ such that for any $z \in \Vc_w$ we have $\vf^{T_w}(z) \in \Oc_{\g_w}$ where $\Oc_\g$ stands for $\{ (x,\x) \in \R^{2n} \tqe V_2 (x) > \g\}$. Let $\Uc_w$ be another neighborhood of $w$ with $\bar {\Uc_w} \subset \Vc_w$, $g_w \in C_0^\infty(\R^{2n},[0,1])$ be supported in $\Vc_w$ and equal to 1 on $\Uc_w$, and $f \in C^\infty(\R^{2n})$ defined for $z \in \R^{2n}$ by:
\[
f_w(z) = \int_0^{T_w} g_w(\vf^{-t}(z))\, dt
\]
$f_w$ has been chosen to satisfy:
\[
\begin{aligned}
\{p,f_w\} (z)
& = \int_0^{T_w} \{p, g_w \circ \vf^{-t}\} (z) \, dt = - \int_0^{T_w} \frac d {dt} g_w(\vf^{-t}(z)) \, dt\\
& = g_w(z) - g_w(\vf^{-{T_w}}(z)) 
\end{aligned}
\]
The first term is supported in $\Vc_w$, nonnegative and equal to 1 on $\Uc_w$ while the support of the second is in $\vf^{T_w} (\Vc_w) \subset \Oc_{\g_w}$. In particular $\{p,f_w\}$ is compactly supported, nonnegative outside $\Oc_{\g_w}$ and equal to 1 in $\Uc_w \setminus \Oc_{\g_w}$.

As $\O_b(\bar {J_3})$ is compact, we can find $w_1,\dots,w_N \in \O_b(\bar {J_3})$ for some $N\in\N$ such that $\O_b(\bar {J_3}) \subset \Uc := \cup_{j=1}^N \Uc_{w_j}$. Let $ \g = \min_{1\leq j \leq N} \g_{w_j}$ and $f = \sum_{j=1}^N f_{w_j}$. Then $\{p, f\}$ is compactly supported, nonnegative outside $\Oc_\g$ and greater than or equal to 1 in $\Uc \setminus \Oc_\g$.\\

\noindent {\bf 4.}
We can find a constant $C_V \geq 0$ such that $\{ p , f \} + C_V V_2 \geq 1$ on $\Oc_\g$, so that $\{ p,f\}  + C_V V_2$ is nonnegative on $\R^{2n}$ and at least 1 on $\Uc$.\\

\noindent {\bf 5.}
Let:
\[
\Uc_\pm = \O_\infty^\pm({J_3}) \cap B_x(R+2)   
\]
We have:
\[
\Uc_+ \cup \Uc_- \cup  \Uc \cup   p\inv(\R \setminus \bar J_2) \cup \left(\R^{2n} \setminus \bar B_x(R+1)\right) = \R^{2n} 
\]
Considering a partition of unity for this open cover of $\R^{2n}$ provides two functions $g_\pm \in C_0^\infty(\R^{2n},[0,1])$ supported in $\Uc_\pm$ such that $g_\infty = g_+ + g_-$ is equal to 1 in a neighborhood of the compact set:
\[
K_\infty = p\inv(\bar J_2) \cap \bar B_x(R+1) \setminus \Uc
\]

There exists $T \geq 0$ such that for any $w \in \R^{2n}$ we can find a neighborhood $\Vc$ of $w$ and $\t_{\pm} \geq 0$ such that for any $v \in \Vc$ and $t\geq 0$ we have:
\[
0 \leq g_\pm (\vf^{\pm t}(v)) \leq \1 {[\t_{\pm},T+\t_{\pm} ]}(t)
\]
%
%
As a consequence the functions:
\[
f_\pm = \mp \int_0^{+\infty} (g_\pm \circ \vf^{\pm t}) \, dt
\]
are well-defined, bounded (by $T$) and $C^\infty$ on $\R^{2n}$. The same calculation as for $f$ shows that $\{p,f_\pm\} = g_\pm \geq 0$. Hence we can find a constant $C_\infty \geq 0$ such that for $f_\infty = f_+ + f_-$ we have:
\begin{equation}  \label{minor-fin}
\{ p, b + C_\infty f_\infty \} \geq  E \quad \text{on } K_\infty
\end{equation}
and we already know that $\{p , b + C_\infty f_\infty\} \geq \{p,b\}$ is nonnegative on $p\inv(\bar J_2) \setminus K_\infty$.

\noindent {\bf 6.}
Let $\z \in C_0^\infty(\R^n)$ equal to 1 on $B(R+2)$. Since we can replace $\z$ by $x \mapsto \z(\m x)$ with $\m$ small enough, we can assume that:
\[
\nr {C_\infty f \{ p,\z\} } _{L^\infty(p\inv(J_2))} \leq  2 C_\infty T \sup_{p\inv(J_2)} \abs{\x . \nabla \z(x)} \leq \frac E 2 
\]
With \eqref{minor1} and \eqref{minor-fin} this gives:
\begin{equation}\label{minor-symb}
\{p,b+ \z f_\infty \} \geq \frac E 2 \quad \text{on } p\inv(J_2) \setminus \Uc
\end{equation}
and $\{p,b+ \z f_\infty \}$ is still nonnegative on $p\inv(J_2)$ since $\nabla \z = 0$ on $\Uc$.
Taking $r(x,\x) =  x.\x - b(x,\x) +   C_\infty \z f_\infty +    f$ then $r \in C_0^\infty(\R^{2n})$ and:
\[
\{ p , x.\x + r \} + C_V  V_2 \geq 2 c_0  \quad \text{on } p\inv(J_2) \text{ with } 2 c_0 = \min \left( 1 , \frac E 2 \right)
\]

\noindent {\bf 7.}
Let $F_h = A_h + \Opw (r) = \Opw (x.\x + r )$. The principal symbol of the operator $ih\inv [H_ {1,h},F_h]$ is $\{p, x.\x + r\}$. Let $\h \in C_0^\infty (\R)$ supported in $J_2$ and equal to 1 on $J$. By \cite{robert} or \cite{helfferr83} the operator $\h(\hun)$ is a pseudo-differential operator of principal symbol $\h \circ p$. As a consequence the principal symbol of the operator:
\[
\frac i h \h(\hun) [\hun,F_h]\h(\hun) + C_V V_2 - 2c_0 \h(\hun)^2
\]
is nonnegative, so by G\aa rding inequality (see theorem 4.27 in \cite{evansz}) there is $C\geq 0$ such that, after multiplication by $h \tilde \nu(h)$:
\[
\h(\hun) i\left[\hun,\tilde \nu (h) F_h \right]\h(\hun) + h \tilde \nu(h)  C_V  V_2 \geq  2h\tilde \nu (h) c_0 \h(\hun)^2 - C h^2 \tilde \nu(h)
\]
Taking $h$ small enough and multiplying by $\1 J (\hun)$ on both sides give:
\[
\1 J (\hun) \left( i\left[\hun,\tilde \nu (h) F_h \right] +  h \tilde \nu (h) C_V V_2 \right) \1 J (\hun)  \geq h \tilde \nu (h) c_0 \1 J (\hun)^2
\]
Then $(\nu(h) - h \tilde \nu(h)) \1 J (\hun) C_V V_2\1 J (\hun) \geq 0$ so we have:
\[
\1 J (\hun) \left( i\left[\hun, \tilde \nu (h) F_h \right] + C_V  \nu (h) V_2 \right) \1 J (\hun)  \geq  h \tilde \nu (h) c_0 \1 J (\hun)^2
\]
which is the Mourre estimate we need.
Note that if $E$ is non-trapping we can take $f=0$, $C_V = 0$, and use the estimate:
\[
\1 J (\hun)  i[\hun,F_h]  \1 J (\hun)  \geq  h c_0 \1 J (\hun)^2
\]
even if $h \geq \nu(h)$, which justifies remark \ref{remarque-non-captif}.
\end{proof}

This proposition shows that for any closed subinterval $I$ of $J$ theorem \ref{th-schrodinger} is true with $\pppg {\tilde \nu(h) F_h}^{-s}$ instead of $\pppg x ^{-s}$. The operator $\pppg {\tilde \nu(h) F_h}^s \pppg {\tilde \nu (h) A_h}^{-s}$ is bounded uniformly in $h$ (this is true for $s=0$ and $s=1$ hence for any $s \in [0,1]$ by complex interpolation), so conclusions of theorem \ref{th-schrodinger} are valid with $\pppg {\tilde \nu (h) A_h}^{-s}$. Now write:
\[
(H_h-z)\inv = (H_h-i)\inv - (z-i) (H_h-i)^{-2} + (z-i)^2 (H_h-i)\inv (H_h-z)\inv (H_h-i)\inv
\]
Since $\pppg {\tilde \nu(h) A_h}^{s}(H_h-i)\inv \pppg x ^{-s}$ is uniformly bounded (see \cite[lemma 8.2] {perryss81}), this gives:
\begin{eqnarray}
 \nonumber\lefteqn{ \nr{\pppg x ^{-s} (H_h-z)\inv \pppg x ^{-s}}}\\
\label{devhi} && \leq c + c\nr{\pppg x ^{-s} (H_h-i)\inv (H_h-z)\inv (H_h-i)\inv \pppg x ^{-s}}\\
\nonumber && \leq c + c\nr {\pppg x ^{-s} (H_h-i)\inv \pppg {\tilde \nu(h) A_h}^{s}} \nr{\pppg {\tilde \nu(h) A_h}^{-s} (H_h-z)\inv \pppg {\tilde \nu(h) A_h}^{-s}}\\
\nonumber && \hspace{1.5cm} \times \nr{\pppg {\tilde \nu(h) A_h}^{s}(H_h-i)\inv \pppg x ^{-s}} \\
\nonumber && \leq \frac c {h\tilde \nu(h)}
\end{eqnarray}
where $c$ does not depend on $z \in \C_{I,+}$ with $\Im z \leq 1$. This is \eqref{unif-schro}. Then \eqref{estim-diff} and hence existence of the limit \eqref{limite} follow similarly.

\section{Necessity of the condition on trapped trajectories}

We consider in this section the operator $H_h = -h^2 \D + V_1 -ihV_2$ we introduced to study the Helmholtz equation. We prove that our assumption that every bounded trajectory of energy $E$ should meet the open set $\Oc$ is actually necessary in order to have the uniform estimates and the limiting absorption principle as in theorem \ref{th-schrodinger}. When $V_2 = 0$, this is proved in \cite{wang87}.

\begin{theorem} \label{th-reciproque}
Assume that for some $s \in \left] \frac 12 , \frac {1+\rho}2\right[$ ($\rho > 0$ given by \eqref{estimV}), there exists $\e,h_0 > 0$ such that the limit:
\[
\pppg x ^{-s} (H_h-(\l+i0))\inv \pppg x ^{-s}
\]
exists for all $\l \in J = ]E-\e,E+\e[$ and $h\in ]0,h_0]$ with the estimates:
\[
\nr {\pppg x ^{-s} (H_h-z)\inv \pppg x ^{-s}} \leq \frac c h
\]
uniformly in $z \in \C_{J,+}$ and $h\in ]0,h_0]$, then every bounded trajectory of energy $E$ goes through $\Oc$.
\end{theorem}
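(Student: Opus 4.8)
The strategy is to argue by contraposition: I assume there is a bounded trajectory of energy $E$ that never meets $\Oc$, and I produce a sequence of quasimodes concentrated along that trajectory which violate the uniform resolvent bound $O(h^{-1})$. The natural tool, as announced in the introduction, is a \emph{selfadjoint dilation} of the dissipative operator $H_h$: one enlarges the Hilbert space to $\Hc \oplus L^2(\R_\x,\Kc)$ (with $\Kc$ an auxiliary space carrying $\sqrt{2hV_2}$) so that $H_h$ becomes the compression of a genuine selfadjoint operator $\mathcal{H}_h$, and the resolvent $(H_h-z)\inv$ for $\Im z>0$ is the compression of $(\mathcal{H}_h-z)\inv$. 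The point of passing to $\mathcal{H}_h$ is that it is selfadjoint, so its propagator $e^{-it\mathcal{H}_h/h}$ is unitary and one can run a propagation/Egorov argument; the limiting absorption estimate for $H_h$ then transfers to a statement about $(\mathcal{H}_h-(\l+i0))\inv$ sandwiched by $\pppg x^{-s}$ and the projection onto the physical space.

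\smallskip
\noindent\textbf{Key steps.} First I would set up the minimal selfadjoint dilation $\mathcal{H}_h$ of $H_h=\hun-ih V_2$ explicitly, record that $\1{\Hc}(\mathcal{H}_h-z)\inv\1{\Hc}=(H_h-z)\inv$ for $\Im z>0$, and conclude from the hypothesis that $\pppg x^{-s}(\mathcal{H}_h-(\l+i0))\inv\pppg x^{-s}$ (compressed to $\Hc$) is bounded by $c/h$ uniformly for $\l\in J$, $h\in]0,h_0]$. Second, I would translate this into a spectral/propagation estimate: a uniform limiting absorption bound of this type gives, via the standard Fourier-transform-in-time identity $\pppg x^{-s}\mathbf{1}_J(\mathcal{H}_h)e^{-it\mathcal{H}_h/h}\pppg x^{-s}$ being integrable in $t$ up to the $1/h$ loss, a bound on the local energy decay / a bound of the form $\int \nr{\pppg x^{-s} e^{-it\mathcal{H}_h/h}\chi(\mathcal{H}_h)\psi}^2\,dt \lesssim h\inv\nr\psi^2$ for $\psi$ supported near $E$. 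Third, I would pick an initial state $\psi_h=\Opw(a)\delta_{(x_0,\x_0)}$-type coherent/WKB packet microlocalized at a point $w_0=(x_0,\x_0)$ on the bounded non-trapped-by-$\Oc$ trajectory (with $p(w_0)=E$), and apply a \emph{dissipative Egorov theorem}: because the trajectory $\vf^t(w_0)$ stays in a compact set and avoids $\Oc=\{V_2>0\}$ for all time, the damping term never acts on the packet, so $e^{-it\mathcal{H}_h/h}\psi_h$ stays (microlocally, to leading order, uniformly in $t$ on compact time intervals and with only polynomial-in-$t$ spreading) equal to the free transported packet, in particular it stays in a fixed compact region of $x$-space and keeps $L^2$-norm bounded below. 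Fourth, integrating the lower bound $\nr{\pppg x^{-s}e^{-it\mathcal{H}_h/h}\psi_h}\gtrsim \nr{\psi_h}$ over a time interval of length $\sim h^{-1}$ (made legitimate because boundedness of the trajectory, compactness of $\O_b$, and the Egorov control let the approximation persist that long, up to using finitely many local charts as in step 3 of the previous proposition) contradicts the $O(h\inv)$ bound from step 2 — one gets $h^{-1}\lesssim$ something like $h\inv$ but with the wrong constant, i.e. the lower bound grows faster in the relevant regime. The precise bookkeeping of how long the Egorov approximation survives versus how much time the resolvent bound allows is what forces the quantitative contradiction; the restriction $s<\frac{1+\rho}2$ enters here to control the error terms coming from $V_1$ and from $\pppg x^{-s}$ against the spreading of the packet.

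\smallskip
\noindent\textbf{Main obstacle.} The hard part is the dissipative Egorov theorem with uniform-in-$h$ and long-time (polynomially or up to $\log(1/h)$) control: on the enlarged space $\mathcal{H}_h$ is selfadjoint but its symbol is not scalar (it couples $\Hc$ to the auxiliary channel through $\sqrt{V_2}$), so one must show that as long as the classical trajectory avoids $\{V_2>0\}$ the coupling stays negligible and the evolution is governed purely by the flow $\vf^t$ of $p$; quantifying ``negligible'' uniformly in $h$ and over times of order $h^{-1}$, while keeping track that the trajectory is merely bounded (not periodic, so no exact recurrence to exploit) is the technical heart. A secondary difficulty is ensuring the packet's $x$-support stays inside the region where the weight $\pppg x^{-s}$ is bounded below by a fixed constant — this is exactly where boundedness of the trajectory and compactness of $\O_b(\bar J)$ from Proposition~\ref{prop-flot} are used — and arranging the finite cover of $\O_b(\bar J)$ so that the lower bound holds simultaneously for all the relevant initial points.
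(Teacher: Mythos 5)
Your overall plan --- selfadjoint dilation to access propagation estimates, dissipative Egorov, coherent states microlocalized on the offending bounded trajectory --- is the right circle of ideas, and it is recognisably what the paper does. But several steps are off in ways that matter.

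\textbf{Where the dilation actually enters.} You want to prove a dissipative Egorov theorem \emph{on the enlarged space} for $\mathcal H_h$, and you correctly flag that coupling to the auxiliary channel as the main obstacle. That obstacle is self-inflicted: the paper states and proves the Egorov theorem directly for the dissipative propagator $U_h(t)=e^{-itH_h/h}$ on $L^2(\R^n)$. Because $(U_h(t))_{t\geq 0}$ is a contraction semigroup and $V_2$ is a bounded symbol, the usual Egorov proof goes through verbatim and yields the exact transported symbol with an explicit damping factor $q(t,w)=\exp(-2\int_0^t V_2(\vf^s(w))\,ds)$ and an $O(h)$ remainder. The selfadjoint dilation $K_h$ is used for something else and only once: to invoke the Kato smoothness criterion (Theorem~XIII.25 in Reed--Simon) for the operator $\pppg x^{-s}\h(\hun)$ relative to $K_h/h$, which turns the assumed resolvent bound into an $L^2_t$-propagation estimate. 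You never run pseudodifferential calculus on $\Kc$.

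\textbf{The $h$-dependence of the propagation estimate.} You write that the smoothness bound gives $\int\nr{\pppg x^{-s}e^{-it\mathcal H_h/h}\chi\psi}^2\,dt\lesssim h^{-1}\nr\psi^2$. It does not: the $h^{-1}$ from $\nr{\pppg x^{-s}(H_h-z)^{-1}\pppg x^{-s}}\leq c/h$ is exactly cancelled by the rescaling $t\mapsto t/h$ in the exponent, so the paper's Proposition~\ref{prop-U} is $\int_0^{\infty}\nr{\pppg x^{-s}\h(\hun)U_h(t)\p}^2\,dt\leq C_\h\nr\p^2$ \emph{uniformly in $h$}, with no $h^{-1}$. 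This changes the bookkeeping of your proposed contradiction entirely.

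\textbf{The contradiction mechanism.} You want to integrate a coherent-state lower bound over a time window of length $\sim h^{-1}$. That would require Egorov-type control up to times of order $h^{-1}$, which you acknowledge is the ``technical heart,'' and in fact it is not available without extra structure. The paper avoids long-time semiclassics altogether. It fixes $T$, uses the dilation-generator identity
\[
\hun U_1^h(T) = \frac 1{2T}\Big([A_h,U_1^h(T)] + \int_0^T U_1^h(T-t)\,W(x)\,U_1^h(t)\,dt\Big),\qquad W=-2V_1-x\cdot\nabla V_1,
\]
to write $\pppg x^{-s}Q_h(T)\h(\hun)U_1^h(T)Q_h(T)\pppg x^{-s}$ as a $T^{-1}\times(\text{uniformly bounded})$ term plus a $T^{-1}\times F_h(T)$ term, then uses the (Egorov-converted) $L^2_t$ estimates and the decay of $W$ to bound $\nr{F_h(T)}\leq c\,T^{1-\d}$ with $\d=\min(1+\rho-2s,\rho)>0$. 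The net result is $\nr{\pppg x^{-s}Q_h(T)\h(\hun)U_1^h(T)Q_h(T)\pppg x^{-s}}\leq cT^{-\d}$ uniformly in $h\in]0,h_T]$. One then lets $h\to 0$ \emph{at fixed $T$}, using Wang's coherent-state localization $W_h(z,\z)$, to extract the pointwise classical inequality
\[
\pppg z^{-s}\,q_1(T,z,\z)\,q_1(T,\vf^T(z,\z))\,\pppg{\bar x(T,z,\z)}^{-s}\leq cT^{-\d},
\]
and only afterwards lets $T\to\infty$; since $\bar x(T,z,\z)$ stays bounded, $q_1$ is forced to vanish along the trajectory, i.e. the trajectory meets $\Oc$. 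This $h\to 0$ then $T\to\infty$ order of limits is precisely what makes the long-time Egorov issue you worried about moot, and it is where the restriction $s<\frac{1+\rho}{2}$ (equivalently $\d>0$) is used --- not, as you suggest, to control packet spreading.

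In short: your sketch correctly identifies the two tools (dilation and dissipative Egorov) but misplaces the Egorov argument on the dilated space, gets the $h$-dependence of the smoothness estimate wrong, misses the commutator identity with the generator of dilations that produces the crucial $T^{-\d}$ rate, and proposes a long-time quasimode contradiction in place of the fixed-$T$, then $h\to 0$, then $T\to\infty$ argument that the proof actually uses.
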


To prove this theorem we use the contraction semigroup generated by $H_h$ (given by Hille-Yosida theorem, see for instance theorem 3.5 in \cite{engel2}):
\[
U_h(t) = e^{-\frac {it} h H_h}, \quad t \geq 0
\]
We first need a dissipative version of the Egorov theorem. Let $q \in C^\infty(\R_+ \times \R^{2n})$ be defined by:
\[
q(t,w) = \exp \left( -2\int_{0}^t V_2(\vf^s(w))\, ds \right)
\]
(where $V_2(x,\x)$ means $V_2(x)$ for $(x,\x) \in \R^{2n}$).

\begin{theorem} \label{th-egorov}
Let $a \in C^\infty(\R^{2n})$ be a symbol whose derivatives are bounded (in $L^\infty(\R^n)$). Then for all $t \geq 0$ we have:
\begin{equation} \label{egorov}
U_h(t)^* \Opw(a) U_h(t) = \Opw \left( (a\circ \vf^t) q(t) \right) + h R(t,h)
\end{equation}
where $R$ is bounded in $\Lc(L^2(\R^n))$ uniformly in $h \in ]0,1]$ and $t$ in a compact subset of $\R_+$. 
\end{theorem}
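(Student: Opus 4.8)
The plan is to prove the dissipative Egorov theorem (Theorem \ref{th-egorov}) by the standard Heisenberg-picture differentiation argument, adapted to account for the non-unitarity of $U_h(t)$. First I would introduce the time-dependent symbol $a_t = (a\circ\vf^t)\, q(t)$ and the operator $B_h(t) = U_h(t)^* \Opw(a_t) U_h(t)$, and compare it with $\Opw(a_t)$ by computing the derivative in $t$ of the difference. Since $U_h(t) = e^{-\frac{it}h H_h}$ is generated by $-\frac ih H_h$ with $H_h = \hun - ih V_2$, the semigroup is strongly differentiable on $\Dom_H$ and one gets, at least formally,
\[
\frac{d}{dt}\, U_h(t)^* \Opw(a_t) U_h(t) = U_h(t)^*\left( \frac ih [H_h, \Opw(a_t)] + \Opw(\partial_t a_t) \right) U_h(t).
\]
The imaginary part of $H_h$ splits this commutator into $\frac ih[\hun,\Opw(a_t)]$ plus the term $\frac 1h\{V_2,\Opw(a_t)\}$ coming from $-\frac ih(-ihV_2)\Opw(a_t) + \text{h.c.} = -V_2\Opw(a_t) - \Opw(a_t)V_2$; note this last contribution is of order $h^0$, not $h^{-1}$, because the $h^{-1}$ cancels against the $h$ in $-ihV_2$ — more precisely $-\frac ih(H_h - \hun) = -V_2$, which is bounded.

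The next step is to invoke the semiclassical symbolic calculus: $\frac ih[\hun,\Opw(a_t)] = \Opw(\{p,a_t\}) + h\,\Opw(r_1(t))$ with $r_1(t)$ having bounded derivatives uniformly for $t$ in a compact set, and $-V_2\Opw(a_t) - \Opw(a_t)V_2 = -2\Opw(V_2 a_t) + h\,\Opw(r_2(t))$. Combining, the $h^0$-terms are $\Opw\big(\{p,a_t\} - 2V_2 a_t + \partial_t a_t\big)$, and by the defining ODE for the flow and for $q$ — namely $\partial_t(a\circ\vf^t) = \{p, a\circ\vf^t\}$ from Proposition \ref{prop-flot}(i), and $\partial_t q(t,w) = -2V_2(\vf^t(w))\, q(t,w)$ — one checks that $\partial_t a_t = \{p,a_t\} - 2V_2 a_t$ vanishes identically. (Here one uses that $\{p, a\circ\vf^t\cdot q(t)\} = q(t)\{p,a\circ\vf^t\} + (a\circ\vf^t)\{p,q(t)\}$ and that $\{p,q(t)\}\neq\partial_t q(t)$ in general, so the cancellation must be verified carefully: the cleanest route is to transport everything by the flow, writing $a_t\circ\vf^{-t} = a\cdot q(t)\circ\vf^{-t}$ and differentiating.) Thus $\frac{d}{dt} B_h(t) = h\, U_h(t)^*\Opw(r_1(t)+r_2(t)) U_h(t)$, and since $\nr{U_h(t)} \leq 1$ (contraction semigroup) and the $\Opw(r_j(t))$ are bounded in $\Lc(L^2)$ uniformly over compact $t$-intervals by Calderón–Vaillancourt, integrating from $0$ to $t$ and using $B_h(0) = \Opw(a)$ gives \eqref{egorov} with $R(t,h) = \int_0^t U_h(\sigma)^*\Opw(r_1(\sigma)+r_2(\sigma))U_h(\sigma)\, d\sigma$, bounded uniformly in $h\in]0,1]$ and $t$ in a compact subset of $\R_+$.

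The main obstacle is making the formal differentiation rigorous: $\Opw(a_t)$ does not preserve $\Dom_H$, and $U_h(t)$ is only a contraction semigroup (no group, no unitarity), so one cannot freely move it around. The standard fix is to work with $\Opw(a_t)$ sandwiched against $(H_h\pm i)$-resolvents, or to regularize $a$ and pass to the limit, and to justify that $t\mapsto U_h(t)^*\Opw(a_t)U_h(t)$ is differentiable in the strong operator topology on a suitable dense domain; the adjoint semigroup $U_h(t)^*$ is generated by $-\frac ih H_h^* = -\frac ih(\hun + ihV_2)$, which is again a contraction semigroup since $V_2 \geq 0$, so both one-sided derivatives behave well. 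A secondary technical point is uniformity in $t$ on compact sets of the remainder symbols $r_j(t)$, which follows from smooth dependence of $\vf^t$ and $q(t)$ on $t$ together with the boundedness of all derivatives of $V_1$ (from \eqref{estimV}) and of $V_2$; one should also note that $a_t$ itself has bounded derivatives uniformly on compact $t$-intervals, which is needed to apply the symbolic calculus with uniform control.
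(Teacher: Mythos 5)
Your broad strategy — interpolate between the two sides of \eqref{egorov} by a time-dependent symbol, differentiate the resulting one-parameter family, and show the leading Weyl symbol of the derivative cancels — is exactly the paper's, and the computation of $\{p,q(t)\}$ and of $\partial_t a_t$ that you outline is correct. But the choice of interpolating family is wrong in two linked ways, and the argument as written does not prove the theorem.

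First, with $a_\tau = (a\circ\vf^\tau)q(\tau)$ and $B_h(\tau) = U_h(\tau)^*\Opw(a_\tau)U_h(\tau)$ you have $B_h(0)=\Opw(a)$ and $B_h(t) = U_h(t)^*\Opw\big((a\circ\vf^t)q(t)\big)U_h(t)$. Even if you could show $B_h(t)-B_h(0)=O(h)$, that would establish $U_h(t)^*\Opw\big((a\circ\vf^t)q(t)\big)U_h(t) = \Opw(a) + O(h)$, which is not the statement of the theorem: the roles of the ``raw'' symbol $a$ and the transported symbol $(a\circ\vf^t)q(t)$ are swapped. For a unitary group one could conjugate to switch them, but here $U_h(t)$ is only a contraction semigroup, so the two statements are genuinely different. (In fact, applying the theorem to $b := (a\circ\vf^t)q(t)$ shows $U_h(t)^*\Opw(b)U_h(t)\approx\Opw\big((a\circ\vf^{2t})q(2t)\big)$, not $\Opw(a)$.)

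Second, and more fatally, the leading-order cancellation you claim does not occur. The correct Heisenberg derivative uses the adjoint generator, so
\[
\frac{d}{d\tau}\big[U_h(\tau)^*\Opw(a_\tau)U_h(\tau)\big]
= U_h(\tau)^*\Big[\tfrac{i}{h}[\hun,\Opw(a_\tau)] - V_2\Opw(a_\tau) - \Opw(a_\tau)V_2 + \Opw(\partial_\tau a_\tau)\Big]U_h(\tau),
\]
whose leading symbol is $\{p,a_\tau\} - 2V_2 a_\tau + \partial_\tau a_\tau$. Your transport equation reads $\partial_\tau a_\tau = \{p,a_\tau\} - 2V_2 a_\tau$ (this is what your computation actually shows; the phrase ``$\partial_t a_t = \{p,a_t\}-2V_2a_t$ vanishes identically'' confuses the ODE with the vanishing of the total expression). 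Substituting gives leading symbol $2\big(\{p,a_\tau\} - 2V_2 a_\tau\big)$, which does \emph{not} vanish. The interpolating symbol must satisfy the backward transport equation $\partial_\tau(\cdot) = -\{p,\cdot\} + 2V_2(\cdot)$ for the cancellation to work. This is exactly what the paper's choice achieves: it sets $\tilde a(\tau,w) = a(\vf^{t-\tau}(w))\,e^{S(\tau,w)}$ with $S(\tau,w)=-2\int_\tau^t V_2(\vf^{s-\tau}(w))\,ds$, so that $\tilde a(t)=a$, $\tilde a(0)=(a\circ\vf^t)q(t)$, and $\partial_\tau\tilde a = 2V_2\tilde a - \{p,\tilde a\}$; then $B_h(\tau)=U_h(\tau)^*\Opw(\tilde a(\tau))U_h(\tau)$ has $B_h(t)-B_h(0)$ equal to the quantity in \eqref{egorov} and its derivative has vanishing leading symbol. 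To repair your argument you should replace $a_\tau$ by the paper's $\tilde a(\tau)$ (or equivalently use $C_h(\tau)=U_h(t-\tau)^*\Opw(a_\tau)U_h(t-\tau)$, which reproduces the same sign); the rest of your technical remarks about differentiability, Calder\'on--Vaillancourt, and uniformity on compact $t$-intervals are then apt.
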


\begin{remark}
More precisely, we prove that there exists a family $(b(\t,h))_{\t \geq 0}$ of classical symbols with bounded derivatives (uniformly for $\t$ in a compact subset of $\R_+$) such that for all $t \geq 0$:
\begin{equation} \label{explicit-reste}
R(t,h) = \int_{0}^t U_h(\t)^* \Opw(b(\t,h)) U_h(\t)\, d\t
\end{equation}
\end{remark}

\begin{remark} \label{rem-egorov}
If we replace one of the $U_h(t)$ by $U_1^h(t) = e^{-\frac{it}h \hun}$ in the left-hand side of \eqref{egorov} then we have to replace $q$ by 
\begin{equation}\label{def-q1}
q_1 : (x,\x) \mapsto \exp \left( -\int_{0}^t V_2(\vf^s(w))\, ds \right)
\end{equation}
in the right-hand side (with the two occurences of $U_h(t)$ replaced by $U_1^h(t)$ and $q$ replaced by 1, theorem \ref{th-egorov} is just the usual Egorov theorem). 
\end{remark}

\begin{proof} 
We follow the proof of the usual Egorov theorem (see for instance \cite[§~IV.4]{robert}). Let $t \geq 0$. For $\t \in [0,t]$ and $w \in \R^{2n}$ write: 
\[
\tilde a(\t,w) = a(\vf^{t-\t}(w)) \exp (S(\t,w)) \quad \text{where} \quad S(\t,w) = -2\int_\t^t V_2(\vf^{s-\t}(w)) \, ds
\]
and: 
\[
B_h( \t) = U_h(\t)^* \Opw(\tilde a(\t)) U_h(\t)
\]
so that the estimate we have to prove is: $B_h(t) - B_h(0) = O(h)$ in $\Lc(L^2(\R^n))$.
We have:
\[
\begin{aligned}
\partial_\t \tilde a(\t)
& = -\{ p , a\circ \vf^{t-\t} \} \exp (S(\t)) + 2 \left( V_2 + \int_\t^t \{ p, V_2 \circ \vf^{s-\t} \}  \, ds \right) \tilde a(\t) \\
& = -\{ p , a\circ \vf^{t-\t} \} \exp (S(\t)) + 2 V_2 \tilde a(\t) -  \left \{ p,S(\t) \right \}    \tilde a(\t) \\
& = 2 V_2 \tilde a(\t) - \{ p ,\tilde  a(\t) \} 
\end{aligned}
\]
The function $\t \mapsto B_h(\t)$ is of class $C^1$ in the weak sense and:
\[
B'_h(\t) = U_h(\t)^* \tilde B_h (\t) U_h(\t)
\]
with:
\[
\begin{aligned}
\tilde B_h(\t)
& = \frac ih [\hun,\Opw(\tilde a(\t))] -V_2 \Opw(\tilde a(\t)) - \Opw(\tilde a(\t)) V_2 + \Opw (\partial_\t\tilde a(\t))\\
& = \Opw (c(\t,h)) 
\end{aligned}
\]
for some classical symbol $c(\t,h) = \sum_{j\in\N} h^j c_j(\t)$, and in particular:
\[
c_0(\t) = \{p,\tilde a(\t)\} - V_2 \tilde a(\t) - \tilde a(\t)V_2 + \partial_\t \tilde a(\t) = 0
\]
Setting $b = h\inv c$ we get \eqref{egorov}-\eqref{explicit-reste}, in the weak sense and hence in $\Lc(L^2(\R^n))$.
\end{proof}

\begin{proposition} \label{prop-rec-1}
Assume that the assumptions of theorem \ref{th-reciproque} are satisfied. Then for any $\h \in C_0^\infty(\R)$ supported in $J$ there exists $c\geq 0$ such that for all $h\in ]0,h_0]$ and $z \in \C_+$ we have:
\begin{equation} \label{estim-h-res}
\nr{\pppg x ^{-s} \h(\hun) (H_h-z)\inv \h(\hun) \pppg x ^{-s}} \leq \frac c {h}
\end{equation}
\end{proposition}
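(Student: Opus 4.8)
The plan is to decompose $\C_+$ according to the distance from $\Re z$ to $\supp\h$. Since $\supp\h$ is a compact subset of the open interval $J$, fix $\d_0>0$ small enough that $\{\l\in\R\tqe \mathrm{dist}(\l,\supp\h)\leq\d_0\}\subset J$, and treat separately the cases $\mathrm{dist}(\Re z,\supp\h)\leq\d_0$ and $\mathrm{dist}(\Re z,\supp\h)>\d_0$.

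In the first case $\Re z\in J$, hence $z\in\C_{J,+}$, and the hypothesis of Theorem \ref{th-reciproque} gives $\nr{\pppg x^{-s}(H_h-z)\inv\pppg x^{-s}}\leq c/h$ directly. It then suffices to write
\[
\pppg x^{-s}\h(\hun)(H_h-z)\inv\h(\hun)\pppg x^{-s}=(\pppg x^{-s}\h(\hun)\pppg x^{s})\,(\pppg x^{-s}(H_h-z)\inv\pppg x^{-s})\,(\pppg x^{s}\h(\hun)\pppg x^{-s})
\]
and to use that $\h(\hun)$ is, modulo $O(h)$, the Weyl quantization of $\h\circ p$ (by \cite{robert} or \cite{helfferr83}), a symbol with bounded derivatives, so that conjugation by $\pppg x^{\pm s}$ produces operators bounded on $L^2(\R^n)$ uniformly in $h$; this gives \eqref{estim-h-res} here.

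The substance is the second case, where $(H_h-z)\inv$ is only $O(\Im z\inv)$ and the spectral cutoffs must compensate. I would bootstrap on $M:=\nr{\h(\hun)(H_h-z)\inv\h(\hun)}$, which is finite since $H_h$ is dissipative, so $(H_h-z)\inv$ is bounded by Proposition \ref{propII.5}. For $\p\in L^2(\R^n)$ put $u=(H_h-z)\inv\h(\hun)\p$. From $H_h=\hun-ihV_2$ one has $(\hun-z)u=\h(\hun)\p+ihV_2u$, and applying $\h(\hun)$, which commutes with $\hun$, then $(\hun-z)\inv$,
\[
\h(\hun)u=G(\hun)\h(\hun)\p+i\,(G(\hun)\sqrt h\sqrt{V_2})\,(\sqrt h\sqrt{V_2}\,u),\qquad G(\l):=\frac{\h(\l)}{\l-z}.
\]
Since $\mathrm{dist}(\Re z,\supp\h)\geq\d_0$ we have $\nr{G(\hun)}\leq\nr{\h}_\infty/\d_0$, so $\nr{G(\hun)\sqrt h\sqrt{V_2}}\leq\sqrt h\,\nr{\h}_\infty\nr{V_2}_\infty^{1/2}/\d_0$; and Proposition \ref{propII.5} with $T=H_h$, $B=\sqrt h\sqrt{V_2}$ (so $B^*B=hV_2=T_I$) and $Q=\h(\hun)$ gives $\nr{\sqrt h\sqrt{V_2}(H_h-z)\inv\h(\hun)}\leq M^{1/2}$. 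Hence $\nr{\h(\hun)u}\leq C(\d_0)(1+\sqrt h\,M^{1/2})\nr\p$; taking the supremum over $\nr\p\leq1$ and using $h\leq1$ gives $M\leq C(\d_0)(1+M^{1/2})$, whence $M\leq C'(\d_0)$ uniformly in $h\in]0,h_0]$ and $\Im z>0$. Since $\nr{\pppg x^{-s}T\pppg x^{-s}}\leq\nr{T}$ and $C'(\d_0)\leq C'(\d_0)/h$, \eqref{estim-h-res} follows in this case too.

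The main (and essentially the only) obstacle is this second case: keeping $\h(\hun)(H_h-z)\inv\h(\hun)$ bounded uniformly up to $\Im z=0$ when $\Re z$ avoids $\supp\h$. The self-improving inequality for $M$ — obtained by combining the elementary identity between the resolvents of $H_h$ and $\hun$ with the quadratic estimate of Proposition \ref{propII.5} — is what makes it work; everything else is routine functional- and pseudodifferential-calculus bookkeeping.
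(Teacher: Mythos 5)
Your proof is correct and follows essentially the same route as the paper: split $\C_+$ into $z$ near $\supp\h$ (where the hypothesis of Theorem \ref{th-reciproque} applies, combined with uniform boundedness of $\pppg x^{\mp s}\h(\hun)\pppg x^{\pm s}$ in $h$) and $z$ away from $\supp\h$ (where $\h(\hun)(\hun-z)\inv$ is uniformly bounded and Proposition \ref{propII.5} controls the dissipative perturbation). The only technical variation is in the far case: the paper expands $(H_h-z)\inv$ by the resolvent identity twice and applies Proposition \ref{propII.5} with $B=Q=\sqrt{hV_2}$, yielding the unconditional bound $\nr{\sqrt{hV_2}(H_h-z)\inv\sqrt{hV_2}}\leq 1$ directly, whereas you take $B=\sqrt{hV_2}$, $Q=\h(\hun)$ and then self-improve the a priori finite quantity $M$; both are valid, the paper's choice just avoiding the need for the bootstrap.
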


\begin{remark}
We have similar estimates for $(H_h^* - \bar z)\inv$.
\end{remark}

%
%
%
%

\begin{proof}
First, we can find $c\geq 0$ such that estimate \eqref{estim-h-res} holds for $z \in \C_{J,+}$ by assumption and uniform boundedness of $\pppg x^{\mp s} \h(\hun) \pppg x^{\pm s}$ with report to $h$ (note that this last statement holds for $s=0$ by functional calculus and for $s=1$, we use the fact that $\h(\hun)$ is a pseudo-differential operator whose symbol has bounded derivatives and $[x,\Opw(b)] = -ih\Opw(\partial_\x b)$; then the claim follows for any $s \in [0,1]$ by complex interpolation).

Then, there exists $\d > 0$ such that for all $z \in \C_{\R\setminus J,+}$ we have $d(z,\supp\h) \geq \d$. As a consequence, the operator $\h(\hun) (\hun - z)\inv$ is bounded uniformly in $z \in \C_{\R\setminus  J,+}$ and $h \in ]0,h_0]$. Hence, using twice the resolvent equation, we can write:
\begin{eqnarray*}
\lefteqn{\nr{\h(\hun) (H_h-z)\inv \h(\hun)}}\\
&& \leq c + h^2 \nr{\h (\hun) (\hun-z)\inv V_2 (H_h-z)\inv V_2 (\hun-z)\inv \h(\hun)}\\
&& \leq c \left (1 + h \nr{\sqrt {h V_2} (H_h-z)\inv \sqrt{h V_2}} \right) \\
&& \leq c 
\end{eqnarray*}
where the last step is given by proposition \ref{propII.5} applied with $T = H_h = \hun -ih V_2$ and $B = Q = \sqrt{hV_2}$.
\end{proof}
%
%
%

%
%
%
%
%
%
%

\begin{proposition} \label{prop-U}
Assume that the assumptions of theoreme \ref{th-reciproque} are satisfied. Then for any $\h \in C_0^\infty(\R)$ supported in J there exists $C_\h \geq 0$ such that for all $\p\in L^2(\R^n)$ and $h\in]0,h_0]$ we have:
\begin{equation} \label{maj-int}
\int_{0}^{+\infty} \nr{\pppg x ^{-s} \h(\hun) U_h(t)\p}^2 \, dt \leq C_\h \nr\p^2
\end{equation}
\end{proposition}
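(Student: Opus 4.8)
The plan is to obtain this uniform (in $h$) Kato smoothing estimate from the resolvent bound \eqref{estim-h-res} of Proposition~\ref{prop-rec-1}, through the standard mechanism linking resolvent estimates to propagation estimates; the one genuinely new difficulty here is that $H_h$ is not self-adjoint, and I would handle it by passing to a self-adjoint dilation. Set $A=\pppg x^{-s}\h(\hun)$, a bounded operator on $L^2(\R^n)$, so that the claim becomes $\int_0^{+\infty}\nr{A\,U_h(t)\p}^2\,dt\leq C_\h\nr\p^2$ with $C_\h$ independent of $h$.

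Since $-\frac ih H_h$ generates the contraction semigroup $(U_h(t))_{t\geq 0}$ (Hille--Yosida, as already used), there are a Hilbert space $\Kc\supset L^2(\R^n)$ and a self-adjoint operator $\mathcal H_h$ on $\Kc$ such that, $P\colon\Kc\to L^2(\R^n)$ denoting the orthogonal projection and $\iota=P^*$ the isometric inclusion, one has $P\,e^{-\frac ih t\mathcal H_h}\iota=U_h(t)$ for all $t\geq0$. Taking Laplace transforms of this identity (both families have norm $\leq1$) gives, for $\Im z>0$,
\[
P\Big(\tfrac1h\mathcal H_h-z\Big)\inv\iota=\Big(\tfrac1h H_h-z\Big)\inv=h\,(H_h-hz)\inv .
\]
For $\p\in L^2(\R^n)$ and $t\geq0$ one has $A\,U_h(t)\p=(AP)\,e^{-\frac ih t\mathcal H_h}\p$, hence, discarding negative times,
\[
\int_0^{+\infty}\nr{A\,U_h(t)\p}^2\,dt\;\leq\;\int_\R\nr{(AP)\,e^{-\frac ih t\mathcal H_h}\p}^2\,dt .
\]
Since $\frac1h\mathcal H_h$ is self-adjoint, Kato's theorem on $H$-smooth operators bounds the right-hand side by $C_0\big(\sup_{\Im z>0}\nr{(AP)(\tfrac1h\mathcal H_h-z)\inv(AP)^*}\big)\nr\p^2$ for a universal constant $C_0$; and by the Laplace transform identity together with $(AP)^*=\iota A^*$,
\[
(AP)\Big(\tfrac1h\mathcal H_h-z\Big)\inv(AP)^*=h\,\pppg x^{-s}\h(\hun)(H_h-hz)\inv\h(\hun)\pppg x^{-s},
\]
whose norm is $\leq h\cdot\frac ch=c$ by \eqref{estim-h-res}, since $\Im(hz)>0$. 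This yields \eqref{maj-int} with $C_\h=C_0 c$, which does not depend on $h$ because $\h$ is fixed and the constant $c$ in \eqref{estim-h-res} depends only on $\supp\h$.

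The step I expect to require the most care is exactly this reduction to the self-adjoint setting --- i.e.\ justifying the passage from the pointwise-in-$z$ bound \eqref{estim-h-res} to a time-integrated estimate. If one prefers not to invoke Kato's theorem as a black box, one can argue by hand: Parseval applied to the Laplace transform of $t\mapsto e^{-\m t}A\,U_h(t)\p$ gives, for $\m>0$,
\[
\int_0^{+\infty}e^{-2\m t}\nr{A\,U_h(t)\p}^2\,dt=\frac h{2\pi}\int_\R\nr{A\,(H_h-(\l+ih\m))\inv\p}^2\,d\l ,
\]
so that, letting $\m\to0^+$ (monotone convergence on the left), it suffices to bound the right-hand side by $C\nr\p^2$ uniformly in $\m$ --- again a smooth-perturbation bound, which the dilation argument supplies from \eqref{estim-h-res}. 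Note that only the upper half-plane $\C_+$ enters, consistently with the fact that we work with the semigroup ($t\geq0$) and with the dissipative convention $\Im\innp{H_h\f}\f\leq0$.
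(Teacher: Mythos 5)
Your proof follows essentially the same route as the paper's: both pass to a selfadjoint dilation of $H_h$, apply Kato's smoothing theorem (Theorem XIII.25 of Reed--Simon IV) to the bounded operator $\pppg x^{-s}\h(\hun)$ viewed on the dilation space, and feed in the resolvent bound of Proposition~\ref{prop-rec-1}; the only difference is presentational --- you make the $h$-scaling explicit by working with $\tfrac1h\mathcal H_h$ and $z\mapsto hz$, whereas the paper keeps $e^{-\frac{it}{h}K_h}$ and simply remarks that the $h$-dependence "has to be checked."
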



\begin{proof}
Let $K_h$ be the selfadjoint dilation of $H_h$ on the Hilbert space $\Kc \supset L^2(\R^n)$ given in appendix \ref{sec-dil}. Let $P$ be the orthogonal projection of $\Kc$ on $L^2(\R^n)$ and $A_h = \pppg x ^{-s} \h(\hun)\in \Lc(\Kc)$, where operators on $L^2(\R^n)$ are extended by 0 on $L^2(\R^n)^\bot \subset \Kc$. Let $\f = (\f_0 , \f_\bot) \in \Kc  = L^2(\R^n) \oplus L^2(\R^n)^\bot$. For $z \in \C_+$ we have:
\begin{eqnarray*}
\lefteqn{\abs { \innp {A_h^* \f}{\left((K_h-z)\inv - (K_h-\bar z)\inv\right) A_h^* \f}_\Kc}}\\
&& = \abs { \innp {\f_0}{\pppg x^{-s} \h(\hun) \left((H_h-z)\inv - (H_h^*-\bar z)\inv\right) \h(\hun) \pppg x^{-s}   \f_0}_{L^2(\R^n)}}\\
&& \leq \frac{2c} h \nr{\f_0}^2_{L^2(\R^n)}  \leq \frac{2c} h \nr{\f}^2_\Kc 
\end{eqnarray*}
where $c$ is given by proposition \ref{prop-rec-1}. The same applies if $\Im z < 0$, so by theorem XIII.25 in \cite{rs4}, where $h$-dependance has to be checked for our semiclassical setting, this proves that $A_h$ is $K_h$-smooth and:
\begin{equation} \label{estim-K}
\sup_{h\in ]0,h_0]} \sup_{\nr\f = 1} \int_{\R} \nr{A_h e^{-\frac {it}h K_h }\f }_{\Lc(\Kc)}^2 \, dt < \infty
\end{equation}
But for $\p \in L^2(\R^n)$ (which we identify with $(\p,0)\in\Kc$), $h \in ]0,h_0]$ and $t\geq 0$ we have:
\[
\nr{\pppg x ^{-s} \h(\hun)   U_h(t)  \p }_{\Lc(L^2(\R^n))} = \nr{\pppg x ^{-s} \h(\hun) P  e^{-\frac {it}h K_h } P \p }_{\Lc(\Kc)} =  \nr{A_h e^{-\frac {it}h K_h } \p}_{\Lc(\Kc)} 
\]
so \eqref{estim-K} gives \eqref{maj-int}.
\end{proof}

\begin{proposition}
Let $T \geq 0$ and $\h \in C_0^\infty$ as in proposition \ref{prop-rec-1}. There exists $h_T >0$ and $C'_\h \geq 0$ such that for any $\p \in L^2(\R^n)$ and $h \in ]0,h_T]$ we have:
\begin{equation}
 \int_0^T \nr{ \pppg x ^{-s} \h(\hun) U_1^h(t)  Q_h(T) \p }^2 \, dt \leq C'_\h \nr \p ^2
\end{equation}
where $Q_h(T) = \Opw (q_1(T))$, $q_1$ being defined in \eqref{def-q1}.
\end{proposition}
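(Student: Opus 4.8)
The plan is to compare the damped propagator $U_h(t)$ with the undamped propagator $U_1^h(t)$ and exploit the fact that the insertion of $Q_h(T)=\Opw(q_1(T))$ exactly accounts for the discrepancy, up to $O(h)$, coming from the dissipative Egorov theorem in the form of Remark~\ref{rem-egorov}. Write $B_h(t) = \Opw(q_1(t))$ and note that $q_1(0)\equiv 1$. The key observation is that the operator family $t\mapsto U_1^h(t)^* \pppg x^{-2s}\h(\hun)^2 U_1^h(t)$ can be related to the $U_h$-conjugation studied before. More precisely, by the mixed Egorov statement in Remark~\ref{rem-egorov},
\[
U_h(t)^* \pppg x^{-2s}\h(\hun)^2 U_1^h(t) = U_1^h(t)^*\Opw\!\big( (\pppg x^{-2s}\h(p)^2)\circ\vf^t \; q_1(t)\big) U_1^h(t) + h\, R_1(t,h),
\]
with $R_1$ bounded uniformly for $t\in[0,T]$, $h\in]0,1]$. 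Since $\pppg x^{-s}$ and $\h(\hun)$ are pseudodifferential with bounded-derivative symbols and commute with $U_1^h$-conjugation modulo lower-order terms, one rewrites the integrand $\nr{\pppg x^{-s}\h(\hun)U_1^h(t)Q_h(T)\p}^2$ in terms of such conjugations applied to $Q_h(T)\p$.

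The core of the argument is then to move the $q_1(T)$ factor through: because $t\in[0,T]$, the cocycle identity for $q_1$ gives, at the symbol level,
\[
(q_1(T)) = (q_1(t)\circ\vf^{\,T-t})\cdot \text{(contribution of }\vf^t\text{ on }[0,T]\text{)},
\]
so that composing $\Opw(q_1(T))$ with the time-$t$ flow produces precisely the weight $q_1$ that appears when one conjugates $\pppg x^{-2s}\h(\hun)^2$ by the damped group $U_h$ rather than the free group $U_1^h$ (cf.\ Theorem~\ref{th-egorov}). The upshot, after collecting the $O(h)$ symbolic errors from each Egorov step, is an estimate of the form
\[
\int_0^T \nr{\pppg x^{-s}\h(\hun)U_1^h(t) Q_h(T)\p}^2\,dt
\;\leq\; c\int_0^T \nr{\pppg x^{-s}\h(\hun)U_h(t)\p}^2\,dt + c\,h^2 T\,\nr\p^2,
\]
valid for $h\leq h_T$ (the threshold ensures the $O(h)$ pseudodifferential remainders, which also involve $\h(\hun)$-localization through $K_\infty$-type cutoffs, are genuinely bounded). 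The first term on the right is then controlled by $C_\h\nr\p^2$ via Proposition~\ref{prop-U}, which gives exactly $\int_0^{+\infty}\nr{\pppg x^{-s}\h(\hun)U_h(t)\p}^2\,dt\leq C_\h\nr\p^2$, and absorbing the $ch^2T$ term (bounded for $h\leq h_T$) yields the claim with $C'_\h = cC_\h + c h_T^2 T$.

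The main obstacle I expect is the bookkeeping in the Egorov comparison: one must verify that conjugating the product $\pppg x^{-s}\h(\hun)$ (two separate pseudodifferential factors, not a single symbol) by $U_1^h(t)$ and then re-expressing everything in terms of $U_h(t)$ really does produce the weight $q_1(T)$ from $Q_h(T)$ with only $O(h)$ errors that are \emph{uniform} for $t\in[0,T]$; this uses that the relevant symbols and all their derivatives are bounded on $p\inv(\supp\h)$, that $Q_h(T)$ is a fixed bounded operator (its symbol $q_1(T)$ is bounded with bounded derivatives, $T$ being fixed), and the explicit remainder representation \eqref{explicit-reste}. A secondary point is the appearance of $h_T$: unlike Proposition~\ref{prop-U}, the constant here degrades because the $O(h)$ Egorov remainder is integrated over the finite window $[0,T]$ and one needs $h$ small (depending on $T$) so that $c h T \leq 1$, say; this is harmless since $T$ is a fixed parameter in the statement.
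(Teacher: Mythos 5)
Your overall strategy is the same as the paper's: compare the free propagator $U_1^h(t)$ with the dissipative one $U_h(t)$ through the Egorov theorem, use the cocycle structure of $q_1$ to trade the factor $Q_h(T)$ for a passage to $U_h(t)$, then invoke Proposition~\ref{prop-U} and absorb remainders for $h \leq h_T$. However, the one formula you actually write down is not correct: Remark~\ref{rem-egorov} asserts
\[
U_h(t)^* \Opw(a)\, U_1^h(t) = \Opw\bigl( (a\circ\vf^t)\, q_1(t) \bigr) + h R(t,h),
\]
with \emph{no} conjugation on the right-hand side, whereas you have written an extraneous $U_1^h(t)^*\,\Opw(\cdot)\,U_1^h(t)$ there. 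Moreover the quantity to be estimated, $\nr{\pppg x^{-s}\h(\hun)U_1^h(t)Q_h(T)\p}^2$, does not naturally produce the mixed conjugation $U_h(t)^*(\cdots)U_1^h(t)$ on which you base the argument — expanding the squared norm yields $U_1^h(t)^*(\cdots)U_1^h(t)$, so the mixed Egorov statement cannot be applied in the way you indicate.

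The paper's bookkeeping is cleaner and worth recording: it applies the dissipative Egorov theorem with $a\equiv 1$ to obtain $U_1^h(-t)U_h(t) = Q_h(t) + hR(t,h)$ uniformly on $[0,T]$, factorizes $Q_h(T) = Q_h(t,T)Q_h(t) + O(h)$ with $q_1(t,T) = e^{-\int_t^T V_2\circ\vf^\tau\,d\tau}$, commutes $Q_h(t,T)$ across $U_1^h(t)$ by the (selfadjoint) Egorov theorem to get another uniformly bounded quantization on the left, and recombines $U_1^h(t)Q_h(t)$ into $U_h(t)+O(h)$. Two secondary quantitative points you miss: the dominant remainder is $O(\sqrt h)$, coming from the sharp pseudodifferential bound $\nr{Q_h(t,T)}\leq C + O(\sqrt h)$, not $O(h^2 T)$; and the $T$-dependence in $h_T$ arises because the Egorov and Calder\'on--Vaillancourt remainder constants themselves depend on the time horizon $T$, not merely because an $O(h)$ error is integrated over a length-$T$ interval. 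None of this changes the conclusion, but the step you label ``bookkeeping'' is where your write-up goes astray, so it deserves to be done carefully.
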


\begin{proof}
According to Egorov theorem  applied with the symbol $a(x,\x) = 1$ we have:
\[
U_1^h(-t) U_h(t) = Q_h(t) + hR(t,h)
\]
where $R$ is bounded in $\Lc(L^2(\R^n))$ uniformly for $h \in ]0,1]$ and $t\in[0,T]$. On the other hand, writing $Q_h(t,T) = \Opw(q_1(t,T))$ with $q_1(t,T) = \left(e^{-\int_t^T V_2 \circ \vf^\t \, d\t}\right)$ for $t \in [0,T]$ we have by theorem 5.1 in \cite{evansz}:
\[
\nr{ Q_h(t,T)} \leq C + \bigo h 0 (\sqrt h) \quad \text{and} \quad Q_h(T) = Q_h(t,T) Q_h(t) + \bigo h 0 (h)
\]
where $C$ does not depend on $t,T$ and $h$, and the sizes of the remainders in $\Lc(L^2(\R^n))$ depend on $T$ but can be estimated uniformly on $t \in [0,T]$. Then if $\nr \p = 1$ we have: 
\begin{eqnarray*}
\lefteqn{\int_{0}^T \nr{\pppg x ^{-s} \h(\hun) U_1^h(t) Q_h(T)\p}^2\,dt}\\
&& \leq \int_0^T \nr{\pppg x^{-s} \h(\hun) U_1^h(t) Q_h(t,T) Q_h(t) \p}^2 \, dt + \bigo h 0 (h)\\
&& \leq \int_0^T \nr{\pppg x^{-s} \h(\hun) Q(2t,T+t)  U_1^h(t)  Q_h(t) \p}^2 \, dt + \bigo h 0 (h)\\
&& \leq \int_0^T \nr{Q(2t,T+t)  \pppg x^{-s} \h(\hun) U_h(t)\p}^2 \, dt + \bigo h 0 (h)\\
&& \leq \left( C + \bigo h 0 (\sqrt h) \right) \int_0^T \nr{\pppg x^{-s} \h(\hun)   U_h(t) \p}^2 \, dt + \bigo h 0 ( h)\\
&& \leq C C_\h + \bigo h 0 (\sqrt h)
\end{eqnarray*}
where $C_\h$ is given by proposition \ref{prop-U}. The remainder is uniformly bounded in $\p$ so we can chose $h_T > 0$ small enough to make it less than 1 and the result follows with $C'_\h = CC_\h + 1$.
\end{proof}

We can now prove theorem \ref{th-reciproque} as in \cite{wang87}:

\begin{proof}[Proof of theorem \ref{th-reciproque}]
Let $A_h$ be  the generator of dilations defined in \eqref{def-ah} and $\h,\f,\p \in C_0^\infty(\R)$ supported in $J$ such that $\h(E) = 1$ and $\h(\l) = \l \f(\l) \p (\l)$ for all $\l \in \R$. We have:
\[
\hun U_1^h(T) = \frac 1 {2T} \left( [A_h,U_1^h(T)] + \int_0^T U_1^h(T-t) W(x)  U_1^h(t) \, dt \right) 
\]
where $W(x) = - 2 V_1(x) - x.\nabla V_1(x)$ and hence there exists $c\geq 0$ such that for all $T \geq 0$ and $h \in ]0, h_T]$ ($h_T > 0$ depends on $T$):
\begin{eqnarray}
\label{est-1surT} \lefteqn{\nr{\pppg x ^{-s}   Q_h(T) \h(\hun) U_1^h(T) Q_h(T)  \pppg x ^{-s}}}\\
\nonumber&\hspace{1cm}& = \nr{\pppg x ^{-s}   Q_h(T) \f(\hun) \hun U_1^h(T) \p(\hun) Q_h(T)  \pppg x ^{-s}}\\
\nonumber && \leq \frac c {T}(1 + \nr {F_h(T)})
\end{eqnarray}
where:
\[
F_h(T) =  \int_0^T  \pppg x ^{-s} Q_h(T)\f(\hun) U_1^h(T-t) W(x) U_1^h(t) \p(\hun)Q_h(T)  \pppg x ^{-s} \,dt
\]
Indeed, we have $\nr{Q_h(T)} \leq C + O(\sqrt h)$, hence for $h \in ]0,h_T]$ with $h_T >0$ small enough we have $\nr{Q_h(T)} \leq 2C$. Furthermore $A_h$ is uniformly $\hun$-bounded, so we have:
\[
\nr{ \pppg x ^{-s}  Q_h(T) \f(\hun) [A_h,U_1^h(T)] \p(\hun) Q_h(T)  \pppg x ^{-s}} \leq c
\]
uniformly in $T\geq 0$ and $h \in ]0,h_T]$.

Let us now chose $\th \in C_0^\infty(\R^n)$ with support in $B(0,2)$ and equal to 1 on $B(0,1)$, and define $W_1(T,x) = W(x) \th(x/T)$, $W_2(T,x) = W(x) - W_1(T,x)$ and $F_j^h(T)$ with the same expression as $F_h(T)$ with $W$ replaced by $W_j$ ($j=1,2$). As $W$ decays like $V_1$ (see \eqref{estimV}), there exists $c\geq 0$ such that for all $T \geq 0$ and $h\in]0,h_T]$ we have $\nr {F_2^h(T)} \leq c T^{1-\rho }$. To estimate $F_1^h$ we compute, for $\nr f _{L^2(\R^n)} = \nr g _{L^2(\R^n)} = 1$:
\[
\begin{aligned}
\abs{\innp{F_1^h(T) f} g}
& \leq \int_0^T \nr{\pppg x^{-s} U_1^h(t) \p(\hun)Q_h(T)  \pppg x ^{-s} f} \nr{ \pppg x ^{2s} W_1(t,x)} \\
& \hspace{2cm} \times  \nr{\pppg x^{-s} U_1^h(T-t) \f(\hun)Q_h(T) \pppg x ^{-s} g}\, dt\\
& \leq c T^{2s-\rho } \int_0^T \nr{\pppg x^{-s} \p(\hun) U_1^h(t) Q_h(T)  \pppg x ^{-s} f}^2\,dt\\
& \hspace{2cm} \times \int_{0}^T \nr{\pppg x^{-s} \f(\hun)  U_1^h(T-t) Q_h(T) \pppg x ^{-s} g}^2\, dt\\
& \leq c\, T^{2s-\rho } 
\end{aligned}
\]
where $c$ is independant of $T \geq 0$ and $h \in ]0,h_T]$. Finally we have:
\begin{equation} \label{estim-fh}
\nr{F_h(T)} \leq c T^{1 - \d}
\end{equation}
with $\d = \min( 1+\rho - 2s,\rho) >0$ and $c \geq 0$ independant of $T \geq 0$ and $h \in ]0,h_T]$.

Let $(z,\z) \in \O_b(E)$ (if $\O_b(E)$ is empty then there is nothing to prove) and $T \geq 0$. Let $W_h(z,\z) = \exp\left(ih^{-\frac 12} (\z.x - z.D) \right)$ (see \cite[§ 3.1] {wang85}) and:
\[
G_h(T) =  W_h(z,\z)^* \big<{h^{\frac 12} x} \big> ^{-s}  R_h(T) \h(P_1^h) V_h(T) R_h(T) \big<{h^{\frac 12} x}\big> ^{-s} V_h(-T)  W_h(z,\z)
\]
where $P_1^h = -h\D + V_1(h^{\frac 12}x)$, $V_h(T) = \exp\left( -\frac {iT}h P_1^h\right)$ and $R_h(T) = q_1(T)^w(h^{\frac12} x , h^{\frac 12} \x)$. These three operators are conjugate to $\hun$, $U_h(T)$ and $Q_h(T)$ by the unitary transformation $f \mapsto \left(x \mapsto h^{\frac n 4} f( h^{\frac 12}x)\right)$, so for $T\geq 0$ and $h \in ]0,h_T]$ we have by \eqref{est-1surT} and \eqref{estim-fh}:
\[
\begin{aligned}
\nr{G_h(T)} 
& = \nr{ \big<h^{\frac 12} x\big>^{-s}  R_h(T) \h(P_1^h) V_h(T) R_h(T)  \big<h^{\frac 12} x\big> ^{-s}}\\
& = \nr{ \pppg x ^{-s}    Q_h(T)  \h(\hun) U_1^h(T) Q_h(T) \pppg x ^{-s}}\\
& \leq c T^{-\d}
\end{aligned}
\]
where $c$ does not depend on $T$ and $h \in ]0,h_T]$. On the other hand, 
using \cite[lemma 3.1]{wang85} and \cite[theorem 4.2]{wang86} we have:
\[
\begin{aligned}
G(T) 
& = \big< { h^{\frac 12} x +z} \big>^{-s} q_1(T)^w(h^{\frac 12}x +z,h^{\frac 12}D + \z) \, (\h\circ p)^w(h^{\frac 12}x +z,h^{\frac 12}D + \z)\\
& \quad \times  W_h(z,\z)^*  V_h(T) q_1(T)^w(h^{\frac 12}x,h^{\frac 12}D)  \big<{h^{\frac 12} x}\big> ^{-s} V_h(-T)  W_h(z,\z) + \bigo h 0 (h)\\
& \limt h 0 \pppg z^{-s}  q_1(T,z,\z) \h(p(z,\z))  q_1(T,\vf^T(z,\z)) \pppg {\bar x (T,z,\z)} ^{-s}
\end{aligned}
\]
This proves:
\[
\pppg z ^{-s} q_1(T,z,\z) q_1(T,\vf^T(z,\z)) \pppg {\bar x (T,z,\z)} ^{-s} \leq c T^{-\d}
\]
where $c$ does not depend on $T$, but $\bar x(T,z,\z)$ stays in a bounded subset of $\R^n$, so we must have:
\[
q_1(T,z,\z) q_1(T,\vf^T(z,\z)) \limt T {+\infty} 0
\]
which, by definition of $q_1$, cannot be true unless the classical trajectory starting from $(z,\z)$ goes through $\Oc$ (see \eqref{def-q1}).
\end{proof}

%
%
%

%
%
%
%

\section{Uniform resolvent estimates in Besov spaces}

In order to obtain in Besov spaces the resolvent estimates we proved in weighted spaces, we need another resolvent estimate (see proposition \ref{autre-estim}). We begin with a lemma which turns properties on $\gzle = (H_h-i\e P_h \Th_h^V P_h -z)\inv$ (see section 2) into properties on $(H_h -i\e \Th_h^V -z)\inv$:

\begin{lemma}
With assumptions and notations of theorem \ref{th-mourre}, for all $h,\e \in ]0,1]$ and $z\in\C_{J,+}$ the operator $(H_h-i\e \Th_h^V -z)$ has a bounded inverse (denoted by $\kzle$) which satisfies the following estimates:
\begin{eqnarray}
\label{estim-g1-a} \nr {\kzle} +\nr {\hol \kzle} &\leq& \frac c {\a_h \e}\\
\label{estim-g1-b} \nr{\kzle \alinv} +\nr{\hol \kzle \alinv} &\leq& \frac c {\a_h \sqrt \e}\\
\label{estim-g1-c} \nr{\sqrt{V_h} \kzle} & \leq & \frac c {\sqrt{\a_h} \sqrt\e}\\
\label{estim-g1-d} \nr{\sqrt{V_h} \kzle \alinv
} &\leq& c
\end{eqnarray}
where $c$ is independant of $\e,h\in]0,1]$ and $z \in \C_{I,+}$ for some closed subinterval $I$ of $J$.
\end{lemma}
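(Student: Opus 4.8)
The plan is to obtain $\kzle=(H_h-i\e\Th_h^V-z)\inv$ and its four bounds by parallelling, for the full $\e$-term $\e\Th_h^V$, the argument carried out for the cut-off term $\e P_h\Th_h^V P_h$ in the proof of Theorem~\ref{th-mourre}; the link between the two operators is the second resolvent identity, the discrepancy being controllable because it is localised away from the energy window $I$. Note that $H_h-i\e\Th_h^V$ is not obviously dissipative — its anti-adjoint part $(1+\e C_V)V_h+\e\Th_{R,h}$ is only known to be nonnegative on $\Ran\1{J}(\hol)$, through the Mourre estimate \eqref{estim-mourre} — so Proposition~\ref{propII.5} does not apply to it directly; but $\gzle=(H_h-i\e P_h\Th_h^V P_h-z)\inv$ does exist, by \eqref{minor-mourre}. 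Write $P_h=\vf(\hol)$, $P'_h=(1-\vf)(\hol)$ and
\[
W:=\Th_h^V-P_h\Th_h^V P_h=P'_h\Th_h^V+P_h\Th_h^V P'_h ,
\]
so that $\kzle\inv-\gzle\inv=-i\e W$ and $H_h-i\e\Th_h^V-z=(H_h-i\e P_h\Th_h^V P_h-z)(\Id-i\e\,\gzle W)$; hence it suffices to invert $\Id-i\e\,\gzle W$ with a bound uniform in $h,\e\in]0,1]$ and $z\in\C_{I,+}$. Every term of $W$ carries the off-window projector $P'_h$, and since $1-\vf$ vanishes on a neighbourhood of $I$ the operators $P'_h(\hol-z)\inv$, $(\hol-z)\inv P'_h$ and $(1+\sqrt{V_h})P'_h(\hol-z)\inv(1+\sqrt{V_h})$ are bounded uniformly in $h$ and $z\in\C_{I,+}$ (the last is the bound used just after \eqref{calcul-pp}); substituting $\gzle=(\hol-z)\inv(\Id+iV_h\gzle+i\e P_h\Th_h^V P_h\gzle)$ to bring a free resolvent against each such $P'_h$, and then using the $\hol$-boundedness of $\Th_h^V$ (assumption~(c), \eqref{rel-bound}) together with \eqref{pb-bounded}, \eqref{estim-pgr}, \eqref{primegzerau} and \eqref{estim-sqrtV} exactly as the four lines of \eqref{calcul-pp} were handled, one finds $\Id-i\e\,\gzle W$ invertible with uniformly bounded inverse (for $\e$ small the perturbation is itself small thanks to the $\sqrt\e$ factors; the borderline terms are absorbed on the left as in passing from \eqref{estim-gzlerle} to \eqref{estimfze}). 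This gives $\kzle=(\Id-i\e\,\gzle W)\inv\gzle\in\Lc(\Hc)$ for all $h,\e\in]0,1]$ and $z\in\C_{J,+}$.

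\textbf{The four estimates.} These I would prove by rerunning steps~1--2 of the proof of Theorem~\ref{th-mourre} with $\kzle$ in place of $\gzle$ and the plain weight $\alinv=\pppg{A_h}\inv$ in place of $\rle=\pppg{A_h}^{-s}\pppg{\e A_h}^{s-1}$: decompose through $P_h$ and $P'_h$; control $P_h\kzle\alinv$ by Proposition~\ref{propII.5} with $B=\sqrt{\a_h\e}\,P_h$ and \eqref{minor-mourre}; control $P'_h\kzle\alinv$ (and $\sqrt{V_h}P'_h\kzle\alinv$) by the analogue of \eqref{calcul-pp}, where the $P'_h$-part of the now-full $-i\e\Th_h^V$ is again paired with a free resolvent; and control $\sqrt{V_h}\kzle$ and $\sqrt{V_h}\kzle\alinv$ by Proposition~\ref{propII.5} with $B=\sqrt{V_h}$. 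The square-root structure of Proposition~\ref{propII.5} is what produces the extra factor $\sqrt\e$ in \eqref{estim-g1-b}--\eqref{estim-g1-d} relative to \eqref{estim-g1-a}; the $\hol$-weighted parts follow from \eqref{rel-bound} just as in \eqref{estim-H2}; and, where convenient, the $\rle$-weighted bounds on $\gzle$ already at hand may be imported via the operator inequalities $\alinv\le\rle\le\e^{s-1}\alinv$, valid for $\tfrac12<s\le1$ and $\e\le1$ since $\e\pppg{A_h}\le\pppg{\e A_h}\le\pppg{A_h}$.

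\textbf{Main obstacle.} The one genuine difficulty, here as throughout Section~2, is to keep every estimate uniform in $z\in\C_{I,+}$ all the way down to $\Im z=0$: in plain operator norm $\gzle$ and $\kzle$ are only $O(1/\Im z)$, so one must never estimate $\gzle W$, or the $P'_h$-part of the resolvent, crudely, but must first commute $P'_h$ next to a free resolvent — this is where $1-\vf$ vanishing near $I$ is used — exactly as in the treatment of $P'_h\gzle\rle$ in \eqref{calcul-pp}. With that bookkeeping in place, the rest is a routine repetition of the estimates of Section~2.
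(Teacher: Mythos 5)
There is a genuine gap in the second paragraph (``The four estimates''), and you in fact flag the obstruction yourself in the first paragraph and then proceed as if it were not there. You correctly note that $H_h-i\e\Th_h^V$ is \emph{not} dissipative: written as $T_R-iT_I$ one has $T_R=\hol-\e\Th_{I,h}$ and $T_I=(1+\e C_V)V_h+\e\Th_{R,h}$, and $T_I\ge 0$ is only known after sandwiching by $\1_J(\hol)$ (the Mourre estimate); on the rest of the space $\Th_{R,h}$ has no sign. This is precisely why Proposition~\ref{propII.5} cannot be applied to $\kzle$. Yet in deriving the bounds you propose to ``rerun steps 1--2 of the proof of Theorem~\ref{th-mourre} with $\kzle$ in place of $\gzle$'' and, concretely, to ``control $P_h\kzle\alinv$ by Proposition~\ref{propII.5} with $B=\sqrt{\a_h\e}\,P_h$'' and ``control $\sqrt{V_h}\kzle$ by Proposition~\ref{propII.5} with $B=\sqrt{V_h}$''. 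Both applications require $B^*B\le T_I$ \emph{and} $T_I\ge 0$ globally, and neither holds for $H_h-i\e\Th_h^V$. So the plan as stated does not produce \eqref{estim-g1-a}--\eqref{estim-g1-d}; it proves them only for $\gzle$, which is exactly what the paper already has.

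The paper's route avoids this pitfall: Proposition~\ref{propII.5} is only ever applied to $H_h-i\e P_h\Th_h^V P_h-z$, giving the analogues of \eqref{estim1}--\eqref{estim3} for $\gzle$ with $\rle$ replaced by $P_h,P_h'$; then $\kzle$ is built in two steps via explicit Jensen--Mourre--Perry resolvent formulas, first
$\gzlue=\gzlde+i\e\,\gzlde P'_h\bigl(1-i\e\,\Th_h^V P_h\gzlde P'_h\bigr)^{-1}\Th_h^V P_h\gzlde$
and then
$\kzle=\gzlue+i\e\,\gzlue\Th_h^V\bigl(1-i\e\,P'_h\gzlue\Th_h^V\bigr)^{-1}P'_h\gzlue$,
and the four estimates are read off from these formulas and the $\gzle$-estimates, never from a direct application of Proposition~\ref{propII.5} to $\gzlue$ or $\kzle$. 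The two-step decomposition $P_h\Th_h^V P_h\to\Th_h^V P_h\to\Th_h^V$ is also what makes the Neumann inversion manageable: at each step the ``small'' operator is a bounded object (through $\Th_h^V P_h$, $\gzlde P'_h$, $P'_h\gzlue$) with the explicit $\sqrt{\a_h}$ or $\sqrt{\e}$ gain, whereas your one-step perturbation $W=P'_h\Th_h^V+P_h\Th_h^V P'_h$ leaves $\Th_h^V$ unsandwiched (e.g.\ the term $\gzle P'_h\Th_h^V$ has the unbounded factor on the outside), so the smallness and even the meaning of $\Id-i\e\,\gzle W$ as a boundedly invertible operator needs the kind of careful bookkeeping the paper packages into the two explicit formulas. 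To repair your argument you should derive \eqref{estim-g1-a}--\eqref{estim-g1-d} from the resolvent-identity representation of $\kzle$ in terms of $\gzle$ (where propII.5 is legitimate) rather than from propII.5 applied to $\kzle$.
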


\begin{proof}
We keep all the notations of the proof of theorem \ref{th-mourre}, in particular $P_h = \vf(\hol)$, $P'_h = 1 - P_h$, $\gzle = (H_h - i\e P_h \Th_h^V P_h)\inv$,\dots \, Applying proposition \ref{propII.5} with $B = \sqrt {\a_h} \sqrt \e P_h$ and $Q = P_h$ gives:
\[
\nr{P_h \gzle P_h} \leq \frac 1 { {\a_h} \e} 
\]
Calculations \eqref{calcul-pp}-\eqref{primegzerau} with $\rle$ replaced by $P_h$ and $P'_h$ show:
\begin{equation} \label{estim1}
\nr{P'_h \gzle P_h} \leq \frac c {\sqrt {\a_h} \sqrt \e} , \quad \nr{P'_h \gzle P'_h} \leq c
\end{equation}
We also have $\nr{P_h \gzle P'_h} \leq \frac c {\sqrt {\a_h} \sqrt \e}$ and hence:
\begin{equation} \label{estim2}
\nr \gzle  + \nr{\hol \gzle}  \leq \frac c {\a_h \e}
\end{equation}
Now three applications of proposition \ref{propII.5} with $B = \sqrt{V_h}$ give:
\begin{equation} \label{estim3}
\nr{\sqrt {V_h} \gzle \pppg {A_h} \inv} + \nr{\sqrt {V_h} \gzle P'_h} \leq c , \quad  \nr{\sqrt {V_h} \gzle} \leq \frac c  {\sqrt {\a_h} \sqrt \e}
\end{equation}
Then, as in \cite{jensenmp84}, we prove that:
\[
\gzlue = \gzlde + i\e \gzlde P'_h (1 -i \e \Th_h^V P_h \gzlde P'_h ) \inv  \Th_h^V P_h \gzlde
\]
is well-defined for $\e$ small enough and is a bounded inverse of $(H_h - i\e \Th_h^V P_h)$ which satisfies estimates \eqref{estim1}-\eqref{estim3} as $\gzle$. Then it remains to define:
\[
\kzle = \gzlue +i \e \gzlue \Th_h^V (1 -i \e{P'_h \gzlue  \Th_h^V})\inv P'_h \gzlue
\]
for $\e$ small enough and check the conclusions of the lemma.
\end{proof}

\begin{proposition} \label{autre-estim}
Let $s > 1$ and $I$ a closed subinterval of $J$. Then there exists $c \geq 0$ such that for all $z \in \C_{I,+}$ and $h \in ]0,1]$:
\begin{equation}
\nr{\1 {\R_-} (A_h) (H_h -z) \inv \alinvs} \leq \frac c {\a_h}
\end{equation}
\end{proposition}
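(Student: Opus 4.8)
The plan is to exploit the one-sided spectral cut-off $\1{\R_-}(A_h)$ together with the dissipativity of $H_h$, mimicking the idea behind Proposition~\ref{propII.5} but now using a weight of the form $\pppg{A_h}^{-s}$ with $s>1$, which is \emph{integrable} along the unitary group $e^{itA_h}$. Concretely, I would start from the lemma above, which provides the inverse $\kzle=(H_h-i\e\Th_h^V-z)\inv$ with the estimates \eqref{estim-g1-a}--\eqref{estim-g1-d}, and try to pass to the limit $\e\to 0^+$ after conjugating by $e^{itA_h}$ and integrating in $t$ over $\R_-$ (or $\R_+$). The point of the $\1{\R_-}(A_h)$ projection is that on $\Ran\1{\R_-}(A_h)$ one has good decay of $e^{itA_h}\pppg{A_h}^{-s}$ as $t\to+\infty$ (say), so the integral $\int_0^{+\infty}\| \cdots \|\,dt$ converges; this is the standard trick (as in \cite[lemma 8.2]{perryss81} and in the Besov-space arguments of Jensen--Mourre--Perry \cite{jensenmp84}) for turning a resolvent estimate in $\pppg{A_h}^{-s}$-weighted norm into a one-sided $L^1$-in-$t$ statement, and then back.

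In more detail, I would write, for $z\in\C_{I,+}$,
\[
\1{\R_-}(A_h)(H_h-z)\inv\alinvs
\]
and use the intertwining identity coming from differentiating $e^{itA_h}(H_h-z)\inv e^{-itA_h}$: since $i[H_h,A_h]^0=\Th_h$ is controlled by assumption (c) (of size $\sqrt{\a_h}$ times $\pppg{\cdot}_{\G_h}$) and by the Mourre estimate (e) is, modulo $C_V V_h$ and the localization $\1 J(\hol)$, bounded below by $\a_h$, one gets a differential inequality for $t\mapsto \|e^{itA_h}(H_h-z)\inv\psi\|$. Integrating this against $\1{\R_-}(A_h)$, where the resolvent $(A_h-w)\inv$ of the \emph{self-adjoint} generator has a sign-definite imaginary part, should yield the claimed $O(1/\a_h)$ bound. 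The role of $s>1$ rather than $s>1/2$ is exactly that $\int \pppg{t}^{-s}\,dt<\infty$; with $s>1/2$ one only gets the weaker (square-integrable) statement of Theorem~\ref{th-mourre}, whereas here we want the genuine boundedness of $\1{\R_-}(A_h)(H_h-z)\inv\alinvs$.

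The cleanest route is probably the following three steps. First, reduce to the localized operator: insert $P_h=\vf(\hol)$ and $P'_h=1-P_h$ and dispose of the $P'_h$ part using \eqref{rel-bound} and functional calculus (it contributes an $h$- and $\e$-uniform bounded term, as in \eqref{primegzerau}), so that only $\1{\R_-}(A_h)P_h(H_h-z)\inv P_h\alinvs$ matters. Second, for the $\e$-regularized operator $\kzle$ from the lemma, establish the bound $\|\1{\R_-}(A_h)\kzle\alinvs\|\le c/\a_h$ uniformly in $\e,h,z$, by the commutator/integration argument sketched above combined with estimates \eqref{estim-g1-a}--\eqref{estim-g1-d}; the key algebraic input is that $\1{\R_-}(A_h)\,i\e\Th_h^V\,\kzle$ can be absorbed because the $\e$-perturbation has the ``right'' position relative to the Mourre lower bound. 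Third, let $\e\to0^+$: by \eqref{ablim} and the already-proved existence of the limiting resolvent in $\Lc(\Hc)$ when sandwiched by $\alinvs$, together with weak-$*$ compactness of the bounded family $\1{\R_-}(A_h)\kzle\alinvs$, the bound passes to the limit.

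The main obstacle I anticipate is the commutator bookkeeping in the integration-by-parts step: one must show that conjugating $(H_h-z)\inv$ by $e^{itA_h}$ and integrating over $\R_-$ actually produces $\1{\R_-}(A_h)$ acting on the left (this needs the identity $\1{\R_-}(A_h)=\frac{1}{2\pi i}\int \bigl((A_h-\lambda-i0)\inv-(A_h-\lambda+i0)\inv\bigr)\,d\lambda$ over $\lambda<0$, or an equivalent Laplace-transform representation), and that the error terms generated by the non-commutativity of $H_h$ and $A_h$ — which involve $\Th_h$, $[\Th_h,A_h]$ and $[V_h,A_h]$ — are controlled by assumptions (c) and (d) \emph{with an $\e$-uniform bound}. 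This is where one genuinely uses that $s>1$ (so that $\e^{s-1}$ and $\e^{s-3/2}$-type factors coming from $\rle$-style weights are integrable against $dt$), and where the dissipativity $V_h\ge0$ is essential to kill the ``wrong-sign'' boundary term at $t=0$. Once those error terms are tamed, assembling the final estimate is routine.
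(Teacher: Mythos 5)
Your plan diverges from the paper's proof, and the central mechanism you sketch is both vaguer and, in one place, based on a misdiagnosis.

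The paper follows Jensen--Mourre--Perry \cite[Thm.~2.3]{jensenmp84} and never integrates in real time along $e^{itA_h}$. It sets
\[
\tfzle = \1{\R_-}(A_h)\,e^{\e A_h}\,\kzle\,\alinvs ,
\]
observing that $e^{\e A_h}$ is a \emph{contraction} on $\Ran\1{\R_-}(A_h)$ for $\e>0$, so that \eqref{estim-g1-b} gives $\nr{\tfzle}\le c/(\a_h\sqrt{\e})$. One then computes $\frac{d}{d\e}\tfzle$ (the $e^{\e A_h}$-factor brings down an $A_h$, which is commuted through $\kzle$ and combined with $\frac{d}{d\e}\kzle = i\kzle\Th_h^V\kzle$) and estimates the three resulting terms. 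The first term is bounded by complex interpolation between $\nr{\1{\R_-}(A_h)e^{\e A_h}\kzle\alinvs}$ and $\nr{\1{\R_-}(A_h)e^{\e A_h}\kzle}$, producing the factor $\a_h^{-1/s}\e^{-1/s}\nr\tfzle^{1-1/s}$; the other two are bounded by $c/(\a_h\sqrt\e)$ using \eqref{estim-g1-a}--\eqref{estim-g1-d} and assumption~(d). This yields the differential inequality
\[
\nr{\tfrac{d}{d\e}\a_h\tfzle}\le c\bigl(\e^{-1/s}\nr{\a_h\tfzle}^{1-1/s} + \e^{-1/2}\bigr),
\]
and the hypothesis $s>1$ enters precisely because $\int_0^1\e^{-1/s}\,d\e<\infty$ iff $1/s<1$; combined with the a priori bound at $\e>0$, this gives boundedness as $\e\to0$. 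Your explanation that $s>1$ is needed so that $\int\pppg{t}^{-s}\,dt<\infty$ in a real-time propagation integral is a different (and, for this argument, incorrect) rationale; so is your mention of $\e^{s-1}$, $\e^{s-3/2}$ ``$\rle$-style weights'', which do not appear in this proof.

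More substantively, the concrete mechanism you propose — conjugate $(H_h-z)\inv$ by $e^{itA_h}$, differentiate in $t$, and integrate over $\R_-$ using a limiting-absorption representation of $\1{\R_-}(A_h)$ — is only gestured at. The step where the Mourre lower bound is supposed to convert into the $O(1/\a_h)$ estimate after that time-integration is not demonstrated, and it is not clear how the ``boundary term at $t=0$'' you worry about is supposed to arise or be killed by $V_h\ge0$. There is no such boundary term in the paper's argument; the dissipativity is used only through the $\sqrt{V_h}$-estimates \eqref{estim-g1-c}--\eqref{estim-g1-d} and assumption~(d). Your first step (disposing of the $P'_h$ part inside the resolvent) is also not what is needed; the relevant reduction is the one done in the preceding lemma, namely replacing $\gzle=(H_h-i\e P_h\Th_h^V P_h-z)\inv$ by $\kzle=(H_h-i\e\Th_h^V-z)\inv$, so that $A_h$ can be commuted through without extra $P_h$-localization errors. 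Your endgame (uniform bound on $\1{\R_-}(A_h)\kzle\alinvs$ followed by $\e\to 0$) matches the paper, but the core estimate is missing and the stated reason for requiring $s>1$ is wrong; you should re-do the argument with the damping factor $e^{\e A_h}$ and a differential inequality in $\e$, as in \cite{jensenmp84}.
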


\begin{proof}
We follow the proof of theorem 2.3 in \cite{jensenmp84}. Let 
\[
\tfzle = \1 {\R_-}(A_h) \exp(\e A_h) \kzle \alinvs 
\]
By \eqref{estim-g1-b} we already know that: 
\begin{equation} \label{estim-tfzle}
\nr\tfzle \leq \frac c {\a_h \sqrt \e}
\end{equation}
Then we compute in the sense of quadratic forms on $\Dom_H \cap \Dom _A$:
\[
\begin{aligned}
\frac d {d\e} \tfzle 
& =  \1 {\R_-}(A_h) e^{\e A_h} A_h \kzle \alinvs \\
& \qquad +i \1 {\R_-}(A_h) e^{\e A_h} \kzle (C_V V_h + i[H_h,A_h]) \kzle \alinvs \\
& = \1 {\R_-}(A_h) e^{\e A_h} \kzle A_h \alinvs\\
& \qquad + iC_V \1 {\R_-}(A_h) e^{\e A_h} \kzle V_h \kzle \alinvs \\ 
& \qquad - i \e \1 {\R_-}(A_h) e^{\e A_h} \kzle [\Th_h^V,A] \kzle \alinvs
\end{aligned}
\]
We use complex interpolation to estimate the first term:
\begin{eqnarray*}
\lefteqn{\nr{\1 {\R_-}(A_h) e^{\e A_h} \kzle \pppg {A_h}^{1-s}}}\\
&& \leq \nr{\1 {\R_-}(A_h) e^{\e A_h} \kzle  \alinvs} ^{1 - \frac 1 s} \nr{\1 {\R_-}(A_h) e^{\e A_h} \kzle}^{\frac 1s}\\
&& \leq   c \, { \a _h ^{-\frac 1s} \, \e ^{-\frac 1 {s}}}\nr \tfzle ^{1 - \frac 1s} 
\end{eqnarray*}
For the second term we write:
\[
\begin{aligned}
\nr{\h_-(A_h) e^{\e A_h} \kzle V_h \kzle \alinvs}
& \leq \nr{ \kzle \sqrt{V_h}} \nr{\sqrt{V_h} \kzle \alinvs}\\
& \leq \frac c {\a_h \sqrt\e} 
\end{aligned}
\]
and finally, by assumption (d) and \eqref{estim-g1-a}-\eqref{estim-g1-b}:
\[
\begin{aligned}
\e \nr {\kzle [\Th_h^V,A] \kzle \alinvs}
& \leq c\, \e \a_h \nr {\kzle   }_{\G_h}  \nr{ \kzle \alinvs}_{\G_h}\\
& \leq \frac c {\a_h \sqrt \e }
\end{aligned}
\]
This gives:
\[
\nr{ \frac d {d\e} \a_h \tfzle} \leq  c  \left(\e ^{- \frac 1 s} \nr {\a_h \tfzle} ^{1 - \frac 1s} + \e ^{-\frac 12} \right)
\]
which, together with \eqref{estim-tfzle}, gives the result.
\end{proof}

Let $\O_0 = ]-1,1[$ and $\O_j = \singl{\l \in \R \tqe 2^{j-1}\leq \abs \l < 2^j}$ for $j\in\N^*$. For a selfadjoint operator $F$ on $\Hc$ and $s \geq 0$, the abstract Besov space $B_s(F)$ is defined by:
\[
B_s(F) = \singl{u \in \Hc \tqe \nr u _{B_s(F)}< \infty} 
\]
where:
\[
\nr u _{B_s(F)} = \sum_{j\in\N} 2^{js} \nr{ \1 {\O_j}(F) u}_\Hc
\]
The norm of its dual space $B_s^*(F)$ with respect to the scalar product on $\Hc$ is:
\[
\nr v _{B_s^*} = \sup_{j\in\N} 2^{-js} \nr {\1 {\O_j}(F) v}_\Hc
\]
When $F$ is the multiplication by $x$ on $L^2(\R^n)$ we recover the usual Besov spaces $B_s$ and $B_s^*$ and the norm we have just defined for $B_s^*$ is equivalent to the usual one:
\[
\sup_{R \geq 1} R ^{-s} \left( \int _{\abs x < R} \abs {v(x)}^2\, dx \right)^{\frac 12}
\]

\begin{theorem} \label{th-besov}
Let $(H_h)$ be an abstract family of dissipative operators as in section 2, $(A_h)$ a conjugate family for $(H_h)$ on $J$ with bounds $(\a_h)$ as in definition \ref{def-conj} and $s \geq \frac 12$. Then for all closed subinterval $I$ of $J$ there exists $c \geq 0$ such that for any $z \in \C_{I,h}$ and $h \in ]0,1]$ we have:
\[
\nr{(H_h-z)\inv } _{B_s(A_h) \to B_s^* (A_h)} \leq \frac c {\a_h}
\]
\end{theorem}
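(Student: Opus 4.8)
The plan is, as is standard for Besov-space resolvent estimates (Agmon--H\"ormander theory), to reduce the $B_s\to B_s^*$ bound to dyadic estimates on the spectral blocks of $A_h$, and then to obtain those by revisiting the proofs of Theorem \ref{th-mourre} and Proposition \ref{autre-estim}. The range $s>\frac12$ is not the substantial part: one has $\nr u_{\pppg{A_h}^{-s}\Hc}\leq C_s\nr u_{B_s(A_h)}$ and $\nr v_{B_s^*(A_h)}\leq C_s\nr v_{\pppg{A_h}^{s}\Hc}$ with $C_s$ independent of $h$ (the dyadic decomposition being purely spectral), so Theorem \ref{th-mourre} already gives the claim in that range. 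Thus the whole content is the endpoint $s=\frac12$. Write $P_j^h=\1{\O_j}(A_h)$, so that $\sum_{j\in\N}P_j^h=\Id$ and $\pppg\l\leq c\,2^{j}$ on $\O_j$; one checks at once that for $s=\frac12$ the assertion is \emph{equivalent} to the family of block estimates
\begin{equation} \label{blocks}
\nr{P_j^h\,(H_h-z)\inv\,P_k^h}\leq \frac c{\a_h}\,2^{\frac{j+k}{2}},\qquad j,k\in\N,
\end{equation}
uniformly in $h\in]0,1]$ and $z\in\C_{I,+}$: if \eqref{blocks} holds and $u\in B_{1/2}(A_h)$ then $\nr{P_j^h(H_h-z)\inv u}\leq\sum_k\nr{P_j^h(H_h-z)\inv P_k^h}\,\nr{P_k^h u}\leq\frac c{\a_h}2^{j/2}\nr u_{B_{1/2}(A_h)}$, which is exactly the $B_{1/2}^*(A_h)$ bound; the converse follows by testing on vectors spectrally supported in $\O_k$.

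To prove \eqref{blocks} I would go back to the proof of Theorem \ref{th-mourre}. The obstruction to taking $s=\frac12$ there was the non-integrable factor $\e^{s-\frac32}$ in \eqref{estim-derivee}; but for \eqref{blocks} one does not integrate the differential inequality down to $\e=0$, since the projection $P_j^h$ itself acts as a cut-off of $A_h$ at scale $2^{j}$. Concretely, fix a block $(j,k)$, set $M=\max(j,k)$ and $\e=2^{-M}$, split $P_j^h$ and $P_k^h$ into their positive and negative spectral parts, and in the definition of $\gzle$ and $\fzle$ replace the weights $\rle$ by $e^{\mp\e A_h}P_j^h$ on the left and $P_k^h e^{\pm\e A_h}$ on the right, the signs chosen component by component so that these factors are bounded by $e^{\e 2^{M}}=O(1)$ --- the one-sided device already used in the proof of Proposition \ref{autre-estim}. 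Rerunning steps 1--4 of the proof of Theorem \ref{th-mourre} with these weights, and using the ODE lemma (lemma 3.3 in \cite{jensenmp84}) to remove the logarithmic loss, gives $\nr{P_j^h(H_h-z)\inv P_k^h}\leq c\,\a_h\inv 2^{M}$, which is already \eqref{blocks} on the diagonal and on all blocks with $|j-k|$ bounded.

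It then remains to replace $2^{\max(j,k)}$ by $2^{(j+k)/2}$ on the far off-diagonal blocks, where the decay of $(H_h-z)\inv$ in the conjugate variable enters. When the left projection is supported in $\R_-$, Proposition \ref{autre-estim} (rerun with $\pppg{A_h}^{-s}$ replaced by the dyadic weight $2^{ks}P_k^h$) bounds the block independently of the left index; when the right projection is supported in $\R_+$, the adjoint statement $\nr{\pppg{A_h}^{-s}(H_h-z)\inv\1{\R_+}(A_h)}\leq c/\a_h$ ($s>1$) --- obtained by applying Proposition \ref{autre-estim} to the dissipative operator $-H_h^*=-\hol-iV_h$ with conjugate family $-A_h$ on $-J$ --- does the same independently of the right index; combined with the rough bound above this yields the geometric gain. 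The \emph{main obstacle} is the remaining corner $\1{\R_+}(A_h)(H_h-z)\inv\1{\R_-}(A_h)$, the ``bulk'' of the resolvent, where $A_h$ has the unfavourable sign on both sides, so $e^{\e A_h}$ is bounded on neither factor and no one-sided estimate applies; there one is forced to run the two-exponential version $e^{\e A_h}(\cdots)e^{\e A_h}$ of the argument and to control the cross terms by the higher commutator bounds of Definition \ref{def-conj}(d), exactly as $D_1,\dots,D_4$ were controlled in step 3 of the proof of Theorem \ref{th-mourre}. All constants stay uniform in $h$ because those earlier proofs already carry the dependence on $\a_h$ explicitly.
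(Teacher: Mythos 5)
Your reduction of the $B_s\to B_s^*$ statement to the dyadic block estimates $\nr{P_j^h\,(H_h-z)\inv P_k^h}\leq c\,\a_h^{-1}2^{s(j+k)}$ is algebraically correct, and your observation that for $s>\frac12$ the theorem is already a direct consequence of Theorem~\ref{th-mourre} (via $\pppg{A_h}^s\,\cdot\,\leq c_s\nr\cdot_{B_s(A_h)}$ and its dual, with $c_s$ uniform in $h$) is right and worth keeping. The paper, however, does not argue through block estimates at all: it simply observes that, once Theorem~\ref{th-mourre} and Proposition~\ref{autre-estim} are available, the argument given by Wang for the selfadjoint case carries over unchanged, and it cites that proof rather than reproducing it. So your route is genuinely different, and it is precisely in the part you would have to supply yourself that the gaps appear.

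Two concrete problems. First, the combination of the bounds you extract does not actually give $2^{(j+k)/2}$ uniformly. For $j\leq k$ you propose to use either the ``rough'' diagonal bound $\nr{P_jRP_k}\leq c\,\a_h^{-1}2^{\max(j,k)}=c\,\a_h^{-1}2^{k}$, or a one-sided bound $\nr{P_jR\,\1{\R_+}(A_h)P_k}\leq c\,\a_h^{-1}2^{js}$ from the adjoint of Proposition~\ref{autre-estim}, which requires a fixed $s=1+\e>1$ (the constant in that proposition is not uniform as $s\downarrow1$, since its differential inequality produces $\e^{-1/s}$). For $j<k$ one always has $2^{k}>2^{(j+k)/2}$, and $2^{j(1+\e)}\leq 2^{(j+k)/2}$ only when $k\geq j(1+2\e)$. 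In the intermediate range $j\leq k<j(1+2\e)$, whose width grows linearly in $j$, neither bound is $\leq C\,2^{(j+k)/2}$, so the ``geometric gain'' does not materialize there. Second, you explicitly flag $\1{\R_+}(A_h)(H_h-z)\inv\1{\R_-}(A_h)$ as the main obstacle and propose to ``run the two-exponential version $e^{\e A_h}(\cdots)e^{\e A_h}$ and control the cross terms as $D_1,\dots,D_4$.'' As written this is not a proof: with $s=\frac12$ the derivative estimate \eqref{estim-derivee} contains the non-integrable $\e^{-1}$, and replacing the weight $\rle$ by $e^{\mp\e A_h}P_j^h$ makes $\nr{A_h\rho}$ and $\nr{d\rho/d\e}$ of order $\e^{-1}$, which is \emph{worse} than the $\e^{-1/2}$ one gets from $\rle$ at $s=\frac12$; lemma~3.3 of \cite{jensenmp84} does not apply to such a right-hand side, and ``fixing $\e=2^{-M}$'' does not remove the regularization $i\e P_h\Th_h^V P_h$, which must still be sent to zero to recover $(H_h-z)\inv$. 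What you have is the right pair of ingredients (the $s>\frac12$ weighted estimate and the one-sided $s>1$ estimate) but not yet a proof that they deliver the $s=\frac12$ block bounds; the Agmon--H\"ormander-type argument that actually combines them is precisely the content of the cited result of Wang, and reconstructing it is what is missing here.
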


Now that we have theorem \ref{th-mourre} and proposition \ref{autre-estim}, we can follow word by word the proof of the analog theorem for selfadjoint operators (see theorem 2.2 in \cite{wang07}). Applied to our dissipative Schrödinger $H_h = -h^2\D + V_1(x) -i\nu(h) V_2(x)$, this gives:

\begin{theorem}
Let $E > 0$ and $s \geq \frac 12$. If all bounded trajectories of energy $E$ meet $\Oc$, then there exists $\e,h_0 >0$ and $c \geq 0$ such that with $J = [E-\e,E + \e]$ we have for all $z \in \C_{J,+}$ and $h \in ]0,h_0]$:
\[
\nr{(H_h-z)\inv}_{B_s \to B_s^*} \leq \frac c {h \tilde \nu(h)}
\]
\end{theorem}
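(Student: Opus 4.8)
The plan is to specialize the abstract Besov-space estimate of Theorem~\ref{th-besov} to the dissipative Schr\"odinger operator, using as conjugate family the quantization of an escape function constructed in Section~\ref{sec-appl-schr}, and then to transfer from the abstract Besov spaces attached to that conjugate operator to the usual spaces $B_s$, $B_s^*$ built from multiplication by $x$, with all constants uniform in $h$ --- just as the weighted $\pppg{\tilde \nu(h)F_h}^{-s}$ estimates were converted into $\pppg x^{-s}$ estimates in the proof of Theorem~\ref{th-schrodinger}. One may moreover take $s=\frac12$, since for $s>\frac12$ the conclusion follows from the continuous inclusions $B_s\subset B_{1/2}$ and $B_{1/2}^*\subset B_s^*$.

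First I would invoke the two propositions of Section~\ref{sec-appl-schr}: as every bounded trajectory of energy $E$ meets $\Oc$, there are $\e_0>0$, $c_0>0$ and $r\in C_0^\infty(\R^{2n},\R)$ such that the selfadjoint family $\tilde \nu(h)F_h=\tilde \nu(h)(A_h+\Opw(r))$ is conjugate to $H_h$ on the open interval $]E-\e_0,E+\e_0[$, in the sense of Definition~\ref{def-conj}, with bounds $\a_h=c_0\,h\,\tilde \nu(h)$. Fixing $\e\in]0,\e_0[$ and $J=[E-\e,E+\e]$, Theorem~\ref{th-besov} then provides $c\geq0$ and $h_0>0$ (the latter being the threshold below which the G\aa rding argument of Section~\ref{sec-appl-schr} yields the Mourre estimate) such that
\[
\nr{(H_h-z)\inv}_{B_{1/2}(\tilde \nu(h)F_h)\to B_{1/2}^*(\tilde \nu(h)F_h)}\leq\frac c{\a_h}=\frac c{c_0\,h\,\tilde \nu(h)}
\]
for all $h\in]0,h_0]$ and $z\in\C_{J,+}$.

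Next I would transfer this to the concrete Besov spaces in two steps, following \cite[theorem~2.2]{wang07}. In the first step one checks that $B_{1/2}(\tilde \nu(h)F_h)$ and $B_{1/2}(\tilde \nu(h)A_h)$ carry equivalent norms uniformly in $h$, and likewise the dual spaces; this rests on the uniform-in-$h$ boundedness of $\pppg{\tilde \nu(h)F_h}^{1/2}\pppg{\tilde \nu(h)A_h}^{-1/2}$ and of its counterpart with $F_h$ and $A_h$ interchanged, already observed in Section~\ref{sec-appl-schr} by complex interpolation between $s=0$ and $s=1$, and it replaces $\tilde \nu(h)F_h$ by $\tilde \nu(h)A_h$ in the estimate above. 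In the second step I would use the resolvent identity
\[
(H_h-z)\inv=(H_h-i)\inv-(z-i)(H_h-i)^{-2}+(z-i)^2(H_h-i)\inv(H_h-z)\inv(H_h-i)\inv
\]
already exploited for \eqref{devhi}, together with the Besov-space analogue of \cite[lemma~8.2]{perryss81}: the operators $(H_h-i)\inv$ and $(H_h^*-i)\inv$ map $B_{1/2}$ boundedly into $B_{1/2}(\tilde \nu(h)A_h)$ uniformly in $h$ (equivalently, by duality, $B_{1/2}^*(\tilde \nu(h)A_h)$ into $B_{1/2}^*$), which one obtains from the fact that these resolvents are semiclassical pseudodifferential operators with symbols in a good class and that such operators respect the dyadic localizations $\1{\O_j}(\tilde \nu(h)A_h)$ uniformly in $h$. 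The first two terms on the right are then estimated directly and the last by the product of its three factors, the middle one being controlled by the first step; this gives $\nr{(H_h-z)\inv}_{B_{1/2}\to B_{1/2}^*}\leq c/(h\,\tilde \nu(h))$ on $\C_{J,+}$ for $h\in]0,h_0]$, which is the assertion.

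The main obstacle is exactly this transfer machinery: because the norms of $B_s(F)$, $B_s^*(F)$ are built from the spectral projections $\1{\O_j}(F)$ rather than from powers of $\pppg F$, one must establish, \emph{with constants independent of $h$}, the equivalence of $B_{1/2}(\tilde \nu(h)F_h)$ with $B_{1/2}(\tilde \nu(h)A_h)$ and the boundedness of $(H_h\mp i)\inv$ from $B_{1/2}$ into $B_{1/2}(\tilde \nu(h)A_h)$. If the abstract Besov spaces are stable under the relevant bounded perturbations of the conjugate operator, this is immediate; otherwise I would fall back on the concrete description $\nr v_{B_{1/2}^*}\sim\sup_{R\geq1}R^{-1/2}\bigl(\int_{\abs x<R}\abs v^2\,dx\bigr)^{1/2}$ and its analogue for the dilation generator, reducing the matter to the weighted estimates already proved in Theorem~\ref{th-schrodinger}. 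In short, the one thing requiring real care is checking that every constant in the selfadjoint argument of \cite{wang07} can be taken uniform in the semiclassical parameter $h$; granting that, the theorem follows.
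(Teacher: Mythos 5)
Your overall plan matches the paper's: invoke the conjugate family $(\tilde\nu(h) F_h)$ from Section~\ref{sec-appl-schr}, apply the abstract Theorem~\ref{th-besov}, use the resolvent identity \eqref{devhi}, and then transfer from the abstract Besov spaces to $B_s$, $B_s^*$. The paper, however, handles the transfer step differently, and precisely at the point you yourself flag as the obstacle. It does not pass through $B_s(\tilde\nu(h)A_h)$ and needs no comparison between the Besov spaces attached to $\tilde\nu(h)F_h$ and to $\tilde\nu(h)A_h$. Instead it proves directly, and this is estimate \eqref{estim-hi-besov}, that $\nr{(H_h-i)\inv}_{B_s\to B_s(F_h)}\leq c$ uniformly in $h$, by an elementary dyadic device attributed to H\"ormander: orthogonality of the blocks $\1_{\O_j}(F_h)u$ gives, for every $k\in\N$,
\[
\nr{u}_{B_s(F_h)}\leq c_s\,2^{ks}\nr{u}+c_s\,2^{-ks}\nr{\pppg{F_h}^{2s}u},
\]
and one then sums over the dyadic decomposition $\f=\sum_k\1_{\O_k}(x)\f$, choosing $k$ block by block, using only the already available weighted-space fact that $\pppg{F_h}^{2s}(H_h-i)\inv\pppg{x}^{-2s}$ is bounded on $L^2$ uniformly in $h$. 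Combined with the (uniform, $h$-independent) inclusion $B_s(F_h)\hookrightarrow B_s(\tilde\nu(h)F_h)$ and its dual, this is the whole transfer.

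By contrast, your two transfer steps rest on assertions that are not established by what you cite, and are in fact the crux of the matter. First, uniform boundedness of $\pppg{\tilde\nu(h)F_h}^{s}\pppg{\tilde\nu(h)A_h}^{-s}$ and of its ``inverse'', obtained by interpolation between $s=0$ and $s=1$, gives equivalence of the \emph{weighted} norms $\nr{\pppg{\tilde\nu(h)F_h}^s\,\cdot\,}$ and $\nr{\pppg{\tilde\nu(h)A_h}^s\,\cdot\,}$, but not of the Besov norms $B_s(\tilde\nu(h)F_h)$ and $B_s(\tilde\nu(h)A_h)$: these are $\ell^1$-type norms built from the spectral projections $\1_{\O_j}$ of two distinct, noncommuting selfadjoint operators, and a bounded ratio of powers does not let one compare dyadic blocks. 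Second, the claim that $(H_h\mp i)\inv$ maps $B_{1/2}$ into $B_{1/2}(\tilde\nu(h)A_h)$ ``because these are pseudodifferential operators that respect the dyadic localizations $\1_{\O_j}(\tilde\nu(h)A_h)$ uniformly in $h$'' is exactly what must be proved; membership in a good symbol class does not by itself yield a uniform Besov-block bound. You do recognize the difficulty and suggest a fall-back to the concrete characterization of $B_s^*$, but that fall-back is not carried out. The H\"ormander inequality above is precisely the missing device that converts the weighted estimate you do have into the Besov estimate you need. (The initial reduction to $s=\frac12$ via $B_s\subset B_{1/2}$, $B_{1/2}^*\subset B_s^*$ is valid, though the paper simply handles general $s\geq\frac12$ directly.)
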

\noindent
(we recall that $\tilde \nu(h) = \min(1 , \nu(h)/h)$).

\begin{proof}
We already have a conjugate family $(\tilde \nu (h) F_h)$ for $(H_h)$. So we only have to apply the abstract theorem \ref{th-besov}, \eqref{devhi}, and the estimate:
\begin{equation}\label{estim-hi-besov}
\nr{(H_h -i)\inv}_{B_s \to B_s(F_h)} \leq c
\end{equation}
with a similar estimate for dual spaces. To prove \eqref{estim-hi-besov}, we use the idea given in \cite[14.1]{hormander2}. For any $u \in B_s(F)$ and $k \in\N$, since the $\1 {\O_j}(F) u$ for $j\in\N$ are pairwise orthogonal we have:
\begin{eqnarray*}
\lefteqn{\nr u _{B_s(F_h)}}\\
&& = \sum_{0\leq j \leq k} 2^{js} \nr{\1{\O_j} (F_h) u} + \sum_{j > k} 2^{-js} \nr{2^{2js}\1{\O_j} (F_h) u} \\
&& \leq \left(\sum_{j\leq k} 2^{2js}\right)^{\frac  12} \left( \sum_{j\leq k} \nr{\1 {\O_j}(F_h)u}^2 \right)^{\frac 12} +\left(\sum_{j > k} 2^{-2js}\right)^{\frac  12} \left( \sum_{j > k} \nr{2^{2js} \1 {\O_j}(F_h)u}^2 \right)^{\frac 12}\\
&& \leq c_s 2^{ks} \nr u + c_s 2^{-ks} \nr {\pppg {F_h} ^{2s} u}
\end{eqnarray*}
and hence, for $\f \in B_s$, using the fact that the operator $\pppg {F_h}^{2s} (H_h-i)\inv \pppg x ^{-2s}$ is bounded in $\Lc(L^2(\R^n))$ uniformly in $h$ we have:
\[
\begin{aligned}
\nr{(H_h-i)\inv \f}_{B_s(F_h)} 
& \leq \sum_{k\in\N} \nr{(H_h -i)\inv \1 {\O_k} (x)\f}_{B_s(F_h)} \\
& \leq c_s \sum_{k\in\N} 2^{ks} \nr{(H_h-i)\inv \1{\O_k}(x) \f} + c_s \sum_{k\in\N} 2^{-ks} \nr{\pppg X^{2s} \1 {\O_k}(x) \f} \\
& \leq c_s \sum_{k\in\N} 2^{ks} \nr{\1{\O_k}(x) \f} + c_s \sum_{k\in\N} 2^{-ks}  \nr{\pppg X^{2s} \1 {\O_k}(x) \f} \\
&\leq c_s \nr \f _{B_s} + c_s \sum_{k\in\N} 2^{ks} \nr{\1{\O_k}(x) \f}\\
&\leq c_s \nr \f _{B_s}
\end{aligned}
\]
\end{proof}

\appendix

\section{Unitary dilations and dissipative Schrödinger operators} \label{sec-dil}

In order to use the selfadjoint theory to study dissipative operators, we have mostly used the assumption that $H$ is a perturbation of its selfadjoint part $H_1$. However, by the theory of unitary dilations, there are other selfadjoint operators we can use:

\begin{definition} \label{def-dil}
Let $T$ be a bounded operator of the Hilbert space $\Hc$. A bounded operator $U$ on a Hilbert space $\Kc$ is said to be a dilation of $T$ if $\Hc \subset \Kc$ and for all $\f,\p \in \Hc$ and $n\in\N$ we have:
\[
\innp{U^n \f}\p _ \Kc = \innp{T^n \f} \p _\Hc
\]
\end{definition}

The theory of unitary dilations for a contraction is developped in the book of B.S.-Nagy and C. Foias (\cite{nagyf}). In particular, we know that every contraction has a unitary dilation. This also holds for semigroups of contractions: if $(T(t))_{t \geq 0}$ is a semigroup of contractions on $\Hc$, there exists a unitary group $(U(t))_{t\in\R}$ on $\Kc \supset \Hc$ such that $U(t)$ is a dilation of $T(t)$ for all $t\geq 0$. Then, if $H$ is a dissipative operator on $\Hc$, there is a unitary group of dilations $(U(t))$ on $\Kc\supset \Hc$ for the semigroup $(e^{- {it} H})$ generated by $H$. The unitary group $(U(t))$ is generated by a selfadjoint operator $K$ on $\Kc$, the properties of which we can use to study the dissipative operator $H$. Note that $K$ is usually said to be a selfadjoint dilation of $H$ but is not a dilation of $H$ in the sense of definition \ref{def-dil}.

Much is said on the abstract theory in \cite{nagyf}, but there is an explicit study of the dissipative Schrödinger operator case in \cite{pavlov77}. In particular an exemple of dilation is given. Here we recall this example in the semiclassical setting:
\begin{proposition}
Let $-h^2\D + V_1 -ihV_2$ be a dissipative Schrödinger operator on $L^2(\R^n)$ as in section \ref{sec-appl-schr}, $W_h = \sqrt {2hV_2}$, $\O = \supp V_2$, $\Kc = L^2(\R_- ,L^2( \O)) \oplus L^2(\R^n) \oplus L^2(\R_+ ,L^2( \O)) $ and $P$ the orthogonal projection of $\Kc$ on $L^2(\R^n)$. Then the operator:
\[
K_h : \f = (\f_-,\f_0,\f_+)  \mapsto \left(-i\f_-', \hun \f_0 - \frac {W_h}2 (\f_-(0) + \f_+(0)), -i\f_+'\right)
\]
with domain:
\[
\Dom(K_h) = \singl{(\f_-,\f_0,\f_+) \tqe \f_\pm \in H^1(\R_\pm,L^2(\O))\text{ and } \f_+(0) - \f_-(0) = iW_h \f_0} \subset \Kc
\]
(where $H^1$ is the Sobolev space of $L^2$-functions with first derivative in $L^2$) is a selfadjoint operator which satisfies:
\begin{eqnarray*}
\forall z \in \C_+ ,&& \restr{P(K_h-z)\inv}{L^2(\R^n)} = (H_h-z)\inv \\ 
\forall z \in \C_+ ,&& \restr{P(K_h-\bar z)\inv}{L^2(\R^n)} = (H_h^*-\bar z)\inv \\ 
\forall t \geq 0 ,&& \restr{Pe^{-\frac {it}h K_h}}{L^2(\R^n)} = e^{-\frac {it}h H_h} \\ 
\forall t \leq 0 ,&& \restr{Pe^{-\frac {it}h K_h}}{L^2(\R^n)} = e^{-\frac {it}h H_h^*} \\ 
\end{eqnarray*}

\end{proposition}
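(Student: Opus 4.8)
The plan is to reproduce Pavlov's explicit dilation \cite{pavlov77} in the present $h$-dependent normalization, obtaining the four identities as by-products of an explicit computation of $(K_h-z)\inv$ that simultaneously yields selfadjointness. First I would check that $K_h$ is symmetric: for $\f=(\f_-,\f_0,\f_+)$ and $\p=(\p_-,\p_0,\p_+)$ in $\Dom(K_h)$, integrating by parts in the half-line components leaves only boundary contributions at $0$,
\[
\innp{K_h\f}{\p}_{\Kc}-\innp{\f}{K_h\p}_{\Kc}=-i\innp{\f_-(0)}{\p_-(0)}+i\innp{\f_+(0)}{\p_+(0)}-\tfrac12\innp{\f_-(0)+\f_+(0)}{W_h\p_0}+\tfrac12\innp{W_h\f_0}{\p_-(0)+\p_+(0)},
\]
the middle terms $\innp{\hun\f_0}{\p_0}$ cancelling since $\hun$ is selfadjoint; substituting $W_h\f_0=-i(\f_+(0)-\f_-(0))$ and $W_h\p_0=-i(\p_+(0)-\p_-(0))$ from the boundary condition in $\Dom(K_h)$, a short expansion shows the right-hand side vanishes.

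Next I would compute the resolvent. Fix $z$ with $\Im z>0$ and $\p=(\p_-,\p_0,\p_+)\in\Kc$, and solve $(K_h-z)\f=\p$. On $\R_-$ the transport equation $-i\f_-'-z\f_-=\p_-$ has, since $\Im z>0$ makes the homogeneous solution $e^{izt}$ blow up at $-\infty$, a \emph{unique} solution $\f_-\in H^1(\R_-,L^2(\O))$ given by an absolutely convergent integral of $\p_-$, with $\f_-(0)$ a bounded functional of $\p_-$ and $\f_-(0)=0$ when $\p_-=0$. On $\R_+$ the homogeneous solution decays at $+\infty$, so the $L^2$ solutions form an affine line parametrized by the free boundary value $\f_+(0)$. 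The boundary relation forces $\f_+(0)=\f_-(0)+iW_h\f_0$, and inserting this into the middle equation $(\hun-z)\f_0-\tfrac12 W_h(\f_-(0)+\f_+(0))=\p_0$ gives, using $W_h^2=2hV_2$ and hence $\hun-\tfrac i2 W_h^2=H_h$,
\[
(H_h-z)\f_0=\p_0+W_h\f_-(0),
\]
which is uniquely solvable because $H_h-z$ is boundedly invertible on $\C_+$ by Proposition \ref{propII.5}. Thus $\Ran(K_h-z)=\Kc$ for $z\in\C_+$; running the same argument for $\Im z<0$ — where the roles of $\R_-$ and $\R_+$ are exchanged, so $\f_+(0)$ becomes determined by $\p_+$ and the reduced middle operator is $\hun+\tfrac i2 W_h^2=H_h^*$, invertible off the real axis on $\{\Im z<0\}$ since $H_h^*$ is co-dissipative — gives $\Ran(K_h-z)=\Kc$ there too. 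Together with symmetry this proves $K_h$ is selfadjoint.

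The first two identities now fall out of this construction: with $\p=(0,\p_0,0)$ one has $\f_-(0)=0$ when $\Im z>0$, so $\f_0=(H_h-z)\inv\p_0$, and since $P\f=\f_0$ this is exactly $\restr{P(K_h-z)\inv}{L^2(\R^n)}=(H_h-z)\inv$; the same with $\f_+(0)=0$ gives $\restr{P(K_h-\bar z)\inv}{L^2(\R^n)}=(H_h^*-\bar z)\inv$. For the semigroup identities I would use that for $\Im z>0$ one has both Laplace representations $(H_h-z)\inv=\frac ih\int_0^{+\infty}e^{\frac{it}h z}e^{-\frac{it}h H_h}\,dt$ (valid because $H_h$ generates a contraction semigroup by Hille--Yosida, \cite{engel2}) and $(K_h-z)\inv=\frac ih\int_0^{+\infty}e^{\frac{it}h z}e^{-\frac{it}h K_h}\,dt$ (valid since $K_h$ is selfadjoint); projecting the second and invoking uniqueness of the Laplace transform in $t$ gives $\restr{Pe^{-\frac{it}h K_h}}{L^2(\R^n)}=e^{-\frac{it}h H_h}$ for $t\geq0$, and the case $t\leq0$ is identical, starting from the $\{\Im z<0\}$ resolvent identity and using $\nr{e^{\frac{it}h H_h^*}}\leq1$ for $t\geq0$.

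The main obstacle is the half-line analysis combined with the identification of the reduced middle-component operator: one must select, according to the sign of $\Im z$, the \emph{unique} $L^2$ branch on one half-line while keeping the free boundary value on the other, and then recognize that, after using the asymmetric boundary relation $\f_+(0)-\f_-(0)=iW_h\f_0$, the equation left for $\f_0$ is governed exactly by $H_h$ when $\Im z>0$ and by $H_h^*$ when $\Im z<0$. This is precisely what makes the dilation select $(H_h-z)\inv$ on the upper half-plane and $(H_h^*-z)\inv$ on the lower one, and it is where $W_h=\sqrt{2hV_2}$ and the form of the boundary condition are essential. The remaining points — convergence of the integrals defining $\f_\pm$, $H^1$-regularity, and boundedness of $W_h$ between $L^2(\R^n)$ and $L^2(\O)$ — are routine and uniform in $h$.
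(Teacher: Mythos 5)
Your proof is correct and follows the same approach as the paper's sketch: verify symmetry by integration by parts and the boundary condition, compute the resolvent explicitly by solving the half-line transport equations component by component (the paper simply displays the resulting formulas for $(K-z)^{-1}$), and then recover the semigroup identities from the resolvent via Laplace transform. The only minor variant is that you obtain selfadjointness from symmetry together with surjectivity of $K_h \pm i$, reusing the resolvent computation, whereas the paper frames it as checking $\Dom(K_h^*) \subset \Dom(K_h)$ directly — these are equivalent, and your route is slightly more economical.
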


\begin{proof}
The proof of the proposition is straightforward calculations. We first have to check that $K$ is symmetric, that $\Dom(K^*) \subset \Dom(K)$ and then that for $z \in \C_+$ we have $(\p_-,\p_0,\p_+) = (K-z)\inv (\f_-,\f_0,\f_+)$ where:
\begin{eqnarray*}
\p_-(r) &=& i\int_{-\infty}^r e^{iz(r-s)} \f_-(s)\, ds\\
\p_0 &=& (H_h-z)\inv (\f_0 + W_h \p_-(0))\\
\p_+(r) &=& (\p_-(0) + iW_h\p_0) e^{izr} + i \int_0^r e^{iz(r-s)} \f_+(s)\,ds
\end{eqnarray*}
and an analog for $(K-\bar z)\inv$. To prove the last statement, we show that the generator of the semigroup $t\mapsto \restr{Pe^{-\frac {it}h K_h}}{L^2(\R^n)}$ must be $H$ using the result on the resolvent. Details are given in \cite{pavlov77}.
\end{proof}

\section*{Acknowledgments}
I am very grateful to Xue-Ping Wang for many helpful discussions and stimulating questions about the subject.

\bibliographystyle{amsalpha}
\bibliography{bibliothese}

\providecommand{\bysame}{\leavevmode\hbox to3em{\hrulefill}\thinspace}
\providecommand{\MR}{\relax\ifhmode\unskip\space\fi MR }
\providecommand{\MRhref}[2]{%
  \href{http://www.ams.org/mathscinet-getitem?mr=#1}{#2}
}
\providecommand{\href}[2]{#2}
\begin{thebibliography}{BCKP02}

\bibitem[AK07]{aloui-khenissi-07}
L.~Aloui and M.~Khenissi, \emph{{Stabilization of {S}chr\"odinger equation in
  exterior domains.}}, ESAIM, Control Optim. Calc. Var. \textbf{13} (2007),
  no.~3, 570--579.

\bibitem[BCKP02]{benamou-al-02}
J.-D. Benamou, F.~Castella, T.~Katsaounis, and B.~Perthame, \emph{{High
  frequency limit of the {H}elmholtz equations.}}, Rev. Mat. Iberoam.
  \textbf{18} (2002), no.~1, 187--209.

\bibitem[BLSS03]{benamou-al-03}
J.-D. Benamou, O.~Lafitte, R.~Sentis, and I.~Solliec, \emph{{A geometrical
  optics-based numerical method for high frequency electromagnetic fields
  computations near fold caustics. I.}}, J. Comput. Appl. Math. \textbf{156}
  (2003), no.~1, 93--125.

\bibitem[DJ01]{derezinski-jaksic-01}
J.~Derezi\'nski and V.~Jak\v{s}i\'c, \emph{{Spectral theory of Pauli-Fierz
  operators.}}, J. Funct. Anal. \textbf{180} (2001), no.~2, 243--327.

\bibitem[EN06]{engel2}
K.J. Engel and R.~Nagel, \emph{A short course on operator semigroups},
  Springer, 2006.

\bibitem[EZ]{evansz}
L.C. Evans and M.~Zworski, \emph{Lectures on semiclassical analysis},
  \url{http://math.berkeley.edu/~zworski/semiclassical.pdf}.

\bibitem[GGM04]{georgescugm04}
V.~Georgescu, C.~Gérard, and J.S. Moller, \emph{Commutators, {$C_0$}-semigroups
  and resolvent estimates}, Jour. Func. Ana. \textbf{216} (2004), 303--361.

\bibitem[GM88]{gerardm88}
C.~Gérard and A.~Martinez, \emph{Principe d'absorption limite pour des
  opérateurs de {S}chrödinger à longue portée}, C.R. Acad. Sciences
  \textbf{306} (1988), 121--123.

\bibitem[HR83]{helfferr83}
B.~Helffer and D.~Robert, \emph{Calcul fonctionnel par la transformation de
  melin et opérateurs admissibles}, Jour. Func. Ana. \textbf{53} (1983),
  246--268.

\bibitem[Hör84]{hormander2}
L.~Hörmander, \emph{The analysis of linear partial differential operators,
  {II}}, Grundlehren der mathematischen Wissenschaften, vol. 257, Springer,
  1984.

\bibitem[Jen85]{jensen85}
A.~Jensen, \emph{Propagation estimates for {S}chrödinger-type operators},
  Trans. A.M.S. \textbf{291} (1985), no.~1, 129--144.

\bibitem[JMP84]{jensenmp84}
A.~Jensen, E.~Mourre, and P.~Perry, \emph{Multiple commutator estimates and
  resolvent smoothness in quantum scattering theory}, Ann. Inst. H. Poincaré
  \textbf{41} (1984), no.~2, 207--225.

\bibitem[Mar02]{martinez}
A.~Martinez, \emph{An introduction to semiclassical and microlocal analysis},
  Universitext, Springer, 2002.

\bibitem[Mou81]{mourre81}
E.~Mourre, \emph{Absence of singular continuous spectrum for certain
  self-adjoint operators}, Comm. Math. Phys. \textbf{78} (1981), 391--408.

\bibitem[NF67]{nagyf}
S.~Nagy and B.~Foias, \emph{Analyse harmonique des opérateurs de l'espace de
  {H}ilbert}, Masson, 1967.

\bibitem[Pav77]{pavlov77}
B.S. Pavlov, \emph{Selfadjoint dilation of the dissipative {S}chrödinger
  operator and its resolution in terms of eigenfunctions}, Math. USSR Sbornik
  \textbf{31} (1977), no.~4, 457--478.

\bibitem[PSS81]{perryss81}
P.~Perry, I.M. Sigal, and B.~Simon, \emph{Spectral analysis of ${N}$-body
  {S}chrödinger operators}, Annals of Math. \textbf{114} (1981), 519--567.

\bibitem[Rob87]{robert}
D.~Robert, \emph{Autour de l'appoximation semi-classique}, Progress in
  Mathematics, Birkhäuser, 1987.

\bibitem[RS79]{rs4}
M.~Reed and B.~Simon, \emph{Method of modern mathematical physics}, vol. IV,
  Analysis of Operator, Academic Press, 1979.

\bibitem[RT87]{robertt87}
D.~Robert and H.~Tamura, \emph{Semi-classical estimates for resolvents and
  asymptotics for total scattering cross-sections}, Annales de l'I.H.P.,
  section A \textbf{46} (1987), no.~4, 415--442.

\bibitem[Wan85]{wang85}
X.P. Wang, \emph{\'{E}tude semi-classique d'observables quantiques}, Annales de
  la faculté des sciences de Toulouse \textbf{7} (1985), no.~2, 101--135.

\bibitem[Wan86]{wang86}
\bysame, \emph{Approximation semi-classique de l'équation de {H}eisenberg},
  Comm. Math. Phys. \textbf{104} (1986), 77--86.

\bibitem[Wan87]{wang87}
\bysame, \emph{Time-decay of scattering solutions and classical trajectories},
  Annales de l'I.H.P., section A \textbf{47} (1987), no.~1, 25--37.

\bibitem[Wan07]{wang07}
\bysame, \emph{Microlocal estimates of the stationnary {S}chrödinger equation
  in semi-classical limit}, Séminaires et Congrès, SMF \textbf{15} (2007),
  265--308.

\bibitem[WZ06]{wangz06}
X.P. Wang and P.~Zhang, \emph{High-frequency limit of the {H}elmholtz equation
  with variable refraction index}, Jour. of Func. Ana. \textbf{230} (2006),
  116--168.

\end{thebibliography}

\end{document}